\newcommand{\stkout}[1]{\ifmmode\text{\sout{\ensuremath{#1}}}\else\sout{#1}\fi}
\newtheorem{proposition}{Proposition}
\newtheorem{lemma}{Lemma}
\newtheorem{remark}{Remark}
\newtheorem{example}{Example}
\newtheorem{definition}{Definition}
\newtheorem{theorem}{Theorem}
\DeclareMathOperator*{\argmin}{arg~min}
\newtheorem{assumption}{Assumption}
\newcommand*\circled[1]{\tikz[baseline=(char.base)]{
		\node[shape=circle,draw,inner sep=0pt,fill=white, minimum size=4mm] (char) {#1};}}
\newcommand*\squared[1]{\tikz[baseline=(char.base)]{
		\node[shape=rectangle,draw,inner sep=0pt, minimum size=4mm] (char) {#1};}}
\title{Term-sparse polynomial optimization for the design of frame structures  }
\author[1]{Marouan Handa}
\author[1,2]{Marek Tyburec}
\author[1,3]{Michal Ko\v{c}vara}
\affil[1]{Institute of Information Theory and Automation, 18208 Prague, Czech Republic}
\affil[2]{Faculty of Civil Engineering, Czech Technical University in Prague, Thakurova 7, 16629 Prague 6}
\affil[3]{School of Mathematics, University of Birmingham, Birmingham B15 2TT, UK}
\date{}
\begin{document}

\maketitle

\begin{abstract}
This work investigates an efficient solution to two fundamental problems in topology optimization of frame structures. The first one involves minimizing structural compliance under linear-elastic equilibrium and weight constraint, while the second one minimizes the weight under compliance constraints. These problems are non-convex and generally challenging to solve globally, with the non-convexity concentrated in a polynomial matrix inequality. In \cite{tyburec2021global,tyburec2022global}, the authors tackled the problems using the moment-sum-of-squares hierarchy (mSOS), but were only able to solve smaller instances globally. Here, we aim to improve the scalability of solution to these problems by using the mSOS hierarchy supplemented with the Term Sparsity Pattern technique (TSP), which was introduced by Magron and Wang \cite{magron2023sparse}. Due to the unique polynomial structure of our problems in which the objective and constraint functions are separable polynomials, we further improve scalability by adopting a reduced monomial basis containing non-mixed terms only. From extensive numerical testing, we conclude that these techniques allow for a global solution to two times larger instances when compared to \cite{tyburec2021global,tyburec2022global}, and accelerate the solution of the problems from \cite{tyburec2021global,tyburec2022global} significantly.

\noindent \textbf{Keywords} Frame structure optimization $\cdot$ Polynomial optimization $\cdot$ Term sparsity\\ pattern $\cdot$ Nonmixed basis $\cdot$ Global optimality

\noindent \textbf{Mathematics Subject Classification (2020)} 74P05 $\cdot$ 90C23 $\cdot$ 90C22 $\cdot$ 05C69
\end{abstract}

\section{Introduction}\label{sec:intro}
Topology optimization of frame structures is a classical problem in structural design. The nature of the problem lies between the discrete setting of truss topology optimization and the polynomial nature of stiffness interpolations in continuum topology optimization \cite{bendsoe2003topology}. It consists of finding the continuous cross-section areas that minimize compliance or weight for a given finite element discretization and boundary conditions, under the linear-elastic equilibrium and other physical constraints. In this context, the optimization problem can be formulated as a minimization of a linear function over a basic semi-algebraic feasible set, which is described using a polynomial matrix inequality. This makes the resulting optimization problems non-convex and, therefore, challenging to solve globally \cite{yamada2016relaxation,kanno2016mixed,toragay2022exact}. In the literature, mainly local solvers \cite{fredricson2003topology,chan1995automatic} and meta-heuristic methods \cite{an2017topology} were used, providing local solutions or solutions that cannot be certified as approximately global. 

Few works have dealt with the problem globally. Toragay \textit{et al.}~\cite{toragay2022exact} formulated the weight minimization problem as a mixed integer quadratically-constrained program and solved it by branch-and-bound-type method. In their work, they considered degree-two polynomials only, but included constraints to prevent virtually intersecting frame elements, semi-continuous constraints for the cross-section areas and additional cut constraints to reduce the feasible set. In \cite{tyburec2021global,tyburec2022global}, the compliance and weight minimization problems were formulated as polynomial optimization problems (POP) and solved by Lasserre's moment-sum-of-square (mSOS) hierarchy \cite{lasserre2001global,henrion2006convergent}. 

mSOS is a powerful tool for solving POPs by converting them to a hierarchy of increasing-sized semidefinite programming (SDP) relaxations of a given POP. The hierarchy is indexed by an integer called the relaxation order. With increasing relaxation order, the hierarchy provides a monotonic sequence of lower bounds and converges to the global minimum in the limit. Convergence occurs generically in a finite number of steps \cite{Nie2013}, and global optimality can be recognized from the satisfaction of the rank flatness condition \cite{curto1996solution}. A linear procedure is then used to extract all the minimizers (see \cite{henrion2005detecting}). In contrast to \cite{toragay2022exact}, the formulations in \cite{tyburec2021global,tyburec2022global} naturally accommodate higher-degree polynomials and do not rely on enumerative principles. In addition, they provide simple sufficient conditions for approximate global optimality, which can be used as a complement to the rank condition in the standard mSOS method.

 Nevertheless, using the mSOS hierarchy can rapidly become challenging when the number of variables in the POP is large and/or if a high relaxation order is required to certify the global optimality. To overcome this issue, many techniques were developed to exploit the polynomial data of problems to reduce the size of the SDP matrices while maintaining the convergence properties. These works include exploiting the symmetry in the problem \cite{riener2013exploiting,lofberg2009pre}, using other positivity certificates \cite{ahmadi2014dsos,lasserre2017bounded} and by exploiting the constant trace property of semidefinite relaxations \cite{yurtsever2021scalable,mai2023hierarchy}. 

 Another way of improving performance is to exploit the link between the variables involved in the polynomials defining the objective function and the constraints, which is called the correlative sparsity pattern (CSP) \cite{lasserre2006convergent,waki2006sums}. It consists of defining a graph in which the nodes represent the variables of the POP, and an edge is added between two nodes if the corresponding two variables are involved in the expression of the objective function or in one of the constraints. According to the maximal cliques of the CSP graph, partitions of the variables are defined, leading to a decomposition using quasi block-diagonal SDP matrices. In addition, the CSP technique conserves the theoretical convergence and also the extraction procedure of the global minimizers \cite[Chapter $3$]{magron2023sparse}. Cheaper alternatives based on CSP, with better complexity and without theoretical convergence, have been used \cite{franc2023minimal,henrion2023occupation}. However, the CSP technique is not suitable for topology optimization problems that we consider in this work because one constraint involves all (resp. almost all) the variables of the weight (resp. compliance) problem.

The term sparsity pattern (TSP) \cite{magron2023sparse} approach aims to exploit sparsity from the perspective of monomials. As for CSP, the sparsity pattern is also represented by a graph. Its nodes are elements of the monomial basis, and an edge is added between two nodes if the product of the corresponding monomials appears in the objective function or any of the constraints. A sequence of embedding graphs is then constructed from the TSP graph by two successive operations: the support extension and the chordal extension. This sequence is indexed by an integer called sparsity order. The maximal cliques of these graphs enable decomposing the SDP matrices into smaller ones and lead to savings in computations. The choice of the chordal extension operation determines the complexity and the convergence properties: minimal chordal extension leads to smaller maximal cliques but no guarantee on theoretical convergence; the use of maximal chordal extension (block completion) guarantees the convergence but leads to larger maximal cliques. However, to the best of our knowledge, there is no general procedure for extracting minimizers while using TSP.

\subsection{Contribution and the organization of the paper}
The aim of this contribution is to certify global optimality in topology optimization of frame structure problems by exploiting term sparsity. In particular, we
\begin{itemize}
\item adapt the TSP method to handle polynomial matrix inequalities by looking at the PMI as matrix-linear combination of the monomial basis;
\item use a reduced (non-mixed) monomial basis that matches the separable nature of polynomials in the frame structure problems. This basis is used in two ways: for the construction of smaller-sized dense mSOS or combined with TSP. 
\end{itemize}

 This paper is presented in a way that ensures accessibility also for readers in the field of structural engineering. It is organized as follows. We start by presenting the polynomial formulation of frame structure problems in Section \ref{sec:frame_structures}, and Section \ref{sec:lassere_mSOS} provides an outline of the Lasserre mSOS hierarchy. Moreover, we also recall stopping criteria for global $\varepsilon-$optimality in the context of frame structure problems. In Section \ref{sec:mainTSP}, we explain the technical steps to apply the term sparsity pattern technique and highlight, through examples, the term sparsity occurring in frame structure problems. We end this section by discussing the non-mixed monomial basis. Section \ref{sec:numerical-experience} provides numerical experiments by comparing different term sparsity strategies. Finally, we summarize our contributions and suggest related future work in Section \ref{sec:conclusion-perspective}.  
\section{Background: topology optimization of frame structures}
\label{sec:frame_structures}
This section summarizes the main results on the design of frame structures using polynomial optimization \cite{tyburec2021global,tyburec2022global}. In particular, here we consider (i) minimization of structural compliance under a weight constraint and (ii) minimization of weight under a compliance constraint.
 
\subsection{Formulation}

Consider a fixed finite element discretization consisting of $n_\mathrm{e}$ prismatic Euler-Bernoulli elements. In a topology optimization problem, we search for continuous cross-sectional areas $\bm{a} \in \mathbb{R}^{n_\mathrm{e}}_{\ge 0}$ of individual finite elements that provide the most efficient design with respect to the compliance, $c(\bm{a})$, and weight, $w(\bm{a})$, functions.

For simplicity, here we review a single-load scenario and design-independent loads. We refer the reader to \cite{tyburec2021global,tyburec2022global} for an extension to other cases. Therefore, we define the compliance function as 

\begin{equation}
    c(\bm{a}) := \bm{f}^\mathrm{T} \bm{u}, \text{ where } \bm{K} (\bm{a}) \bm{u} = \bm{f},
\end{equation}

where $\bm{f} \in \mathbb{R}^{n_{\text{dof}}}$ and $\bm{u} \in \mathbb{R}^{n_{\text{dof}}}$ denote the force and displacement column vectors, $ n_{\text{dof}} \in \mathbb{N}$ stands for the associated number of degrees of freedom, and $\bm{K}(\bm{a}) \in \mathbb{R}^{n_{\text{dof}}\times n_{\text{dof}}}$ is the corresponding symmetric positive semidefinite stiffness matrix assembled as

\begin{equation}
    \bm{K} (\bm{a})=\bm{K}_{0}+\sum_{e=1}^{n_e} \left[\bm{K}_{e}^{(1)}a_e+\bm{K}_{e}^{(2)}a_e^2+\bm{K}_{e}^{(3)}a_e^3 \right].
     \label{stifness_matrix}
 \end{equation}

In \eqref{stifness_matrix}, the matrix $ \bm{K}_{0} \succeq 0$, with ``$\succeq 0$'' denoting semidefiniteness of $\bm{K}_0$, collects the design-independent stiffnesses belonging, e.g., to the non-optimized structural elements or springs, $\bm{K}_e^{(1)}$ stands for the membrane stiffness of element $e$, and the matrices $\bm{K}_e^{(2)}$ and $\bm{K}_e^{(3)}$ are associated with bending. Furthermore, by the construction of the stiffness matrices, we have $\forall i \in \lbrace 1, 2, 3 \rbrace : \bm{K}_{e}^{(i)} \succeq 0$. Thus, while \eqref{stifness_matrix} is a polynomial function of $\bm{a}$, it is a linear function of monomials $a_e^i$. For the assembled stiffness matrix $\bm{K}(\bm{a})$, we also suppose that $\forall \bm{a} >\bm{0}: \bm{K}(\bm{a}) \succ 0$, with ``$\succ 0$'' denoting positive definiteness, to exclude the undesired cases of rigid body motions.

Finding the minimum-compliant structure for a maximum weight $\overline{w} \in \mathbb{R}_{> 0}$ is naturally formulated as
\begin{minie}|s|%
{\strut \bm{a}, \bm{u}}%
 { \bm{f}^T\bm{u}\label{compliance_obj}}%
{\label{compliance_problem}}%
{}
\addConstraint{\bm{K}(\bm{a})\bm{u}-\bm{f}}{= \bm{0}\label{compliance_const1}}
\addConstraint{ \overline{w}-\sum_{e=1}^{n_\mathrm{e}}\ell_e \rho_e a_e}{\geq  0 \label{compliance_const2}}
\addConstraint{\bm{a}}{\geq \bm{0}, \label{compliance_const3}}
\end{minie}
with the weight $w(\bm{a}) = \sum_{e=1}^{n_\mathrm{e}} \ell_e \rho_e a_e$ computed from the element lengths $\bm{\ell} \in \mathbb{R}^{n_\mathrm{e}}_{>0}$ and their densities $\bm{\rho} \in \mathbb{R}^{n_\mathrm{e}}_{>0}$.

Similarly, the minimum-weight design for a given maximum compliance $\overline{c} \in \mathbb{R}_{>0}$ is stated as
\begin{minie}|s|%
{\strut \bm{a}, \bm{u}}%
 { \sum_{e=1}^{n_\mathrm{e}}\ell_e \rho_e a_e\label{weight_obj}}%
{\label{weight_problem}}%
{}
\addConstraint{\bm{K}(\bm{a})\bm{u}-\bm{f}}{= \bm{0}\label{weight_const1}}
\addConstraint{ \overline{c}-\bm{f}^T\bm{u}}{\geq  0 \label{weight_const2}}
\addConstraint{\bm{a}}{\geq \bm{0}. \label{weight_const3}}
\end{minie}

In \cite{Achtziger2008}, it has been shown that state variables $\bm{u}$ can be eliminated from the formulations, leading to equivalent semidefinite programs
\begin{mini!}|s|%
{\strut \bm{a}, c}%
 {  c \label{compliance_obj_sdp}}%
{\label{compliance_problem_sdp}}%
{}
\addConstraint{\begin{pmatrix}
    c & -\bm{f}^T \\ -\bm{f} & \bm{K}(\bm{a})
\end{pmatrix}}{\succeq 0 \label{compliance_const1_sdp}}
\addConstraint{\sum_{e=1}^{n_\mathrm{e}} \rho_e \ell_e a_e}{\le  \overline{w} \label{compliance_const2_sdp}}
\addConstraint{a_e}{\geq 0, ~\forall e \in \lbrace 1, \ldots, n_{e}\rbrace\label{compliance_const3_sdp}}
\end{mini!}
and 
\begin{mini!}|s|%
{\strut \bm{a}}%
 { \sum_{e=1}^{n_\mathrm{e}} \rho_e \ell_e a_e \label{weight_obj_sdp}}%
{\label{weight_problem_sdp}}%
{}
\addConstraint{\begin{pmatrix}
    \overline{c}& -\bm{f}^T \\ -\bm{f} & \bm{K}(\bm{a})
\end{pmatrix}}{\succeq 0\label{weight_const1_sdp}}
\addConstraint{a_e}{\geq 0, ~\forall e \in \lbrace 1, \ldots, n_{e}\rbrace,\label{weight_const2_sdp}}
\end{mini!}
in which the matrix inequalities \eqref{compliance_const1_sdp} and \eqref{weight_const1_sdp} enforce the equilibrium.

Without loss of generality, we further consider that optimal compliance $c^*$ can be bounded as $0 \le c^* \le \overline{c}$, where $\overline{c}$ is a constant. While the lower bound follows from the problem physics, the upper bound is problem-specific. In the case of weight minimization under a compliance constraint, $\overline{c}$ is already predefined in \eqref{weight_const2_sdp}. However, for compliance minimization, $\overline{c}$ can still be obtained from any statically-admissible design satisfying the resource constraint \eqref{compliance_const2_sdp}, that is, $\overline{c} = \bm{f}^T \bm{K}(\hat{\bm{a}})^\dagger \bm{f}$ for any $\hat{\bm{a}} \in \left\{{\bm{a}} \;\vert\; \bm{f} \in \mathrm{Im}(\bm{K}({\bm{a}})), w({\bm{a}}) \le \overline{w} \right\}$ in which $\bullet^\dagger$ denotes the Moore-Penrose pseudoinverse of $\bullet$; see \cite{tyburec2021global} for more details. As an example, we may take $\hat{\bm{a}} = \bm{1} \overline{w}/\bm{\ell}^T \bm{\rho}$.

The second function controlling the structural design is its weight $w(\bm{a}) := \sum_{e=1}^{n_\mathrm{e}} \rho_e \ell_e a_e$, calculated from the contributions of the element densities $\rho_e$ and their lengths $\ell_e$. Similarly to the compliance function, we also require the weight function to be bounded. In particular, for the optimum cross-section areas $\bm{a}^*$ it must hold that $0 \le w(\bm{a}^*) \le \overline{w}$, where $\overline{w}$ is a constant value. Indeed, for the case of compliance minimization, $\overline{w}$ follows directly from a resource constraint \eqref{compliance_const2_sdp}, while in the case of weight minimization, $\overline{w}$ can be computed based on a statically admissible design that satisfies the compliance constraint $\hat{\bm{a}} \in \left\{{\bm{a}} \;\vert\; \bm{f} \in \mathrm{Im}(\bm{K}({\bm{a}})),  c({\bm{a}})\le \overline{c} \right\}$. A suitable value of $\hat{\bm{a}}$ is, for example, $\hat{\bm{a}} = \delta\bm{1}$, with minimal $\delta$ satisfying the compliance constraint \eqref{weight_const2_sdp} found by bisection. Again, we refer the interested reader to \cite{tyburec2022global} for more details.

Then, the space of the cross-section areas $\bm{a}$ and of compliance $c$ can be bounded by
\begin{subequations}
\begin{align}
     \forall e \in \{1,\dots, n_\mathrm{e}\}: 0 \le a_e \le \frac{\overline{w}}{\ell_e \rho_e} &\longleftrightarrow a_e \left(\frac{\overline{w}}{\rho_e \ell_e} - a_e\right) \ge 0,\\
     0 \le c \le \overline{c} &\longleftrightarrow c \left(\overline{c} - c\right) \ge 0.
\end{align}
\end{subequations}
Using the substitutions
\begin{subequations}
    \begin{align}
        a_e &= \frac{\overline{w}}{2 \rho_e \ell_e} (a_{\mathrm{s},e} + 1),\\
        c &= 0.5 \overline{c} (c_{\mathrm{s}} + 1),
    \end{align}
\end{subequations}
to secure $[-1,1]$ domains of the design variables, the compliance optimization problem is formalized as \cite{tyburec2021global} 
\begin{mini!}|s|%
{\strut \bm{a}_s, c_s}%
 {  0.5\overline{c}(c_{s} +1) \label{compliance_obj_sdp_scalled}}%
{\label{compliance_problem_sdp_scalled}}%
{}
\addConstraint{\begin{pmatrix}
    0.5( c_{s}+1 )\overline{c}& -\bm{f}^T \\ -\bm{f} & \bm{K}(\bm{a}_s)
\end{pmatrix}}{\succeq 0 \label{compliance_const1_sdp_scalled}}
\addConstraint{ 2-n_e-\bm{1}^T\bm{a}_s}{\geq  0 \label{compliance_const2_sdp_scalled}}
\addConstraint{1-a_{s,e}^2}{\geq 0, ~\forall e \in \lbrace 1, \ldots, n_{e}\rbrace\label{compliance_const3_sdp_scalled}}
\addConstraint{1-c_{s}^2}{\geq 0, \label{compliance_const4_sdp_scalled}}
\end{mini!}
in which we slightly abuse the notation and write $\bm{K}(\bm{a})$ in terms of $\bm{a}_\mathrm{s}$ as $\bm{K}(\bm{a}_\mathrm{s})$. Similarly, the weight minimization problem reads as \cite{tyburec2022global}
\begin{mini!}|s|%
{\strut \bm{a}_s}%
 { 0.5 \overline{w} \left(n_\mathrm{e} + \sum_{e=1}^{n_\mathrm{e}} a_{\mathrm{s},e}\right) \label{weight_obj_sdp_scalled}}%
{\label{weight_problem_sdp_scalled}}%
{}
\addConstraint{\begin{pmatrix}
    \overline{c}& -\bm{f}^T \\ -\bm{f} & \bm{K}(\bm{a}_s)
\end{pmatrix}}{\succeq 0\label{weight_const1_sdp_scalled}}
\addConstraint{1-a_{s,e}^2}{\geq 0, ~\forall e \in \lbrace 1, \ldots, n_{e}\rbrace.\label{weight_const2_sdp_scalled}}
\end{mini!}

\subsection{Illustration problem}
We conclude this section by introducing a small topology optimization problem. Its aim is to demonstrate the reformulation procedure and to provide a basis for illuminating the techniques presented in subsequent sections of this article.

The illustration problem contains three Euler-Bernoulli frame elements and four nodes; see Fig.~\ref{fig:illustration}. Three of the nodes are considered clamped, while the remaining node is hinged and under a moment load of unitary magnitude. Thus, this problem has a single degree of freedom.

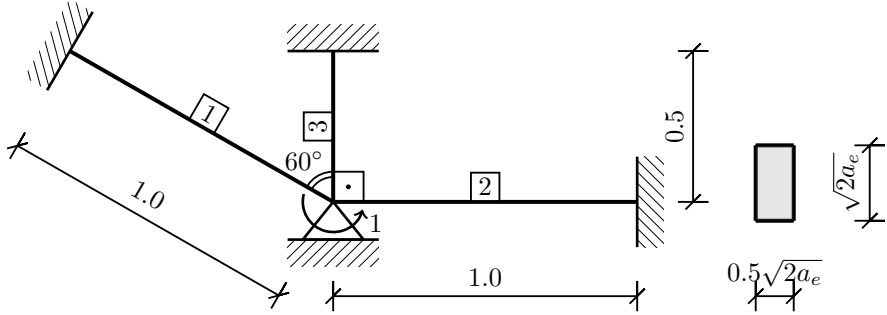
\begin{figure}[!htbp]
\centering
\begin{tikzpicture}
    \scaling{4}
    \point{a}{0}{0.5};
    \point{a0}{-0.28}{0};
    \point{b}{0.866025404}{0};
    \point{c}{1.866025404}{0};
    \point{d}{0.866025404}{0.5};
    \beam{2}{a}{b};
    \notation{4}{a}{b}[$1$];
    \beam{2}{b}{c};
    \notation{4}{b}{c}[$2$];
    \beam{2}{b}{d};
    \notation{4}{b}{d}[$3$];
    \support{3}{a}[240];
    \support{3}{c}[90];
    \support{3}{d}[180];
    \support{1}{b}[0];
    \load{2}{b}[-15][-180];
    \notation{1}{b}{$1$}[below right=0.5mm and 3.5mm];
    \addon{2}{b}{d}{c}[-1];
    \addon{3}{b}{a}{d}[-1];
    \notation{1}{b}{$60^\circ$}[above left=3mm and 0mm];
    \dimensioning{1}{b}{c}{-1.25}[$1.0$];
    \dimensioning{2}{b}{d}{8.2}[$0.5$];
    \begin{scope}[rotate=-30]
    \dimensioning{1}{a0}{b}{0.3}[\rotatebox{-30}{$1.0$}]
    \end{scope}
\end{tikzpicture}
\begin{tikzpicture}
    \point{a}{0}{0};
    \point{b}{0.5}{0};
    \point{c}{0.5}{1};
    \point{d}{0}{1};
    \beam{2}{a}{b};
    \beam{2}{b}{c};
    \beam{2}{c}{d};
    \beam{2}{d}{a};
    \draw[fill=gray, fill opacity=0.2] (a) -- (b) -- (c) -- (d) -- cycle;
    \dimensioning{1}{a}{b}{-1.0}[$0.5\sqrt{2a_e}$];
    \dimensioning{2}{b}{c}{1.5}[$\sqrt{2a_e}$];
\end{tikzpicture}
\caption{Illustrative problem: boundary conditions and cross-section parametrization.}
\label{fig:illustration}
\end{figure}

The structure is made of a linear elastic material of a dimensionless Young modulus $E=1$ and a density $\rho_e=1$ for all elements $e$. We assume rectangular cross-section areas $a_e$ for all elements $e$, with the height-to-width ratio fixed to $2$. The goal of optimization is to find the lightest structure for which the compliance (rotation at the moment load location) is at most $37.5$.

Let $I_e (a_e):= \frac{1}{6} a_e^2$ denote the second moment of area of the elements. Then, the principal submatrix associated with the degrees of freedom of the stiffness matrix is the $1\times1$ matrix%
\begin{equation}
    \bm{K}(\bm{a}) = 
       \frac{2}{3}a_1^2 + \frac{2}{3}a_2^2 + \frac{4}{3}a_3^2.
\end{equation}
Similarly, the force vector $\bm{f}$ associated with the degrees of freedom is $\bm{f} = 1$.

An upper bound feasible design based on the uniform distribution of cross sections, $a_1=a_2=a_3 = \frac{1}{10}$, has the weight of $\overline{w} = \frac{1}{4}$. Thus, it follows that
\begin{subequations}
\begin{align}
    0 \le a_1 \le \frac{1}{4} & \longleftrightarrow a_1 = \frac{1}{8} + \frac{1}{8}a_{\mathrm{s},1} \text{ with } a_{s,1}^2 \le 1,\\
    0 \le a_2 \le \frac{1}{4} & \longleftrightarrow a_2 = \frac{1}{8} + \frac{1}{8}a_{\mathrm{s},2} \text{ with } a_{s,2}^2 \le 1,\\
    0 \le a_3 \le \frac{1}{2} & \longleftrightarrow a_3 = \frac{1}{4} + \frac{1}{4}a_{\mathrm{s},3} \text{ with } a_{s,3}^2 \le 1,
\end{align}
\end{subequations}
with the scaled design variables $\bm{a}_s \in [-1,1]^3$ introduced for accurate numerical solution.

Consequently, we can write $\bm{K}(\bm{a})$ in terms of $\bm{a}_\mathrm{s}$ as
\begin{equation}\label{eq:stiffness_scaled_example}
    \bm{K}(\bm{a}_\mathrm{s}) = \frac{5}{48} + \frac{1}{96}a_{\mathrm{s},1}^2 + \frac{1}{48}a_{\mathrm{s},1} + \frac{1}{96}a_{\mathrm{s},2}^2 + \frac{1}{48}a_{\mathrm{s},2} + \frac{1}{12}a_{\mathrm{s},3}^2 + \frac{1}{6}a_{\mathrm{s},3}
\end{equation}
and the optimization problem is formulated as
\begin{mini!}|s|%
{\strut \bm{a}_s}%
 { \frac{3}{8} + \frac{1}{8}(a_{\mathrm{s},1} + a_{\mathrm{s},2}+ a_{\mathrm{s},3}) \label{weight_obj_example}}%
{\label{weight_problem_example}}%
{}
\addConstraint{\begin{pmatrix}
        \frac{75}{2} & -1\\
        -1 & \bm{K}(\bm{a}_\mathrm{s})
    \end{pmatrix}}{\succeq 0\label{weight_const1_example}}
\addConstraint{1-a_{s,e}^2}{\geq 0, ~\forall e \in \lbrace 1, \ldots, n_{e}\rbrace.\label{weight_const2_example}}
\end{mini!}

The optimization problem \eqref{weight_problem_example} has three locally optimal solutions written in terms of cross-sectional variables $\bm{a}$ as $\bm{a}_1^* = (\frac{1}{5}, 0, 0)$ with the corresponding weight $w(\bm{a}_1^*) = \frac{1}{5}$, $\bm{a}_2^* = (0, \frac{1}{5}, 0)$ associated with the weight $w(\bm{a}_2^*) = \frac{1}{5}$, and the globally optimal design $\bm{a}_3^* = (0, 0, \frac{\sqrt{2}}{10})$ of the weight $w(\bm{a}_3^*) = \frac{\sqrt{2}}{20}$. These local solutions are indicated as green points in Fig.~\ref{fig:feasible_set}.
\begin{figure}[!htbp]
    \centering
    \includegraphics[width=8cm]{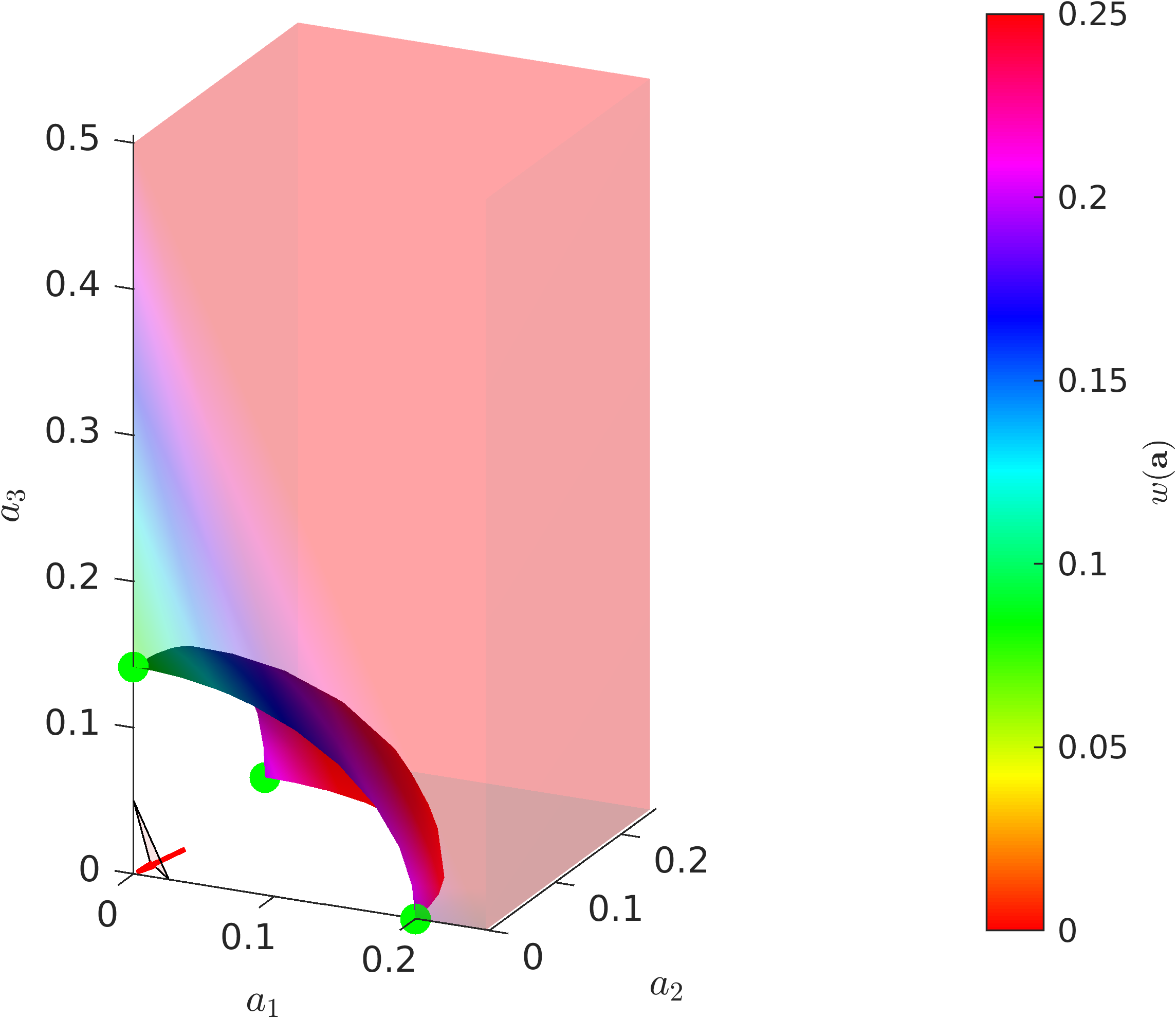}
    \caption{Feasible set of the illustration problem in terms of the cross-section areas $\bm{a}$. The red arrow, which is a normal vector of the triangular patch in the lower left corner, indicates the direction of minimization. Scattered green points denote local minimizers. The boundaries of the feasible set are drawn as semitransparent surfaces, and their color denotes the weight $w(\bm{a})$.}
    \label{fig:feasible_set}
\end{figure}

\section{Lasserre's moment sum-of-square hierarchy} 
\label{sec:lassere_mSOS}
This section is devoted to explaining practical steps for building the Lasserre moment-sum-of-squares (mSOS) hierarchy. This comprehensive part is crucial for understanding the exploitation of term sparsity in the following sections. The theory behind the mSOS hierarchy is beyond the scope of this document, but the reader can consult \cite{lasserre2015introduction,henrion2006convergent} for more details and theoretical aspects, in particular.

We start by introducing tools on multivariate polynomials that will be necessary in the sequel. 

\subsection{Definitions and notations}\label{sec:preliminaries}

Let $\bm{x} \in \mathbb{R}^n$ be a vector of variables. A polynomial $p $ in $\bm{x}$ and of degree $\text{deg}(p)=d$ is written as $p(\bm{x})=  \sum_{\bm{\alpha} \in \mathbb{N}^n_d}p_{\bm{\alpha}}\bm{x}^{\bm{\alpha}}$ where $p_{\bm{\alpha}} \in \mathbb{R}$ is a real number indexed by $\bm{\alpha}$, $\bm{x}^{\bm{\alpha}}=x_1^{\alpha_{1}} \ldots x_n^{\alpha_{n}}$ and $\mathbb{N}^n_d=\left\lbrace \bm{\alpha} \in \mathbb{N}^n : \sum_{i=1}^n \alpha_i \leq d \right\rbrace$. It can also be written as $p(\bm{x})= \bm{q}^T\bm{b}_d(\bm{x})$, where
\begin{equation}
\begin{aligned}
\bm{b}_d(\bm{x})=&\Big( 1 \quad x_1 \quad x_2 \quad \ldots \quad x_1^2 \quad x_1x_2 \quad  \ldots  \quad x_1x_n \\ &  x_2^2 \quad x_2x_3 \quad   \ldots \quad x_n^2 \quad \ldots \quad x_1^d \quad \ldots \quad x_n^d \Big)^T
\end{aligned}
\label{strandard_basis}
\end{equation}
is the standard (or canonical) monomial basis of the polynomial ring $\mathbb{R}[\bm{x}]_d$ and $\bm{q} \in \mathbb{R}^{\vert\bm{b}_d(\bm{x}) \vert}$ is a vector of coefficients associated with the basis $\bm{b}_d(\bm{x})$. Let us note that $\vert \bm{b}_d(\bm{x}) \vert = \binom{n+d}{n}$ where the notation $\vert \bullet \vert $ stands for the set $\bullet$ cardinality. The representation basis \eqref{strandard_basis} is often not unique. In fact, if the expression of a polynomial $p$ does not involve all the monomial terms of the standard basis $\bm{b}_d(\bm{x})$, one can represent $p$ in another basis with fewer monomial terms than in $\bm{b}_d(\bm{x})$ with an appropriate vector $\bm{q}$. This is illustrated in the following example. 

\begin{example}
Let $p(\bm{x})=1+4x_1x_2-3x_1^2+4x_1x_2^2+6x_2^2$ be a polynomial in $n=2$ variables and of degree $d=3$. It can be written as $p(\bm{x})=\bm{q}^T \bm{b} (\bm{x})$ where
 $\bm{b} (\bm{x})=\bm{b}_3(\bm{x})= \begin{pmatrix}
     1 & x_1 & x_2 & x_1^2 & x_1x_2 & x_2^2 & x_1^3 & x_1^2x_2 & x_1x_2^2 & x_2^3
 \end{pmatrix}^T $ and $\bm{q}= \begin{pmatrix}
     1 & 0 & 0 & -3 & 4 & 6 & 0 & 0 & 4 &  0
 \end{pmatrix} $. 
However, we can also choose $\bm{b} (\bm{x})=\begin{pmatrix}
    1 & x_1^2 & x_1x_2 & x_2^2 & x_1x_2^2
\end{pmatrix}^T $ and
$\bm{q}= \begin{pmatrix}
    1 & -3 & 4 & 6 & 5
\end{pmatrix}.$ 
\label{example_basis}
\end{example}

Next, we denote the set of real symmetric matrices of order $s$ with $\mathbb{S}^s$. Let $\bm{G}$ be a real symmetric polynomial matrix, i.e.,  $\bm{G} : \mathbb{R}^n \rightarrow \mathbb{S}^s$ such that $\forall i, j : G_{i,j}(\bm{x})=G_{j,i}(\bm{x})$ are scalar polynomials of $\bm{x}$. The degree of $\bm{G}(\bm{x})$ is the highest degree of polynomials in $\bm{G}(\bm{x})$ and will be denoted as $\text{deg}(\bm{G})$ in the following. A polynomial matrix $\bm{G}$ of degree $d$ can be written as
$\bm{G}(\bm{x})=\sum_{\bm{\alpha} \in \mathbb{N}^n_d}\bm{G}_{\bm{\alpha}}\bm{x}^{\bm{\alpha}}$ where $\bm{G}_{\bm{\alpha}} \in \mathbb{S}^s$. In particular, if $s=1$ then $\bm{G}$ is a scalar polynomial. A polynomial matrix inequality (PMI) is an inequality of the form $\bm{G}(\bm{x}) \succeq 0$.

Let us now consider the standard basis $\bm{b}_d(\bm{x})$ defined in \eqref{strandard_basis} for an $\bm{x} \in \mathbb{R}^n$. Let $\bm{y} \in \mathbb{R}^{\vert \bm{b}_d(\bm{x}) \vert }$ and $\bm{\alpha} \in \mathbb{N}^{n}_d$. By the notation $y_{\bm{\alpha}}$ we mean the component of the vector $\bm{y}$ that corresponds to the exponent $\bm{\alpha}$ of a monomial $\bm{x^\alpha}=x_1^{\alpha_1}x_2^{\alpha_2}\ldots x_n^{\alpha_n}$ in $\bm{b}_d(\bm{x})$. 
For example, if $\bm{x}=\begin{pmatrix} x_1 & x_2 & x_3 & x_4 \end{pmatrix}$, then $y_{3102}$  is the component of $\bm{y}$ that corresponds to the monomial $x_1^3x_2x_4^2 \in \bm{b}_6(\bm{x}) $ where $\bm{\alpha}= \begin{pmatrix}    3 & 1 & 0 & 2 \end{pmatrix}$. In Section \ref{certificate-global}, we will also use the notation $y_{\bm{x^\alpha}}$ to refer to $y_{\bm{\alpha}}$, allowing to write $y_{3102}$ as $y_{x_1^3 x_2 x_4^2}$.

 Now, given a polynomial expression $\sum_{\bm{\alpha}}p_{\bm{\alpha}}\bm{x}^{\bm{\alpha}}$, we can linearize it using the following  functional $L_{\bm{y}} : \mathbb{R}[\bm{x}]_d  \rightarrow \mathbb{R}$ such that $$L_{\bm{y}} \left(\sum_{\bm{\alpha}}p_{\bm{\alpha}}\bm{x}^{\bm{\alpha}}\right)=\sum_{\bm{\alpha}}p_{\bm{\alpha}}y_{\bm{\alpha}},$$ where $\bm{y}=(y_{\bm{\alpha}})_{\bm{\alpha} \in \mathbb{N}^n_d}$ is a sequence indexed by the standard monomial basis. In the following, the notation $L_{\bm{y}} (\bm{W}(\bm{x}))$, where $\bm{W}(\bm{x})$ is a polynomial matrix, means that the functional $L_{\bm{y}} $ is applied entry-wise to the matrix $\bm{W}$.

\begin{definition}[Moment and localizing matrix]
\label{def-moment-matrix}
Let $\bm{x} \in \mathbb{R}^n$, $\bm{y} \in \mathbb{R}^{\vert \bm{b}_{2d}(\bm{x}) \vert}$ and $\bm{G}(\bm{x})$ a polynomial matrix of degree $s$.
\begin{itemize}
    \item The moment matrix $\bm{M}_d(\bm{y})$ associated with $\bm{y}$ is the matrix whose rows and columns are indexed by the standard monomial basis $\bm{b}_d(\bm{x})$ such that $$ \bm{M}_d(\bm{y})=L_{\bm{y}} \left( \bm{b}_d(\bm{x})\bm{b}_d(\bm{x})^T  \right)=\left(y_{\bm{\alpha} +\bm{\beta}} \right) _{\bm{\alpha}, \bm{\beta} \in \mathbb{N}^n_d}.$$
    \item The localizing matrix associated to $\bm{G}$ and $\bm{y}$ is the matrix with rows and columns indexed by the standard monomial basis $ \bm{b}_{d-\lceil s/2 \rceil}(\bm{x})$, such that 

\begin{align*}
    \bm{M}_d(\bm{G}\bm{y})&=L_{\bm{y}} \left( \bm{b}_{d-\lceil s/2 \rceil}(\bm{x})^T\bm{b}_{d-\lceil s/2 \rceil}(\bm{x}) \otimes \bm{G}(\bm{x}) \right)\\
    & =\left(\sum_{\bm{\gamma} \in \mathbb{N}^n_s} y_{\bm{\alpha} +\bm{\beta}+\bm{\gamma}} \otimes \bm{G}_{\bm{\gamma}} \right) _{\bm{\alpha}, \bm{\beta} \in \mathbb{N}^n_d},
\end{align*}
   where the symbol $\otimes$ stands for the Kronecker or tensor product and $\lceil\bullet\rceil$ rounds $\bullet$ up to the nearest integer.
\end{itemize}
\end{definition}

\subsection{The moment sum-of-squares hierarchy}
\label{mSOS}
Consider now an optimization problem of the form
\begin{equation}
\begin{aligned}
&  f^*=&\underset{\bm{x} \in \mathbb{R}^n}{\inf} f(\bm{x}),&  \\
& \text{s.t. }& \bm{G}_j(\bm{x}) \succeq 0,&~ \forall j \in \lbrace 1,\ldots, m\rbrace ,
\end{aligned}
\label{POP1}
\end{equation}
where $f$ is a scalar polynomial, and $\bm{G}_j$ for $ j \in \lbrace 1, \ldots, m \rbrace$ are symmetric matrix polynomials in the variable $\bm{x}$.

Let $d_j=\left\lceil \frac{\text{deg}(\bm{G}_j)}{2}\right\rceil$ for $j \in \lbrace 1, \ldots, m \rbrace$ and $r \geq \max \left\lbrace \left\lceil \frac{\text{deg}(f)}{2}\right\rceil, d_1,\ldots d_m  \right\rbrace$. \emph{Lasserre's moment sum-of-square hierarchy} (mSOS) of SDP relaxations of \eqref{POP1} indexed by the relaxation order $r$ reads as 
\begin{equation}
\begin{aligned}
&  f_r=&\underset{\bm{y} \in \mathbb{R}^{\vert \bm{b_{2r}}(\bm{x}) \vert}}{\inf} \bm{L_y}\left(f \right),&  \\
& \text{s.t. }& \bm{M}_{r-d_j}(\bm{G}_j \bm{y}) \succeq 0, &~ \forall j \in \lbrace 1,\ldots, m \rbrace, \\
& & \bm{M}_r(\bm{y}) \succeq 0,& \\
& & y_{\bm{0}}=1.
\end{aligned}
\label{lassereSOS}
\end{equation}
A solution $\bm{y}^*$ to \eqref{lassereSOS} is called a vector of optimal moments. Next, we will state a theorem that proves that, under an assumption on algebraic compactness of the feasible set, the sequence of lower bounds $f_r$ indexed by $r$ and obtained by solving \eqref{lassereSOS} converges monotonically to the optimal value of problem \eqref{POP1}.

First, let us note that the constraints of the problem \eqref{POP1} can be concatenated into a single PMI constraint. This follows from replacing the constraints by $\widetilde{\bm{G}}(\bm{x}) \succeq 0 $ where $\widetilde{\bm{G}}(\bm{x})=\text{diag}\left(\bm{G}_{1}(\bm{x}), \ldots,\bm{G}_{m}(\bm{x}) \right) \in \mathbb{S}^N$.

Let us first recall that a polynomial matrix $\bm{G}: \mathbb{R}^n \rightarrow \mathbb{S}^s $ of even degree is a \emph{matrix SOS} if there exists a polynomial matrix $\bm{H}: \mathbb{R}^n \rightarrow \mathbb{R}^{s \times \ell}  $ \text{for some }$\ell \in \mathbb{N}$, such that $$ \bm{G}(\bm{x})= \bm{H}(\bm{x}) \bm{H}(\bm{x})^T.$$

\begin{assumption}
Assume that there exist matrix SOS polynomials $p_0 : \mathbb{R}^n  \rightarrow \mathbb{R}$ and $\bm{R}: \mathbb{R}^n  \rightarrow \mathbb{S}^N$ such that the set $\left\lbrace \bm{x} \in \mathbb{R}^n : p_0(\bm{x})+  \left\langle \bm{R}(\bm{x}), \widetilde{\bm{G}}(\bm{x}) \right\rangle \geq 0 \right\rbrace$ is compact. 
\label{archimedian}
\end{assumption}

\begin{theorem}[\cite{henrion2006convergent}, Theorem 2.2]
If Assumption \ref{archimedian} is satisfied, then $f_r \nearrow f^*$ as $r \rightarrow \infty$ in \eqref{lassereSOS}, i.e., there exists an $\tilde{r} \geq r$ such that
$$ f_r \leq f_{r+1} \leq \ldots \leq f_{\tilde{r}}=f^*.$$ 
\label{Henrion2006}
\end{theorem}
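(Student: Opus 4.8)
The plan is to split the statement into two parts: the elementary facts that the sequence $(f_r)$ is non-decreasing and bounded above by $f^*$, and the substantive fact that its limit equals $f^*$. For the first part I would argue as follows. Any point $\bm{x}$ feasible for \eqref{POP1} induces, at every order $r$, an admissible moment vector for \eqref{lassereSOS}: take $y_{\bm{\alpha}} = \bm{x}^{\bm{\alpha}}$, the moments of the Dirac measure at $\bm{x}$. Then $\bm{M}_r(\bm{y}) = L_{\bm{y}}(\bm{b}_r(\bm{x})\bm{b}_r(\bm{x})^T) = \bm{b}_r(\bm{x})\bm{b}_r(\bm{x})^T \succeq 0$, each localizing matrix $\bm{M}_{r-d_j}(\bm{G}_j\bm{y}) \succeq 0$ since $\bm{G}_j(\bm{x}) \succeq 0$, and $L_{\bm{y}}(f) = f(\bm{x})$, so $f_r \le f^*$ for all $r$. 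Monotonicity $f_r \le f_{r+1}$ follows because truncating any order-$(r+1)$ feasible moment vector to degree $2r$ yields an order-$r$ feasible vector with identical objective value; thus the order-$(r+1)$ relaxation is a restriction of the order-$r$ one and has a no-smaller optimum. The sequence is therefore non-decreasing and bounded, so it converges to some limit $\ell \le f^*$.

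The heart of the matter is to show $\ell \ge f^*$, which I would approach through the dual of \eqref{lassereSOS}. This dual is a sum-of-squares program seeking the largest $\lambda$ for which $f(\bm{x}) - \lambda = \sigma_0(\bm{x}) + \left\langle \bm{R}(\bm{x}), \widetilde{\bm{G}}(\bm{x}) \right\rangle$ holds with $\sigma_0$ an SOS polynomial and $\bm{R}$ a matrix SOS, both degree-truncated at order $r$. The key analytic input is a matrix analogue of Putinar's Positivstellensatz: under Assumption \ref{archimedian}, every polynomial strictly positive on $K := \{\bm{x} \in \mathbb{R}^n : \widetilde{\bm{G}}(\bm{x}) \succeq 0\}$ admits a representation of exactly this weighted-SOS form for some finite degree. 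Granting this, I would fix $\varepsilon > 0$; the polynomial $f - (f^* - \varepsilon)$ is strictly positive on $K$, hence admits such a certificate at some order $r$, which is dual-feasible and proves a lower bound $\ge f^* - \varepsilon$. Passing through zero duality gap transfers this to the primal value $f_r$, and letting $\varepsilon \to 0$ gives $\ell = f^*$, finishing the argument.

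I expect the matrix Positivstellensatz to be the principal obstacle, since the scalar Putinar theorem does not apply directly to a PMI. The cleanest route I would take is a lifting trick: introduce auxiliary variables $\bm{z} \in \mathbb{R}^N$ and use the equivalence of $\widetilde{\bm{G}}(\bm{x}) \succeq 0$ with the scalar inequality $\bm{z}^T \widetilde{\bm{G}}(\bm{x}) \bm{z} \ge 0$ for all $\bm{z}$. This reduces the problem to an ordinary polynomial optimization in $(\bm{x}, \bm{z})$, to which the scalar theory applies, with Assumption \ref{archimedian} supplying the Archimedean/compactness condition in the $\bm{x}$-directions (after appending a constraint such as $\|\bm{z}\|^2 = 1$ to compactify the $\bm{z}$-directions). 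The technical work is then to translate the resulting scalar SOS certificate back into a genuine matrix SOS multiplier $\bm{R}$, exploiting homogeneity in $\bm{z}$ to clear the sphere constraint.

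A secondary technical point I would verify rather than assume is the absence of a duality gap between the moment program \eqref{lassereSOS} and its SOS dual. This typically follows from a Slater-type strict-feasibility condition for one of the two programs; here the Archimedean property in Assumption \ref{archimedian} is precisely what one uses to produce such an interior point, so I would make that verification explicit before invoking strong duality in the step above.
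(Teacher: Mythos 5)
You should first note that the paper does not prove this theorem at all: it is quoted verbatim from Henrion and Lasserre (their Theorem 2.2), so the comparison is really against the proof in that cited reference. Your architecture matches it: the first part of your argument (Dirac moment vectors of feasible points give $f_r \le f^*$, truncation gives monotonicity, hence a limit $\ell \le f^*$) is correct and standard, and the second part --- dual SOS program plus a matrix analogue of Putinar's Positivstellensatz under Assumption \ref{archimedian} --- is exactly the route of the cited proof, which invokes the matrix Positivstellensatz of Hol and Scherer. Two smaller remarks: (i) you do not need zero duality gap, only weak duality, which is immediate: if $f - \lambda = \sigma_0 + \langle \bm{R}, \widetilde{\bm{G}} \rangle$ with $\sigma_0$ SOS and $\bm{R}$ matrix SOS of appropriate degrees, then for any $\bm{y}$ feasible in \eqref{lassereSOS} one has $L_{\bm{y}}(f) - \lambda = L_{\bm{y}}(\sigma_0) + L_{\bm{y}}(\langle \bm{R}, \widetilde{\bm{G}}\rangle) \ge 0$ by positive semidefiniteness of the moment and localizing matrices, so $f_r \ge \lambda$ directly; this sidesteps your Slater verification, which in any case would not follow from the Archimedean condition alone. (ii) Your proof establishes $\lim_r f_r = f^*$, which is what Henrion--Lasserre's Theorem 2.2 actually asserts; the phrasing in the paper (``there exists $\tilde{r}$ with $f_{\tilde{r}} = f^*$'') is a stronger finite-convergence claim that does not follow from this argument and holds only generically.

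The genuine gap is in your proposed derivation of the matrix Positivstellensatz by lifting. The equivalence of $\widetilde{\bm{G}}(\bm{x}) \succeq 0$ with $\bm{z}^T \widetilde{\bm{G}}(\bm{x})\bm{z} \ge 0$ \emph{for all} $\bm{z}$ is a universally quantified statement, and replacing the PMI by the single scalar constraint $\bm{z}^T \widetilde{\bm{G}}(\bm{x})\bm{z} \ge 0$ in the variables $(\bm{x},\bm{z})$ quantifies it existentially: the lifted set $\{(\bm{x},\bm{z}) : \bm{z}^T \widetilde{\bm{G}}(\bm{x})\bm{z} \ge 0,\ \|\bm{z}\|^2 = 1\}$ strictly contains $K \times S^{N-1}$, since it includes points where $\widetilde{\bm{G}}(\bm{x})$ is indefinite but happens to be nonnegative in the particular direction $\bm{z}$. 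The polynomial $f - (f^* - \varepsilon)$ need not be positive on this larger set, so scalar Putinar applied there yields nothing, and no amount of homogeneity bookkeeping in $\bm{z}$ repairs this. This is precisely why the matrix Positivstellensatz is a genuinely separate result: Scherer and Hol prove it by a cone-closure and separation argument (and Klep--Schweighofer give a version with degree bounds), not by reduction to the scalar case through this lifting. To complete your proof you should replace the lifting step by a direct invocation of the Scherer--Hol theorem, noting that Assumption \ref{archimedian} is exactly its Archimedean-type hypothesis; with that substitution, and weak duality in place of strong duality, your argument becomes the proof in the cited reference.
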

As stated in \cite{henrion2006convergent}, Assumption \ref{archimedian} is not very restrictive, it suffices that one of the constraints of problem \eqref{POP1} is of the form $\rho^2-\rVert \bm{x} \rVert^2 \geq 0$. If this is not the case, it can be added as a redundant constraint by choosing $\rho$ so that it does not affect the solution of problem \eqref{POP1}.
For frame structure problems \eqref{weight_problem} and \eqref{compliance_problem}, the formulations \eqref{compliance_problem_sdp_scalled} and \eqref{weight_problem_sdp_scalled} have a compact feasible set and, in addition, also satisfy Assumption \ref{archimedian} for the convergence of the Lasserre hierarchy \cite[Proposition 4]{tyburec2021global}.  

The finite convergence stated in Theorem \ref{Henrion2006} occurs when the following flatness condition holds \cite{curto1996solution}
\begin{equation}
 \text{Rank}(\bm{M}_k(\bm{y}^*))= \text{Rank}(\bm{M}_{k-d}(\bm{y}^*)),
\label{flatness}
\end{equation}
where $\bm{y}^*$ is a solution to \eqref{lassereSOS} and $d=\max \left\lbrace d_1,\ldots,d_m \right\rbrace$.
Based on this flatness condition, one can extract from \eqref{lassereSOS} $\tau=\text{Rank}(\bm{M}_k(\bm{y}^*))$ distinct global minimizers of problem \eqref{POP1}, using a linear algebra routine described in detail in \cite{henrion2005detecting}. Depending on the nature of the problem, we can also have other types of conditions for finite convergence; see, for instance, the recent work \cite{henrion2023occupation}. Other conditions to certify finite convergence have been developed in \cite{tyburec2021global,tyburec2022global} for frame structure problems. These conditions are briefly presented in the following section.

\subsection{Certificate of global $\varepsilon$-optimality in frame structures optimization}
\label{certificate-global}
In the context of topology optimization of frame structures, computationally simpler condition based on the first-order moments can be used as a certificate of global optimality for \eqref{compliance_problem_sdp_scalled} and \eqref{weight_problem_sdp_scalled}.

\subsubsection{Compliance optimization}

Let $y^*_{c^1}$ and $\bm{y}^*_{\bm{a}^1}$ be the first-order moments associated with variables $c_s$ and $\bm{a}_s$ obtained from a solution to any relaxation of \eqref{compliance_problem_sdp_scalled} using the moment sum of squares hierarchy. Then, 
$$\tilde{{a}}_e=0.5(y_{a_{e}^1}^*+1)\frac{\overline{w}}{\ell_e \rho_e}, ~\forall e \in \lbrace 1, \ldots, n_e\rbrace,$$
$$\tilde{{c}}=\bm{f}^T \bm{K}(\tilde{\bm{a}})^{\dag}\bm{f}$$ gives a feasible upper bound to \eqref{compliance_problem_sdp_scalled} \cite[Theorem 2]{tyburec2021global}.

In addition, $y^*_{c^1}$ and $\bm{y}^*_{\bm{a}^1}$ can be used to construct a sufficient condition for global optimality: if for every $\varepsilon > 0$ there is a relaxation degree $r$ such that \begin{equation}
\tilde{c}^{(r)}- 0.5(y^{(r)*}_{c^1}+1) \leq \varepsilon\,,
    \label{sufficient_condition_convergence}
\end{equation}
then $\tilde{\bm{a}}$, $\tilde{c}$ tend to a global solution to \eqref{compliance_problem_sdp_scalled}.

Condition \eqref{sufficient_condition_convergence} is not a necessary condition. Indeed, in the case of a disconnected set of global minimizers, the optimality gap $\varepsilon$ may remain strictly positive while the flatness condition \eqref{flatness} holds; see \cite[Section 4.2]{tyburec2021global} for an illustration. However, when the optimization problem possesses a convex set of global minimizers, global optimality can always be certified for the relaxation degree $r\rightarrow \infty$ by \cite[Theorem 3]{tyburec2021global}

\begin{equation}
\lim_{r \rightarrow \infty} \tilde{c}^{(r)}- 0.5(y_{c^1}^{(r)*}+1)\overline{c} = 0.
    \label{necessary_condition_convergence}
\end{equation}

We note here that \eqref{necessary_condition_convergence} also holds for the case of infinitely many minimizers within a convex set, for which the rank in the condition \eqref{flatness} does not stabilize.

\subsubsection{Weight optimization}\label{sec:weightUB}

In addition, we consider the case of weight optimization.
Let $\bm{y}^*_{\bm{a}^1}$ be the first-order moments associated with the variables $\bm{a}_s$ obtained from a solution to any relaxation of \eqref{weight_problem_sdp_scalled} using the moment sum of squares hierarchy. If $\mathbf{K}_{0} = \mathbf{0}$ (no prescribed stiffness) or when $\bm{y}^*_{\bm{a}^1} > -\mathbf{1}$ (all cross-section areas are strictly positive), then there exists $\delta\ge 1$ such that
\begin{equation*}
    \tilde{a}_e = \frac{y^*_{{a}_e^1}+1}{2} \frac{\overline{w}}{\rho_e \ell_e} \delta,\ \  \forall e \in \{1,\dots, n_\mathrm{e}\}
\end{equation*}
is a feasible upper-bound solution to \eqref{weight_problem_sdp_scalled}, where the minimal value $\delta^*$ can be found by solving a quasi-convex optimization problem \\$\delta^* = \argmin_\delta \left\{\delta \; \vert \; \delta>0, \bm{f}^T\bm{K}(\delta\tilde{\bm{a}})^\dagger \bm{f} \le \overline{c} \right\}$ globally by a bisection-type algorithm \cite[Proposition 5]{tyburec2022global}.

Thus, we can again state a simple sufficient condition of global $\varepsilon$-optimality \cite[Proposition 6]{tyburec2022global} as
\begin{equation}
    (\delta^*-1) 0.5 \overline{w} (n_\mathrm{e} + \sum_{e=1}^{n_\mathrm{e}} y_{a_e^1}^{*}) \le \varepsilon,
\end{equation}
which approaches zero when the first-order moments are feasible to the original problem, i.e., $\delta^* = 1$.

Similarly to the compliance optimization, also here we have convergence in the limit for optimization problems with a convex set of global minimizers \cite[Theorem 3]{tyburec2022global}:
\begin{equation}
    \lim_{r\rightarrow \infty} (\delta^*-1) 0.5 \overline{w} (n_\mathrm{e} + \sum_{e=1}^{n_\mathrm{e}} y_{a_e^1}^{(r)*}) = 0.
\end{equation}

\begin{example}
Consider the problem \eqref{weight_problem_example} and set $\bm{x}=\begin{pmatrix}
    a_{s,1} & a_{s,2} & a_{s,3}
\end{pmatrix}$. The problem then writes as
\begin{equation}
\begin{aligned}
&  \underset{\bm{x} \in \mathbb{R}^3}{\min} & f(\bm{x})= \frac{3}{8}+\frac{1}{8}(x_1+x_2+x_3),&  \\
& \text{s.t. } &  \bm{G}_1(\bm{x})=\begin{pmatrix}
\frac{75}{2} & -1 \\ -1 & K(\bm{x})
\end{pmatrix} & \succeq 0, \\
& & \bm{G}_2(\bm{x})=1-x_1^2 & \geq 0,\\
&  & \bm{G}_3(\bm{x})=1-x_2^2 & \geq 0,\\
&  & \bm{G}_4(\bm{x})=1-x_3^2 & \geq 0,\\
\end{aligned}
\label{POP-example}
\end{equation}
where $K(\bm{x})=\frac{5}{48}+\frac{1}{48}(x_1+x_2+8x_3)+ \frac{1}{96}(x_1^2+x_2^2+8 x_3^2)$. To ease notations in the example, we will write $K:=K(\bm{x})$
\begin{itemize}
\item For $r=1$, the Lasserre's SOS hierarchy writes as
\begin{mini!}|s|%
{\strut \bm{y} \in \mathbb{R}^{4}}%
 { \frac{3}{8}y_{000}+\frac{1}{8}(y_{100}+y_{010}+y_{001})}%
{\label{SOS-example_1}}{}
\addConstraint{\bm{M}_0(\bm{G}_j\bm{y})}{\succeq 0,~j=1, 2, 3, 4}
\addConstraint{\bm{M}_1(\bm{y})}{\succeq 0}
\addConstraint{y_{000}}{= 1,}
\end{mini!}
where
 $\bm{M}_1(\bm{y})=
\begin{pmatrix}
    y_{00} & y_{10} &  y_{01} \\
    y_{10} & y_{20} &  y_{11} \\
    y_{01} & y_{11} &  y_{02}
\end{pmatrix}, $ 
$  \bm{M}_0(\bm{G}_1\bm{y}) = \begin{pmatrix}
 \frac{75}{2}y_{000} & -y_{000} \\ -y_{000} & \bm{L_y}(K)
 \end{pmatrix},$ and \\$ \bm{M}_0(\bm{G}_2\bm{y})=y_{000}-y_{200}$, $\bm{M}_0(\bm{G}_3\bm{y})=y_{000}-y_{020}$ and $ \bm{M}_0(\bm{G}_4\bm{y})=y_{000}-y_{002}.$ The optimal value of SDP \eqref{SOS-example_1} is $f_1^*=0.02$ (see Figure \ref{fig:rel1dense}) and the flatness condition \eqref{flatness} does not hold. The first-order moments from the relaxation read as $\bm{y}_{\bm{a}^1} = \begin{pmatrix}
    -1 & -1 & -0.84
\end{pmatrix}$. Using the optimality condition from Section \ref{sec:weightUB}, we find the smallest $\delta$ that makes the first-order moments feasible as $2.5\sqrt{2}$. The corresponding feasible cross-section areas evaluate as $\tilde{\bm{a}} = \begin{pmatrix}
    0 & 0 & \sqrt{2}/10
\end{pmatrix}$ and the upper-bound weight thus reads $\sqrt{2}/20$.
\item For $r=2$, Lasserre's mSOS hierarchy reads as
\begin{mini!}|s|%
{\strut \bm{y} \in \mathbb{R}^{10}}%
 {\frac{3}{8}y_{000}+\frac{1}{8}(y_{100}+y_{010}+y_{001})}%
{\label{SOS-example_2}}{}
\addConstraint{\bm{M}_1(\bm{G}_j\bm{y})}{\succeq 0,~j=1, 2, 3, 4}
\addConstraint{\bm{M}_2(\bm{y})}{\succeq 0}
\addConstraint{y_{000}}{= 1,}
\end{mini!}

where 
 $\bm{M}_1(\bm{G}_1\bm{y})= \begin{bmatrix}
\bm{M}_{000}(\bm{G}_1\bm{y}) & \bm{M}_{100}(\bm{G}_1\bm{y}) & \bm{M}_{010}(\bm{G}_1\bm{y}) & \bm{M}_{001}(\bm{G}_1\bm{y}) \\
\bm{M}_{100}(\bm{G}_1\bm{y}) & \bm{M}_{200}(\bm{G}_1\bm{y}) & \bm{M}_{110}(\bm{G}_1\bm{y}) & \bm{M}_{101}(\bm{G}_1\bm{y}) \\
\bm{M}_{010}(\bm{G}_1\bm{y}) & \bm{M}_{110}(\bm{G}_1\bm{y}) & \bm{M}_{020}(\bm{G}_1\bm{y})  & \bm{M}_{011}(\bm{G}_1\bm{y}) \\
\bm{M}_{001}(\bm{G}_1\bm{y}) & \bm{M}_{101}(\bm{G}_1\bm{y}) & \bm{M}_{011}(\bm{G}_1\bm{y})  & \bm{M}_{002}(\bm{G}_1\bm{y}) 
\end{bmatrix},$ \\
with

\begin{align*}
\bm{M}_{000}(\bm{G}_1\bm{y})&=\begin{pmatrix}
\frac{75}{2}y_{000} & -y_{000} \\
-y_{000} & L_{\bm{y}}(K)
 \end{pmatrix}, \\
\bm{M}_{100}(\bm{G}_1\bm{y})&=\begin{pmatrix}
\frac{75}{2}y_{100} & -y_{100} \\
-y_{100} & L_{\bm{y}}(x_1K)
 \end{pmatrix},
\\
 \bm{M}_{010}(\bm{G}_1\bm{y})&=\begin{pmatrix}
\frac{75}{2}y_{010} & -y_{010} \\
-y_{010} & L_{\bm{y}}(x_2K)
 \end{pmatrix}, \cdots
\end{align*}
and $\bm{M}_1(\bm{G}_2\bm{y})=\begin{pmatrix}
   y_{000}-y_{200} &  y_{100}-y_{300} &   y_{010}-y_{210} &   y_{001}-y_{201} \\
  y_{100}-y_{300} &  y_{200}-y_{400} &   y_{110}-y_{310} &   y_{101}-y_{301}\\
   y_{010}-y_{210} &  y_{110}-y_{310} &   y_{020}-y_{220} &   y_{011}-y_{211}\\
  y_{001}-y_{201} &  y_{101}-y_{301} &   y_{011}-y_{211} &   y_{002}-y_{202}
\end{pmatrix}.$ 
The construction of the matrices $ \bm{M}_1(\bm{G}_3\bm{y})$ and $\bm{M}_1(\bm{G}_4\bm{y})$ is similar to $\bm{M}_1(\bm{G}_2\bm{y})$. Here, the flatness condition \eqref{flatness} holds and the optimal value of the SDP \eqref{SOS-example_2} is $f_2^*=0.0707 \approx \frac{\sqrt{2}}{20}=f^*$, see Figure \ref{fig:rel2dense}. For the optimality certificate in Section \ref{sec:weightUB}, we take the first-order moments $\bm{y}_{\bm{a}^1} = \begin{pmatrix}
    -1 & -1 & 0.4\sqrt{2}-1
\end{pmatrix}$
and find that they are feasible to the optimization problem, i.e., $\delta^* = 1$. Consequently, we have the optimal solution $\bm{a}^* = \begin{pmatrix}
    0 & 0 & \sqrt{2}/10
\end{pmatrix}$.
\end{itemize}
\begin{figure}[!htbp]
    \subfloat[\label{fig:rel1dense}]{%
      \includegraphics[width=0.475\linewidth]{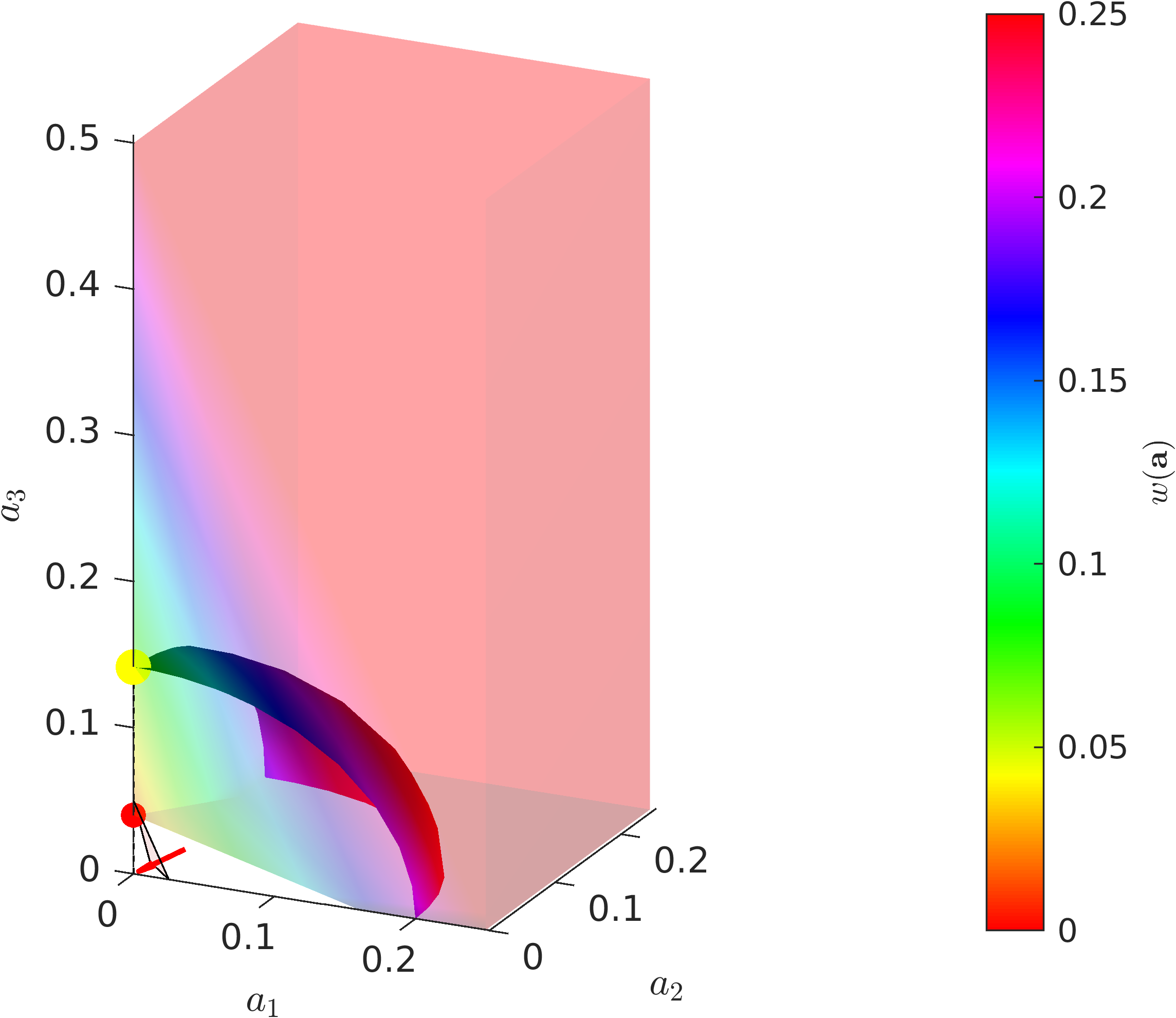}
    }%
    \hfill%
    \subfloat[\label{fig:rel2dense}]{%
      \includegraphics[width=0.475\linewidth]{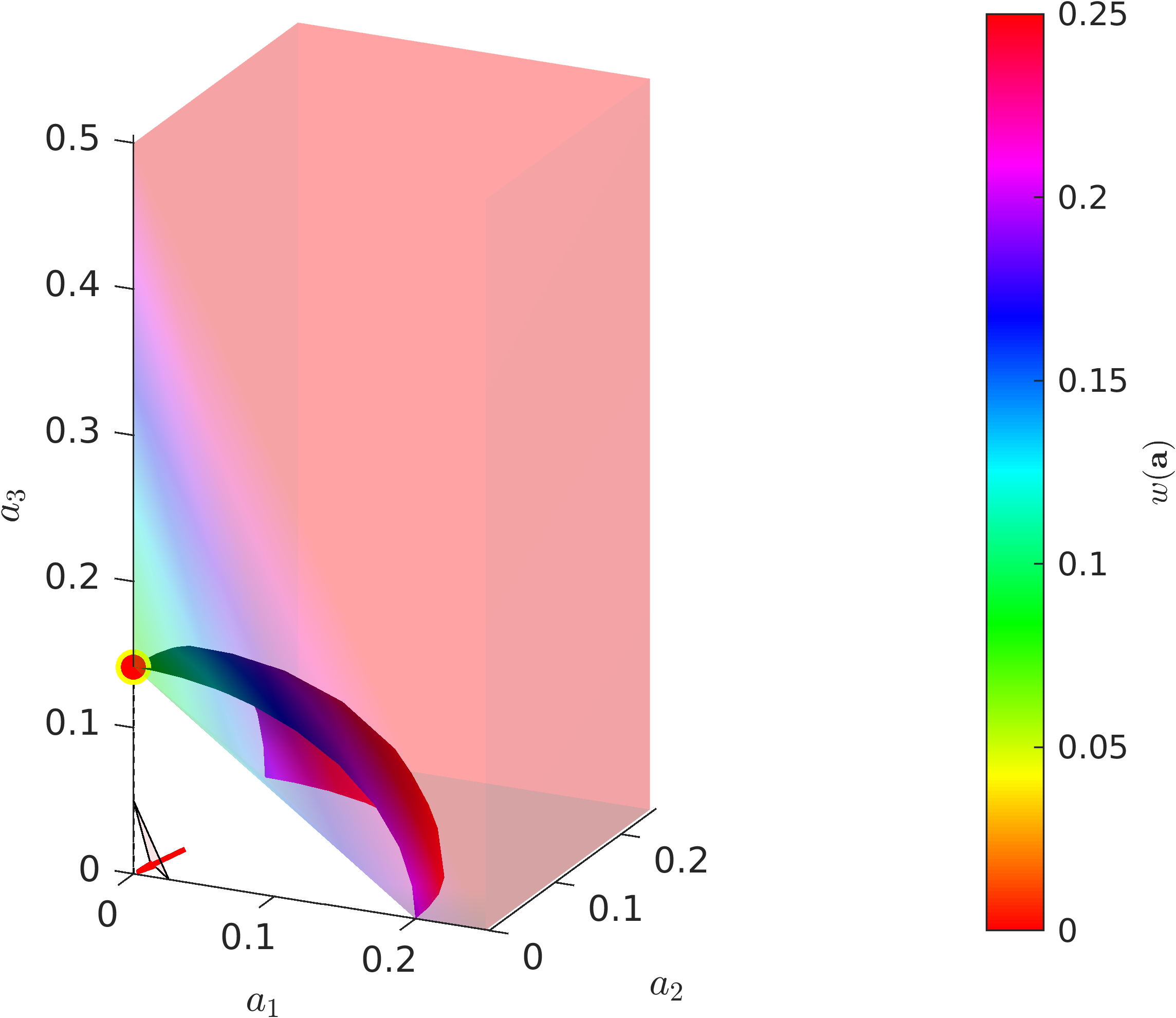}
    }
    \caption{Feasible set of the first-order moments for (a) $r=1$ and (b) $r=2$. The red arrow, which is a normal vector of the triangular patch in the lower left corner, indicates the direction of minimization. The scattered red points denote the relaxation lower bounds and the yellow points the reconstructed upper bounds based on Section \ref{sec:weightUB}. The boundaries of the feasible set are drawn as solid, whereas the relaxation feasible set is drawn as a transparent surface, its color denoting the weight $w(\bm{a})$.} \label{fig:dense}
\end{figure}
\end{example}

\section{Exploiting term sparsity in the mSOS }

The polynomials defining a POP can be sparse in terms of monomials, i.e., involve fewer monomial terms in the standard basis $\bm{b}_d(\bm{x})$; recall Example~\ref{example_basis}. The \emph{term sparsity pattern} (TSP) technique consists of exploiting this fact to construct sparse moment and localizing matrices introduced in Definition~\ref{def-moment-matrix}. Each of these sparse matrices can be decomposed into smaller matrices using the results presented in the following section, and thus provide computational and time savings. 

In this section, we describe the TSP method for polynomial matrix inequalities. The developments are inspired by and extend the results of \cite{wang2020chordal,wang2021tssos,magron2023sparse,magron2021tssos,wang2022cs}.

\subsection{Graphs and sparse positive semidefinite matrices }
\label{sec:graphs-psd}
In this part, we recall the basic notions of graph theory and its relation to positive semidefinite (PSD) matrices. These notions are crucial for the decomposition of sparse PSD matrices, but also for exploiting the term sparsity. 

An \emph{undirected graph}\footnote{An undirected graph is also called a graph.} $ \mathcal{G}(V,E)$ is a set of nodes $V$ and a set of edges $E$ between these nodes, that is, $E \subseteq \left\lbrace \lbrace v_i,v_j \rbrace: (v_i,v_j) \in V \times V \right\rbrace$. An edge in $E$ that connects a node to itself is called a \emph{loop}. For a given graph $\mathcal{G}$, we use the notation $V(\mathcal{G})$ (reps. $E(\mathcal{G})$) to indicate the set of nodes (resp. edges) of $\mathcal{G}$. A \emph{cycle of length $p$} is a set of consecutive distinct nodes $\lbrace v_1, \ldots, v_p\rbrace \subseteq V$ such that $\forall i \in \{1,\ldots, p-1\}: \lbrace v_i, v_{i+1} \rbrace \in E$ and $\lbrace v_1, v_p \rbrace \in E$. Given a cycle, a \emph{chord} is an edge between two nonconsecutive nodes in that cycle. 

If $\mathcal{G}$ and $\mathcal{H}$ are two graphs, we say that $\mathcal{G}$ is a \emph{subgraph} of $\mathcal{H}$ which we denote as $\mathcal{G} \subseteq \mathcal{H}$, if $V(\mathcal{G}) \subseteq V(\mathcal{H}) $ and $E(\mathcal{G}) \subseteq E(\mathcal{H}) $. A \emph{connected graph} is a graph in which for every two vertices $v_i$ and $v_j$ there exists a set of distinct vertices $v_i, v_{i+1},\dots, v_j$ such that $(v_i,v_{i+1}) \in E(\mathcal{G})$. A \emph{connected component} in a graph is a connected subgraph that is not part of any larger connected subgraph. A graph is called \emph{complete} if every two distinct nodes are connected by an edge. 

A graph is \emph{chordal} if all its cycles of the length at least $4$ have a chord. For example, the graph in Figure~\ref{example_chordal_graph}(a) is not chordal because the cycle $1-2-3-4-5$ does not contain any chord. If a graph $\mathcal{G}(V,E)$ is not chordal, it can be extended to a chordal graph by adding appropriate edges to $E$, e.g., Figure \ref{example_chordal_graph}(b) and (c). This operation is called \emph{chordal extension} (or triangulation) and will be denoted by $\overline{\mathcal{G}}$ in this paper. A chordal extension is not unique in general. We call the chordal extension maximal (resp. minimal) if it is constructed by adding the maximum (resp. minimum) number of edges in order to make the graph chordal. The minimum chordal extension of a graph is generally an NP-hard problem, commonly tackled using heuristic algorithms (see, for example, \cite{heggernes2006minimal,amestoy2004algorithm,silva2023biased}).

\begin{figure}[h!]

\subfloat[]{\begin{tikzpicture}[node distance={15mm}, thick, main/.style = {draw, circle}, auto]
\node[main]  (1) {$1$}; 
\node[main] [below right of=1] (2) {$2$}; 
\node[main] [ below left of=1] (5) {$5$}; 
\node[main] [ below of=2] (3) {$3$}; 
\node[main] [ below of=5](4) {$4$}; 
\node[main] [ right of=2](6) {$6$}; 
\node[main] [ below of=6](7) {$7$}; 
\node[main] [ above right of=7](8) {$8$}; 
\draw (1) -- (2);
\draw (2) -- (3);
\draw (3) -- (4);
\draw (4) -- (5);
\draw (5) -- (1);
\draw (6) -- (7);
\draw (7) -- (8);
\end{tikzpicture}  }
\hspace{0.5cm}
\subfloat[]{\begin{tikzpicture}[node distance={15mm}, thick, main/.style = {draw, circle}, auto]
\node[main]  (1) {$1$}; 
\node[main] [below right of=1] (2) {$2$}; 
\node[main] [ below left of=1] (5) {$5$}; 
\node[main] [ below of=2] (3) {$3$}; 
\node[main] [ below of=5](4) {$4$}; 
\node[main] [ right of=2](6) {$6$}; 
\node[main] [ below of=6](7) {$7$}; 
\node[main] [ above right of=7](8) {$8$}; 
\draw (1) -- (2);
\draw (2) -- (3);
\draw (3) -- (4);
\draw (4) -- (5);
\draw (5) -- (1);
\draw (6) -- (7);
\draw (7) -- (8);
\draw[blue,dashed] (1) -- (3);
\draw[blue,dashed] (1) -- (4);
\draw[blue,dashed] (2) -- (4);
\draw[blue,dashed] (2) -- (5);
\draw[blue,dashed] (3) -- (5);
\draw[blue,dashed] (6) -- (8);
\end{tikzpicture}}
\\ \centering \subfloat[]{
  \begin{tikzpicture}[node distance={15mm}, thick, main/.style = {draw, circle}, auto]
\node[main]  (1) {$1$}; 
\node[main] [below right of=1] (2) {$2$}; 
\node[main] [ below left of=1] (5) {$5$}; 
\node[main] [ below of=2] (3) {$3$}; 
\node[main] [ below of=5](4) {$4$}; 
\node[main] [ right of=2](6) {$6$}; 
\node[main] [ below of=6](7) {$7$}; 
\node[main] [ above right of=7](8) {$8$}; 
\draw (1) -- (2);
\draw (2) -- (3);
\draw (3) -- (4);
\draw (4) -- (5);
\draw (5) -- (1);
\draw (6) -- (7);
\draw (7) -- (8);
\draw[blue,dashed] (4) -- (1);
\draw[blue,dashed] (4) -- (2);
\end{tikzpicture}}
\caption{(a) Example of a non-chordal graph with two connected components, its (b) maximal 
 and (c) minimal chordal extensions.}
\label{example_chordal_graph}
\end{figure}
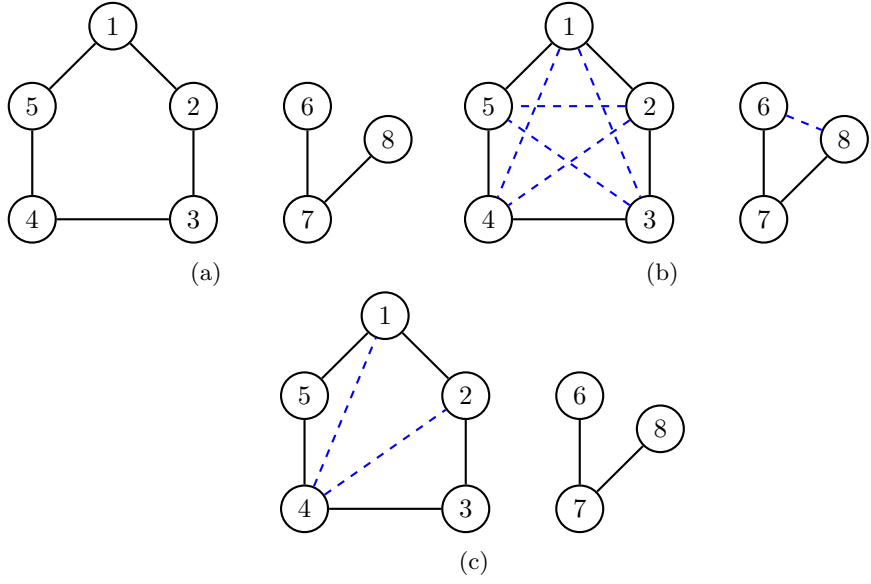

A \emph{clique} $C$ of $\mathcal{G}$ is a subset of nodes $C \subseteq V$ such that any two nodes in $C$ are related by an edge, i.e., $\forall v_i,v_j \in E$, $\lbrace v_i,v_j \rbrace \in E$, and thus $C$ is a complete subgraph. We say that a clique $C$ is \emph{maximal} if it is not a subset of any other clique. For example, the maximal cliques of the graph in Figure \ref{example_chordal_graph}(b) are $C_1=\begin{pmatrix} 1 & 2 & 3 & 4 & 5 \end{pmatrix}$ and $C_2=\begin{pmatrix} 6 & 7 & 8\end{pmatrix}$, but the clique $\begin{pmatrix}1 & 2 & 3\end{pmatrix}$ is not maximal because it is contained in $C_1$. Similarly, the maximal cliques of the graph in Figure \ref{example_chordal_graph}(c) are  $C_1=\begin{pmatrix} 1 & 4 & 5\end{pmatrix}$, $C_2=\begin{pmatrix} 1 & 2 & 4\end{pmatrix}$, $C_3=\begin{pmatrix} 2 & 3 & 4\end{pmatrix}$, $C_4=\begin{pmatrix} 6 & 7\end{pmatrix}$ and $C_5=\begin{pmatrix} 7 & 8 \end{pmatrix}$.

Let $\mathcal{G}(V,E)$ be a graph and $\bm{M}$ be a symmetric matrix whose rows and columns are indexed by $V$. We say that $\bm{M}$ has a \emph{sparsity pattern} $\mathcal{G}$ if $M_{v_{i}v_{j}}=M_{v_{j}v_{i}}=0$ whenever $\lbrace v_i,v_j \rbrace \notin E$. In the sequel, we denote a submatrix of $\bm{M}$ whose rows and columns are indexed by a subset $C \subseteq V$ as $\bm{M}_{C}$. 
The following theorem allows decomposing a sparse PSD matrix that has a chordal sparsity graph into smaller PSD matrices using its maximal cliques. 

\begin{theorem}\cite[Theorem 9.2, Theorem 10.1]{andersen2015chordal}
\label{psd-matrix-decomposition}
Let $\mathcal{G}(V,E)$ be a chordal graph and let $C_1, \ldots, C_s$ be the maximal cliques of $\mathcal{G}$. Let $\bm{M}$ be a symmetric matrix whose sparsity pattern is $\mathcal{G}$. Then $\bm{M} \succeq 0$ if and only if $\bm{M}_{C_{k}} \succeq 0$,  $\forall k \in \lbrace 1, \ldots, s \rbrace$.
\label{matrix_decomposition}
\end{theorem}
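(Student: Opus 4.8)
The plan is to treat the two implications separately, with the forward one immediate and the converse carrying all the content. For the forward implication no chordality is needed: if $\bm{M}\succeq 0$, then for each maximal clique $C_k$ the submatrix $\bm{M}_{C_k}$ is obtained by deleting the rows and columns indexed by $V\setminus C_k$, so it is a principal submatrix of a positive semidefinite matrix and hence $\bm{M}_{C_k}\succeq 0$. All the work is in the converse, which is the positive semidefinite \emph{completion} statement: assuming every $\bm{M}_{C_k}\succeq 0$, one must supply values for the entries of $\bm{M}$ that are not prescribed by the pattern (those indexed by $\{v_i,v_j\}\notin E$, which in the moment-matrix application are free) so that the resulting full matrix is PSD. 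Here chordality is essential, and my strategy is an induction that adds one clique at a time along a clique tree.

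First I would invoke the standard structural consequence of chordality: the maximal cliques $C_1,\dots,C_s$ admit an ordering along a clique tree enjoying the running intersection property (equivalently, $V$ has a perfect elimination ordering). Concretely, I order them so that for each $k\ge 2$ the separator $S_k := C_k\cap(C_1\cup\dots\cup C_{k-1})$ is contained in a single earlier clique, while the residual $R_k := C_k\setminus S_k$ meets no earlier clique. The crucial payoff is that, in the sparsity pattern $\mathcal{G}$, the new nodes $R_k$ are adjacent only to nodes of $C_k$; hence every entry of $\bm{M}$ coupling $R_k$ to previously processed nodes outside $S_k$ is unspecified and free to be chosen.

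Then I would run the induction. Writing $V_{k-1}=C_1\cup\dots\cup C_{k-1}$ and assuming the block $\bm{M}_{V_{k-1}}$ has already been completed to a PSD matrix, I append the rows and columns indexed by $R_k$. Only the coupling of $R_k$ with the separator $S_k\subseteq V_{k-1}$ is prescribed (it lies inside $\bm{M}_{C_k}$); the remaining coupling $\bm{M}_{R_k,\,V_{k-1}\setminus S_k}$ is free. I would fill the whole block by $\bm{B} = \bm{M}_{V_{k-1},S_k}\,(\bm{M}_{S_kS_k})^{\dagger}\,\bm{M}_{S_kR_k}$, chosen so that its rows indexed by $S_k$ reproduce the prescribed $\bm{M}_{S_kR_k}$ (using $\mathrm{range}(\bm{M}_{S_kR_k})\subseteq\mathrm{range}(\bm{M}_{S_kS_k})$, which follows from $\bm{M}_{C_k}\succeq 0$) and its columns lie in $\mathrm{range}(\bm{M}_{V_{k-1}})$. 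A short pseudoinverse computation then shows that the Schur complement of $\bm{M}_{V_{k-1}}$ in the enlarged matrix collapses to $\bm{M}_{R_kR_k}-\bm{M}_{R_kS_k}(\bm{M}_{S_kS_k})^{\dagger}\bm{M}_{S_kR_k}$, i.e.\ the Schur complement of $\bm{M}_{S_kS_k}$ in $\bm{M}_{C_k}$, which is PSD precisely because $\bm{M}_{C_k}\succeq 0$. By the Schur-complement characterization of semidefiniteness the enlarged block is PSD, and after the last clique the whole of $\bm{M}$ is a PSD completion.

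The main obstacle is exactly this converse: manufacturing a single global PSD matrix out of purely local, clique-wise certificates. Its resolution rests entirely on the running intersection property, which ensures that each newly introduced block of nodes touches the already-built matrix only through one separator $S_k$, so the one-step Schur-complement completion never has to reconcile conflicting prescribed entries. Equivalently, one could prove the converse via the dual Agler-type decomposition of the chordal sparse PSD cone into a sum of clique-supported PSD matrices; the two routes are dual, and both break for non-chordal $\mathcal{G}$, where a single missing chord already destroys the one-step completion.
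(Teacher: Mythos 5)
The paper offers no proof of this statement---it is imported with a citation to Andersen and Vandenberghe---so the only meaningful comparison is with the proof in that reference, and your argument is essentially that proof: the easy direction via principal submatrices, and the converse as a positive semidefinite completion assembled clique-by-clique along a clique tree with the running intersection property, using the generalized Schur complement together with the range inclusion $\mathrm{range}(\bm{M}_{S_k R_k})\subseteq\mathrm{range}(\bm{M}_{S_k S_k})$, with your (correct, if briefly justified) observation that every edge from the residual $R_k$ into the already-processed vertices lands inside the separator $S_k$. Your reading of the statement is also the right one: taken literally, with all off-pattern entries fixed to zero, the ``if'' direction fails already for the chordal path $1$--$2$--$3$ with unit diagonal and unit entries on the two edges (the clique blocks are PSD but the full matrix has negative determinant), so the theorem must be---and in the relaxation \eqref{TS_lassereSOS} indeed is---applied with the non-edge entries treated as free, i.e., as the Grone--Johnson--S\'a--Wolkowicz completion result, exactly as you set it up.
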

For a more detailed study on the chordal graphs and their relation to PSD matrices, the reader can consult \cite{andersen2015chordal}.

\begin{example}
    Let $\bm{M}$ be a matrix associated with sparsity pattern of the graph in Figure \ref{example_chordal_graph}(c). Then, we have 

     \[
\begin{array}{c c}
   \quad \quad ~ \color{red} 1   ~~ \color{red} 2 ~~~    \color{red} 3   ~~ \color{red} 4    ~~ \color{red} 5 ~~      \color{red} 6 ~~ \color{red} 7 ~~   \color{red} 8 \\
\bm{M}=
\begin{pmatrix}
   \times & \times & 0 & \times & \times & 0 & 0 & 0 \\ 
      \times & \times & \times & \times & 0 & 0 & 0 & 0 \\ 
     0 & \times & \times & \times & 0 & 0 & 0 & 0 \\ 
      \times & \times & \times & \times & \times & 0 & 0 & 0 \\ 
      \times & 0 & 0 & \times & \times & 0 & 0 & 0 \\ 
     0 & 0 & 0 & 0 & 0 & \times & \times  & 0 \\ 
      0 & 0 & 0 & 0 & 0 & \times & \times  & \times \\ 
    0 & 0 & 0 & 0 & 0 & 0 & \times  & \times 
\end{pmatrix} &
\begin{matrix}
   \color{red} 1 \\
   \color{red} 2 \\
   \color{red} 3 \\
    \color{red} 4 \\
   \color{red} 5 \\
   \color{red} 6 \\
   \color{red} 7 \\
   \color{red} 8
\end{matrix}
\end{array}.
\]
Requiring $\bm{M} \succeq 0 $ is equivalent to $\bm{M}_{C_i} \succeq 0$ $\forall i \in \lbrace 1, 2, 3, 4, 5 \rbrace $, where 
   \[
\begin{array}{c c}
   \quad \quad \quad ~ \color{red} 1   ~~ \color{red} 4 ~~    \color{red} 5   \\
\bm{M}_{C_{1}}=
\begin{pmatrix}
   \times & \times & \times  \\ 
 \times & \times & \times  \\ 
   \times & \times & \times   
\end{pmatrix} &
\begin{matrix}
   \color{red} 1 \\
   \color{red} 4 \\
   \color{red} 5 
\end{matrix}
\end{array}
\begin{array}{c c}
   \quad \quad \quad ~ \color{red} 1   ~~ \color{red} 2 ~~    \color{red} 4   \\
\bm{M}_{C_{2}}=
\begin{pmatrix}
   \times & \times & \times  \\ 
 \times & \times & \times  \\ 
   \times & \times & \times   
\end{pmatrix} &
\begin{matrix}
   \color{red} 1 \\
   \color{red} 2 \\
   \color{red} 4 
\end{matrix}
\end{array} \begin{array}{c c}
   \quad \quad \quad ~ \color{red} 2   ~~ \color{red} 3 ~~    \color{red} 4   \\
\bm{M}_{C_{3}}=
\begin{pmatrix}
   \times & \times & \times  \\ 
 \times & \times & \times  \\ 
   \times & \times & \times   
\end{pmatrix} &
\begin{matrix}
   \color{red} 2 \\
   \color{red} 3 \\
   \color{red} 4 
\end{matrix}
\end{array}
\]

$$ \begin{array}{c c}
   \quad \quad \quad ~ \color{red} 6   ~~ \color{red} 7  \\
\bm{M}_{C_{4}}=
\begin{pmatrix}
   \times & \times \\
 \times & \times  
\end{pmatrix} &
\begin{matrix}
   \color{red} 6 \\
   \color{red} 7
\end{matrix}
\end{array} \begin{array}{c c}
   \quad \quad \quad ~ \color{red} 7   ~~ \color{red} 8 \\
\bm{M}_{C_{5}}=
\begin{pmatrix}
   \times & \times \\
 \times & \times 
\end{pmatrix} &
\begin{matrix}
   \color{red} 7 \\
   \color{red} 8
\end{matrix}
\end{array} $$

\end{example}

In order to efficiently exploit term sparsity of a POP using graphs, we represent the polynomials defining the POP by their support. In the next subsection, we recall the definition of the support of a polynomial and generalize it to the matrix polynomial case.  

\subsection{Support of a polynomial}

The \emph{support of a polynomial} is the set of monomial exponents defining the polynomial. More concretely, for $\bm{x} \in \mathbb{R}^n$ and $f(\bm{x})=\sum_{\bm{\alpha} \in \mathbb{N}^n_r}f_{\bm{\alpha}}\bm{x^\alpha}$, the support of $f$ is defined as $$\text{supp}\left(f\right):= \lbrace \bm{\alpha} \in \mathbb{N}^n_r: f_{\bm{\alpha}} \neq 0\rbrace. $$
For a better understanding, we will abuse notation in the sequel and refer to the support of a polynomial as 
\begin{equation}
\text{supp}\left(f\right)= \lbrace \bm{x^\alpha} \in \bm{b}_r(\bm{x}): f_{\bm{\alpha}} \neq 0\rbrace. 
\label{support_function}
\end{equation}

Now, given a polynomial matrix $\bm{G}_j(\bm{x})=\sum_{\alpha \in \mathbb{N}^n_r}\bm{G}_{j,\bm{\alpha}}\bm{x^\alpha}$, $j \in \lbrace 1,\ldots, m \rbrace$ and where $\bm{G}_{j,\bm{\alpha}} \in \mathbb{S}^{{s_j}\times {s_j}} $, the previous definition generalizes naturally to this case as 
\begin{equation}
\text{supp}\left(\bm{G}_j\right)= \lbrace \bm{x^\alpha} \in \bm{b}_r(\bm{x}): \bm{G}_{j,\bm{\alpha}} \neq \bm{0}_{\mathbb{R}^{{s_j}\times {s_j}}}\rbrace. 
\label{support_matrix}
\end{equation}

The definitions \eqref{support_function} and \eqref{support_matrix} are unchanged and lead to the same result if we replace the standard monomial basis $\bm{b}_r(\bm{x})$ by any other smaller monomial basis, which we will do in Section \ref{sec:reducedbasis}.

\subsection{Term sparsity procedure}
\label{sec:mainTSP}
Let $r$ be a fixed-degree relaxation of Problem \eqref{POP1}. We start by defining the set of initial support as 
\begin{equation}
\mathcal{S}:=\text{supp}(f) \cup \bigcup_{j=1}^{m} \text{supp}(\bm{G}_j) \cup \bm{b}_r(\bm{x}^2),
\label{initial_support}
\end{equation}
where\footnote{The set $\bm{b}_r(\bm{x}^2)$ is added to the initial support defined in \eqref{initial_support} to secure that the diagonal entries of the \textit{a priori} sparse moment matrix do not vanish.} $\bm{b}_r(\bm{x}^2)$ is the vector of squares of each element in $\bm{b}_r(\bm{x})$.
Let $\mathcal{G}^{\text{tsp}}$ be a graph with $V\left( \mathcal{G}^{\text{tsp}}\right)=\bm{b}_r(\bm{x})$ and 
\begin{equation}
E\left(\mathcal{G}^{\text{tsp}}\right):= \Big\lbrace \left\lbrace \bm{x^\alpha}, \bm{x^\beta} \right\rbrace: \bm{x}^{\bm{\alpha} + \bm{\beta}} \in \mathcal{S} \Big\rbrace.
    \label{TSP_graph}
\end{equation}
In other words, we add an edge between two monomials in the basis $\bm{b}_r(\bm{x})$ if their product is equal to a monomial involved in the expression of polynomials defining Problem \eqref{POP1}. This graph is called the TSP graph of the problem and schematizes the link between the monomial basis and the monomial terms used to build \eqref{POP1}: the fewer edges it contains, the sparser is the problem. In general, as long as this graph is not complete, one can still exploit term sparsity. 

Now, let $ \bm{G}_0=1$ and $d_0=0$. We define a recursive sequence of graph embeddings $\mathcal{G}_{r,j}^{(k)}$ indexed by the sparsity order $k \in \mathbb{N}_{0}$, relaxation order $r$, and $j \in \lbrace 0, \ldots, m \rbrace$. We start with $\mathcal{G}_{r,0}^{(0)}=\mathcal{G}^{\text{tsp}}$, $ \mathcal{S}^{(0)}=\mathcal{S}$ and $ \mathcal{G}_{r,j}^{(0)} = \varnothing$, $ \forall j \in \lbrace 1, \ldots, m \rbrace$. For $k \geq 1$ and for each $j \in  \lbrace 0, \ldots, m \rbrace$, we perform two successive operations:

\begin{enumerate}
\item \textbf{Support extension.} Define the graph $\mathcal{F}_{r,j}^{(k)}$ with $V \left( \mathcal{F}_{r,j}^{(k)}\right)=\bm{b}_{r-d_{j}}(\bm{x})$ and

 \begin{equation}
E \left( \mathcal{F}_{r,j}^{(k)}\right):= \Big\lbrace  \lbrace \bm{x^\alpha}, \bm{x^\beta}\rbrace: \exists \bm{x^\gamma} \in  \text{supp}(\bm{G}_j) \text{ s.t } \bm{x}^{\bm{\alpha}+\bm{\beta}+\bm{\gamma}}  \in \mathcal{S}^{(k-1)}   \Big\rbrace
\label{support_extention}
\end{equation}
and the set $\mathcal{S}^{(k-1)}$ is defined by \eqref{extended_support} for $k \geq 2$.

\item \textbf{Chordal extension.} Set the next iteration by computing a chordal extension \begin{equation}
\mathcal{G}_{r,j}^{(k)}=\overline{\mathcal{F}_{r,j}^{(k)}}
\label{chordal_extension}
\end{equation} 
and updating the support
\begin{equation}
\mathcal{S}^{(k)}:= \bigcup_{j=0}^{m} \Big\lbrace  \bm{x}^{\bm{\alpha}+\bm{\beta}+\bm{\gamma}}: \bm{x^\gamma} \in  \text{supp}(G_j) \text{ and }  \lbrace \bm{x^\alpha}, \bm{x^\beta}\rbrace \in E \left( \mathcal{G}_{r,j}^{(k)}\right) \Big\rbrace.
\label{extended_support}
\end{equation}
\end{enumerate}

The support extension operation defined in \eqref{support_extention} is related to the way we build the moment matrix and the localizing matrices shown in Section \ref{sec:preliminaries}. This definition generalizes the operation \eqref{TSP_graph} to handle also the graphs $\mathcal{G}_{r,j}^{(k)}$ for $j \in  \lbrace 1, \ldots, m \rbrace$. When $j=0$, the support extension step means that, if a product of two monomials is equal to a monomial in $\mathcal{S}^{(k)}$, we add an edge between them, as in \eqref{TSP_graph}. When $j \in  \lbrace 1, \ldots, m \rbrace$, it means that if a product of two monomials from the basis by a monomial in $\text{supp}(\bm{G}_j)$ is equal to a monomial in $\mathcal{S}^{(k)}$, we add an edge.  

At each step $k$, the set defined in \eqref{extended_support} represents the accumulated monomial terms involved in each entry of the \textit{a priori} sparse moment and localizing matrix, before applying the Riesz operator as in Definition \ref{def-moment-matrix}.

For each $j \in \lbrace 0, \ldots, m \rbrace$, we obtain a sequence of embedded graphs $\mathcal{G}^{(k)}_{r,j} \subseteq \mathcal{G}_{r,j}^{(k+1)}$, $k=0,1,\ldots$ that always stabilizes (settles into a fixed value) after a few iterations. Indeed, in the worst case, this sequence leads to a complete graph, which corresponds to the dense relaxation hierarchy \eqref{lassereSOS}.

Now, for a fixed sparsity order $k$ and a relaxation order $r$, let $\left\lbrace C_{r,j,c}^{(k)} \right\rbrace_{c=1}^{\rho_{d,j}}$ be maximal cliques of $\mathcal{G}_{r,j}^{(k)}$ for each $j \in \lbrace 0, \ldots, m \rbrace$. Then, the term-sparse mSOS for \eqref{lassereSOS} reads as
\begin{equation}
\begin{aligned}
  f_{r}^{(k)}=&\underset{\bm{y} \in \mathbb{R}^{h}}{\inf}  L_{\bm{y}}\left(f \right),  \\
 \text{s.t. }  & \left[\bm{M}_{r-d_j}(\bm{G}_j \bm{y})\right]_{C_{d,j,c}^{(k)}} \succeq 0,  ~\forall c \in \lbrace 1 , \ldots, \rho_{d,j} \rbrace,~ \forall j \in \lbrace 1,\ldots, m \rbrace,\\
&   \left[\bm{M}_r(\bm{y})\right]_{C_{d,0,c}^{(k)}} \succeq 0, ~ \forall c \in \lbrace 1, \ldots, \rho_{d,0} \rbrace, \\
&   y_{\bm{0}}=1,   
\end{aligned}
\label{TS_lassereSOS}
\end{equation}
where $h \leq \vert \bm{b}_{2r}(\bm{x}) \vert$.
By solving \eqref{TS_lassereSOS} at each relaxation order $r$ and at each sparsity order $k$, we obtain a bidirectional hierarchy of lower bounds, i.e., for $\tilde{r} \geq r$, we have 
 \begin{small}
 \begin{equation}
  \begin{aligned}
 f_r^{(1)} & \leq  &  f_r^{(2)} & \leq & \ldots  & \leq &  f_r \\
  \rotatebox[origin=c]{90}{$\geq$} & &  \rotatebox[origin=c]{90}{$\geq$} & & & &  \rotatebox[origin=c]{90}{$\geq$} \\
   f_{r+1}^{(1)} & \leq  &  f_{r+1}^{(2)} & \leq & \ldots  & \leq &  f_{r+1} \\
    \rotatebox[origin=c]{90}{$\geq$} & &  \rotatebox[origin=c]{90}{$\geq$} & & & &  \rotatebox[origin=c]{90}{$\geq$} \\
      \vdots & &  \vdots  & &\vdots  & & \vdots   \\ 
     f_{\tilde{r}}^{(1)} & \leq  &  f_{\tilde{r}}^{(2)} & \leq & \ldots  & \leq &  f_{\tilde{r}} \\    
         \rotatebox[origin=c]{90}{$\geq$} & &  \rotatebox[origin=c]{90}{$\geq$} & & & &  \rotatebox[origin=c]{90}{$\geq$} \\
      \vdots & &  \vdots  & &\vdots  & & \vdots       
 \end{aligned}
 \label{hierarchy_TSSOS}
 \end{equation}
 \end{small}

Indeed, at each fixed relaxation $r$ and a fixed sparsity order $k$, we solve a ``relaxation of a relaxation'', because removing entries (components of the moment vector) in the problem (\ref{TS_lassereSOS}) leads to a smaller feasible set than in (\ref{lassereSOS}). By letting $r$ fixed and varying $k$, the feasible sets of (\ref{TS_lassereSOS}) become tighter as the sparsity graphs become denser.

Because of \eqref{hierarchy_TSSOS}, it may be beneficial to perform successive TSP operations until the graph $\mathcal{G}^{(k)}_{r,j}$ stabilizes and then solve \eqref{TS_lassereSOS}. However, the graph $\mathcal{G}^{(k)}_{r,j}$ can be complete after few iterations and, therefore, \eqref{TS_lassereSOS} is no longer sparse (see Figures \ref{figure-TSP-maximalchordalextension-graph-example} and \ref{figure-TSP-minimalchordalextension-graph-example}).

The quality of lower bounds and the sparsity structure necessarily depend on the choice of the chordal extension algorithm. The chordal extension is used mainly to allow the decomposition through cliques stated in Section \ref{sec:graphs-psd}. The extension can be approximately minimal \cite{wang2020chordal} or maximal (block completion) \cite{wang2021tssos}; but we can also use any other kind of chordal extension. However, the chosen type of chordal extension will influence:
\begin{itemize}
\item The computational complexity of the sparse problem \eqref{TS_lassereSOS}. The approximately minimal chordal extension produces cliques smaller than the maximal chordal extension; see \cite{wang2020chordal} for numerical examples comparing both strategies in the scalar polynomial case. 

\item The gap between $f_{r}^{(k)}$ and $f_r$ in each fixed relaxation order $r$. If the maximal chordal extension is used, then the convergence of the sequence $(f_{r}^{(k)})_k$ to $f_r$ at each relaxation order $r$ is guaranteed \cite{wang2021tssos}. This result does not hold for approximately minimal chordal extension even if numerical convergence occurs in many cases, as shown in \cite{wang2020chordal}.
\end{itemize}

\subsection{Term sparsity in the optimization of frame structures }
Finally, we describe the term sparsity technique when applied to the frame structures problems as introduced in Section \ref{sec:frame_structures}. To make the exposition short, we focus on the compliance minimization problem since it has more variables than the weight minimization problem but the same polynomial structure. We denote by $\mathcal{S}_{\eqref{compliance_problem_sdp_scalled}}$ the support of \eqref{compliance_problem_sdp_scalled}, i.e., the union of the support of its objective function and of all its constraints. To simplify the notation in the sequel, we set $\bm{x}=(\bm{a}_s, c_\mathrm{s}) \in \mathbb{R}^n$ where $n=n_{\bm{a}}+1$. Let $\bm{K}$ be the stiffness matrix defined in \eqref{stifness_matrix}. We consider two cases:
\begin{enumerate}
    \item if $\forall e$: $\bm{K}_e^{(3)}=\bm{0}$, the support reads as
    \begin{equation}
            \mathcal{S}_{\eqref{compliance_problem_sdp_scalled}}= \left\lbrace 1, x_1,\ldots,x_n,x_1^2,\ldots, x_n^2 \right\rbrace;
            \label{support_compliance_K3=0}
    \end{equation}

      \item if $\forall e$: $\bm{K}_e^{(3)}\neq \bm{0}$, the support reads as \begin{equation}
          \mathcal{S}_{\eqref{compliance_problem_sdp_scalled}}= \left\lbrace 1, x_1,\ldots,x_n,x_1^2,\ldots, x_n^2,x_1^3,\ldots, x_n^3 \right\rbrace.
          \label{support_compliance_K3!=0}
      \end{equation}
\end{enumerate} 
As a first observation, all polynomials that define the problem \eqref{compliance_problem_sdp_scalled} are separable, i.e., they can be written as a sum of univariate polynomials. This makes the polynomials dependent on non-mixed monomial terms\footnote{In other words, monomials of the form $x_1^{\alpha_{1}}x_2^{\alpha_{2}} \ldots x_n^{\alpha_{n}}$ where only one exponent is not equal to zero.} only. Therefore, \eqref{compliance_problem_sdp_scalled} is very sparse in the perspective of monomial terms, see Figure \ref{figure-TSP-supportextension-graph-example}. Consequently, we have $$\vert \mathcal{S}_{\eqref{compliance_problem_sdp_scalled}} \vert = nd+1 < \vert \bm{b}_d(\bm{x}) \vert = \frac{(n+d)!}{n!d!},  $$
where $d=2$ in the case of \eqref{support_compliance_K3=0} and $d=3$ in the case of \eqref{support_compliance_K3!=0}. This special polynomial structure inspired us to use a non-mixed monomial basis in Section \ref{sec:reducedbasis}.  

\begin{example}
    For $r=2$, the initial TSP support for the weight minimization problem \eqref{POP-example} reads as
    \begin{align*}
        \mathcal{S}^{(0)}&=\mathcal{S}_{\eqref{POP-example}} \cup \bm{b}_2(\bm{x}^2) \\
        &= \left\lbrace 1,x_1,x_2,x_3,x_1^2,x_2^2,x_3^2,x_1^4,x_2^4,x_3^4,x_1^2x_2^2,x_1^2x_3^2,x_2^2x_3^2 \right\rbrace.
    \end{align*}
    The associated TSP graph appears visualized in Figure \ref{figure-TSP-supportextension-graph-example}.
   
\end{example}

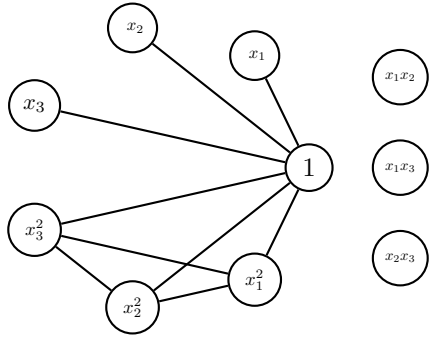
\begin{figure}[!h]

\centering \begin{tikzpicture}[node distance={12mm}, thick, main/.style = {draw, circle, radius=0.1mm}, auto]
\node[main] at (360/7* 1:1.9cm ) (1) {\adjustbox{max width=3mm}{ $x_1$}}; 
\node[main] at (360/7* 2:1.9cm ) (2) {\adjustbox{max width=3mm}{ $x_2$}}; 
\node[main] at (360/7* 3:1.9cm ) (3) {\adjustbox{max width=3mm}{$x_3$}}; 
\node[main] at (360/7* 4:1.9cm ) (4) {\adjustbox{max width=2.5mm}{$x_3^2$}}; 
\node[main] at (360/7* 5:1.9cm )(5) {\adjustbox{max width=2.5mm}{$x_2^2$}}; 
\node[main] at (360/7* 6:1.9cm )(6) {\adjustbox{max width=2.5mm}{$x_1^2$}}; 
\node[main]  at (360/7* 7:1.9cm ) (7) {\adjustbox{max width=3mm}{$1$}}; 

\node[main]  [  right of=7] (8) {\adjustbox{max width=4mm}{$x_1x_3$}}; 
\node[main] [  above of=8] (9) {\adjustbox{max width=4mm}{$x_1x_2$}}; 
\node[main] [  below of=8]  (10) {\adjustbox{max width=4mm}{$x_2x_3$}}; 
\draw (7) -- (1);
\draw (7) -- (2);
\draw (7) -- (3);
\draw (7) -- (4);
\draw (7) -- (5);
\draw (7) -- (6);
\draw (4) -- (5);
\draw (4) -- (6);
\draw (5) -- (6);
\end{tikzpicture}  

\caption{The TSP graph of the illustrative problem \eqref{POP-example} with a relaxation degree $r=2$.}
\label{figure-TSP-supportextension-graph-example}
\end{figure}

\subsubsection{TSP with maximal chordal extension}
As stated in Section \ref{sec:mainTSP}, using the TSP with the maximal chordal extension leads to theoretical convergence when the graphs stabilize after a finite number of sparsity order steps. However, in our application, using TSP with maximal chordal extension leads, after few iterations, to complete graphs and, therefore, to dense SDP relaxations. This claim is shown in Figure \ref{figure-TSP-maximalchordalextension-graph-example}: in Figure \ref{figure-TSP-maximalchordalextension-graph-example}(a) the support extension operation is empty, therefore the graph $\mathcal{G}_{2,0}^{(0)}$ is identical to its support extension $\mathcal{F}_{2,0}^{(1)}$; the support extension of $\mathcal{G}_{2,0}^{(1)}$ in Figure \ref{figure-TSP-maximalchordalextension-graph-example}(c) makes the graph $\mathcal{F}_{2,0}^{(2)}$ connected and, therefore, after the maximal chordal extension, the graph $\mathcal{G}_{2,0}^{(2)}$ becomes complete. More generally, we have the following proposition:
\begin{proposition}
\label{prop_maximal_extension}
Let us consider Problem \eqref{compliance_problem_sdp_scalled}. For any relaxation order $r \geq 2$, the sequence of graphs $\mathcal{G}^{(k)}_{r,j}, \forall j \in  \lbrace 0,\ldots, m \rbrace$, obtained by TSP with maximal chordal extension are complete for $k=2$.
\end{proposition}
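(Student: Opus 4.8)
The plan is to reduce the claim to a connectivity statement and then settle connectivity by a parity argument supported by one elementary factorisation lemma. The decisive structural fact is that the maximal chordal extension (block completion) turns every connected component of a graph into a clique; hence each $\mathcal{G}^{(k)}_{r,j}$ is a disjoint union of cliques, and $\mathcal{G}^{(2)}_{r,j}$ is complete exactly when its support-extension graph $\mathcal{F}^{(2)}_{r,j}$ is connected on the whole vertex set $\bm{b}_{r-d_j}(\bm{x})$. It therefore suffices to prove that $\mathcal{F}^{(2)}_{r,j}$ is connected for every $j\in\{0,\dots,m\}$.

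First I would analyse the level $k=1$. Using that $\mathcal{S}^{(0)}$ consists only of the non-mixed monomials of degree at most two (or three) together with the perfect squares $\bm{b}_r(\bm{x}^2)$, one checks that $\{\bm{x}^{\bm{\alpha}},\bm{x}^{\bm{\beta}}\}$ is an edge of $\mathcal{F}^{(1)}_{r,0}=\mathcal{G}^{\mathrm{tsp}}$ exactly when $\bm{\alpha}$ and $\bm{\beta}$ have the same parity vector (so the sum is a perfect square) or the sum equals $\bm{e}_i$ (giving the edges $\{1,x_i\}$). Consequently the connected components are indexed by the parity classes in $\{0,1\}^n$, with the all-even class fused to every single-odd class through the hub $1$. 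After the maximal chordal extension this ``main'' component becomes a single clique containing $1$ and all the $x_i$.

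The key consequence is that the updated support $\mathcal{S}^{(1)}$ contains every product of two elements of the main clique. In particular $\mathcal{S}^{(1)}$ contains all mixed quadratics $x_ix_j$, so $\mathcal{S}^{(1)}\supseteq\bm{b}_2(\bm{x})$, and more generally it contains every monomial whose exponent has at most two odd entries, as well as every parity-weight-one monomial of the form $\bm{x}^{2\bm{\alpha}-\bm{e}_i}$. I would then prove connectivity of $\mathcal{F}^{(2)}_{r,0}$ by induction on the parity weight $w(\bm{\alpha})$, the number of odd entries of $\bm{\alpha}$: if $w\le 2$ then $\bm{x}^{\bm{\alpha}}\in\mathcal{S}^{(1)}$, which gives the edge $\{1,\bm{x}^{\bm{\alpha}}\}$; if $w\ge 3$, pick an odd position $i$, note that $\bm{x}^{2\bm{\alpha}-\bm{e}_i}\in\mathcal{S}^{(1)}$, so $\{\bm{x}^{\bm{\alpha}},\bm{x}^{\bm{\alpha}-\bm{e}_i}\}$ is an edge, and $\bm{x}^{\bm{\alpha}-\bm{e}_i}$ has weight $w-1$ and hence already lies in the main component by the inductive hypothesis. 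For $j\ge 1$ the argument is identical: since $1\in\mathrm{supp}(\bm{G}_j)$ for every constraint (the PMI and each $1-x_e^2$ carry a constant term), the choice $\bm{\gamma}=\bm{0}$ reproduces the same edge rule on the smaller vertex set $\bm{b}_{r-d_j}(\bm{x})$, and the shared support $\mathcal{S}^{(1)}$ already contains all monomials the induction needs.

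The main obstacle is the factorisation lemma underlying the induction step: every weight-one monomial $\bm{x}^{2\bm{\alpha}-\bm{e}_i}$, of degree up to $2r-1$, must split as a product of two monomials each of parity weight at most one and degree at most $r$, so that both factors are genuine nodes of the main clique. Verifying this degree balancing uniformly in $r$ is the one step requiring care, since a naive split such as $x_i\cdot(\bm{x}^{\bm{\alpha}-\bm{e}_i})^2$ can overshoot degree $r$; the remedy is to distribute the even exponents between the two factors so that each stays within $\bm{b}_r(\bm{x})$, which is always possible because the total degree is at most $2r$. Once this balancing is established, the parity induction closes and completeness at $k=2$ follows simultaneously for all $r\ge 2$ and all $j$.
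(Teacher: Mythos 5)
Your proposal is correct, and it shares the paper's skeleton---reduce completeness of $\mathcal{G}^{(2)}_{r,j}$ to connectivity of the support-extended graph $\mathcal{F}^{(2)}_{r,j}$, identify the $k=0$ main component as the monomials with at most one odd exponent (the content of the paper's Lemma~\ref{lem_maximal_extension}, which your parity-class description recovers exactly; note only that in the cubic case \eqref{support_compliance_K3!=0} there are extra edges with sum $3\bm{e}_i$, e.g.\ $\lbrace x_i, x_i^2\rbrace$, which however stay inside that component, so your ``exactly when'' needs a parenthesis), and use the block completion at $k=1$ to place all pairwise products of that clique into $\mathcal{S}^{(1)}$---but the connectivity engine is genuinely different. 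The paper inducts on the \emph{length} of $\bm{x}^{\bm{\alpha}}$ and invokes Lemma~\ref{weard_lemma}: a splitting index $s$ satisfying \eqref{weird_inequality} and two factors $\bm{x^a}$, $\bm{x^{a'}}$ obtained by rounding odd exponents up to even, joining $\bm{x}^{\bm{\alpha}}$ to the square-free defect monomial $\bm{x}^{\bm{a}+\bm{a}'-\bm{\alpha}}$ of smaller length. You induct instead on the \emph{parity weight} $w(\bm{\alpha})$ and join $\bm{x}^{\bm{\alpha}}$ to $\bm{x}^{\bm{\alpha}-\bm{e}_i}$, which needs only one fact: every $\bm{x}^{2\bm{\alpha}-\bm{e}_i}$ of degree $D\le 2r-1$ splits into two factors of parity weight at most one and degree at most $r$. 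That fact is true and elementary to verify---by distributing the even exponents, the odd factor can be given any odd degree $t\in[1,D]$, and the admissible window $[\max(1,D-r),\min(D,r)]$ has length $2r-D\ge 1$, hence contains an odd integer---so your key lemma is cleaner than \eqref{weird_inequality}; do record this parity-of-interval step explicitly, since it is the only place the balancing could fail. Your handling of $j\ge 1$ (edges via $\bm{\gamma}=\bm{0}$ and the shared support $\mathcal{S}^{(1)}$ on the smaller vertex set $\bm{b}_{r-d_j}(\bm{x})$) is also more explicit than the paper's bare assertion that $j=0$ suffices; the sole corner case is $n_\mathrm{e}=2$, where the constant term of \eqref{compliance_const2_sdp_scalled} vanishes and one would fall back on $\bm{\gamma}=\bm{e}_i$. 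One overstatement should be struck: $\mathcal{S}^{(1)}$ need not contain \emph{every} monomial with at most two odd entries---for $r=4$ the degree-$2r$ monomial $x_1x_2x_3^2x_4^2x_5^2$ admits no factorization into two weight-$\le 1$ factors of degree $\le 4$---but your induction never uses this: the base case concerns only vertices, whose degree is at most $r$, where the split is immediate.
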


  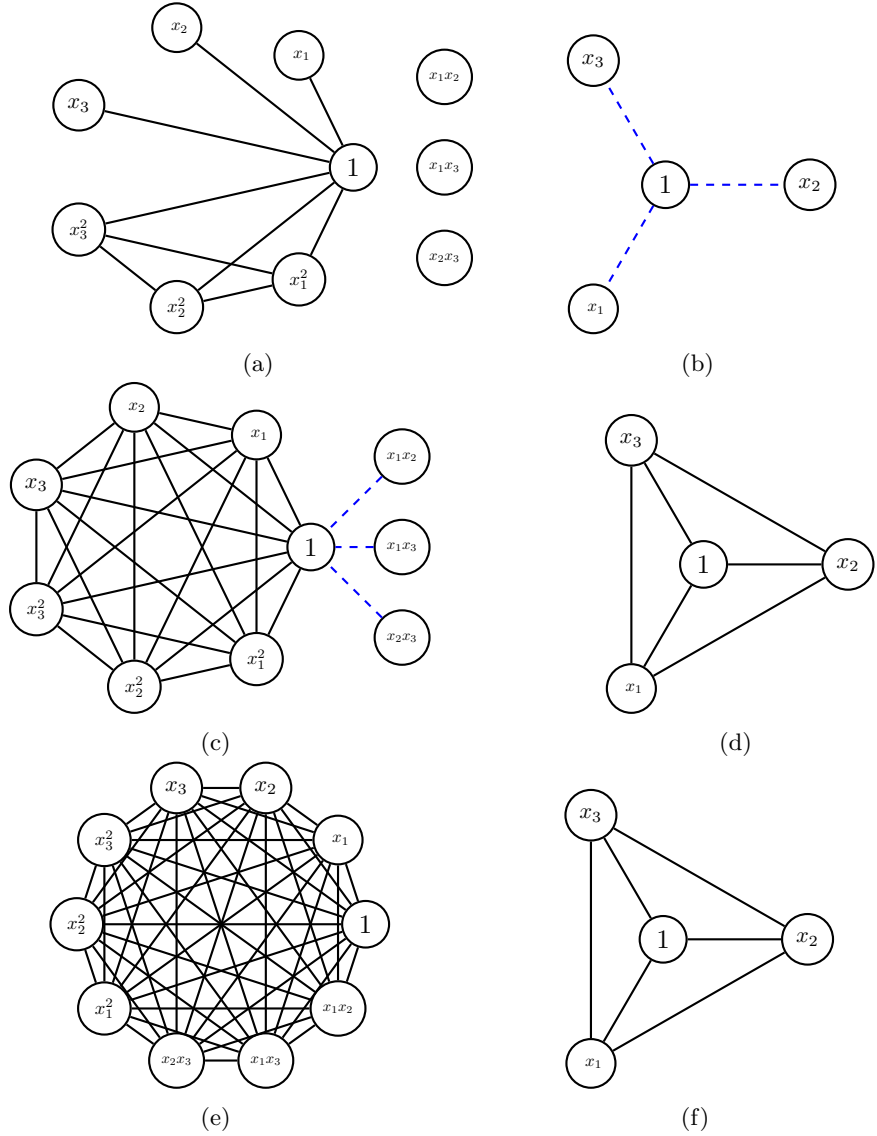
\begin{figure}[htbp!]

\centering
\subfloat[]{\begin{tikzpicture}[node distance={12mm}, thick, main/.style = {draw, circle, radius=0.1mm}, auto]
\node[main] at (360/7* 1:1.9cm ) (1) {\adjustbox{max width=3mm}{ $x_1$}}; 
\node[main] at (360/7* 2:1.9cm ) (2) {\adjustbox{max width=3mm}{ $x_2$}}; 
\node[main] at (360/7* 3:1.9cm ) (3) {\adjustbox{max width=3mm}{$x_3$}}; 
\node[main] at (360/7* 4:1.9cm ) (4) {\adjustbox{max width=2.5mm}{$x_3^2$}}; 
\node[main] at (360/7* 5:1.9cm )(5) {\adjustbox{max width=2.5mm}{$x_2^2$}}; 
\node[main] at (360/7* 6:1.9cm )(6) {\adjustbox{max width=2.5mm}{$x_1^2$}}; 
\node[main]  at (360/7* 7:1.9cm ) (7) {\adjustbox{max width=3mm}{$1$}}; 

\node[main]  [  right of=7] (8) {\adjustbox{max width=4mm}{$x_1x_3$}}; 
\node[main] [  above of=8] (9) {\adjustbox{max width=4mm}{$x_1x_2$}}; 
\node[main] [  below of=8]  (10) {\adjustbox{max width=4mm}{$x_2x_3$}}; 
\draw (7) -- (1);
\draw (7) -- (2);
\draw (7) -- (3);
\draw (7) -- (4);
\draw (7) -- (5);
\draw (7) -- (6);
\draw (4) -- (5);
\draw (4) -- (6);
\draw (5) -- (6);
\end{tikzpicture}  } \hspace*{1cm} 
\subfloat[]{\begin{tikzpicture}[node distance={12mm}, thick, main/.style = {draw, circle, radius=0.1mm}, auto]
\node[main] at (360/3* 2:1.9cm ) (2) {\adjustbox{max width=3mm}{ $ x_1$}}; 
\node[main] at (360/3* 3:1.9cm ) (3) {\adjustbox{max width=3mm}{$x_2$}}; 
\node[main] at (360/3* 4:1.9cm ) (4) {\adjustbox{max width=3mm}{$x_3$}}; 
\node[main]  at (0,0) (1) {\adjustbox{max width=3mm}{$1$}}; 
\draw[blue,dashed] (1) -- (2);
\draw[blue,dashed] (1) -- (3);
\draw[blue,dashed] (1) -- (4);

\end{tikzpicture}  } 

\centering \subfloat[]{\begin{tikzpicture}[node distance={12mm}, thick, main/.style = {draw, circle}, auto]
\node[main] at (360/7* 1:1.9cm ) (1) {\adjustbox{max width=3mm}{ $x_1$}}; 
\node[main] at (360/7* 2:1.9cm ) (2) {\adjustbox{max width=3mm}{ $x_2$}}; 
\node[main] at (360/7* 3:1.9cm ) (3) {\adjustbox{max width=3mm}{$x_3$}}; 
\node[main] at (360/7* 4:1.9cm ) (4) {\adjustbox{max width=2.5mm}{$x_3^2$}}; 
\node[main] at (360/7* 5:1.9cm )(5) {\adjustbox{max width=2.5mm}{$x_2^2$}}; 
\node[main] at (360/7* 6:1.9cm )(6) {\adjustbox{max width=2.5mm}{$x_1^2$}}; 
\node[main]  at (360/7* 7:1.9cm ) (7) {\adjustbox{max width=3mm}{$1$}}; 

\node[main]  [  right of=7] (8) {\adjustbox{max width=4mm}{$x_1x_3$}}; 
\node[main] [  above of=8] (9) {\adjustbox{max width=4mm}{$x_1x_2$}}; 
\node[main] [  below of=8]  (10) {\adjustbox{max width=4mm}{$x_2x_3$}}; 

\draw (1) -- (2);
\draw (1) -- (3);
\draw (1) -- (4);
\draw (1) -- (5);
\draw (1) -- (6);
\draw (1) -- (7);
\draw (2) -- (3);
\draw (2) -- (4);
\draw (2) -- (5);
\draw (2) -- (6);
\draw (2) -- (7);
\draw (3) -- (4);
\draw (3) -- (5);
\draw (3) -- (6);
\draw (3) -- (7);
\draw (4) -- (5);
\draw (4) -- (6);
\draw (4) -- (7);
\draw (5) -- (6);
\draw (5) -- (7);
\draw (6) -- (7);
\draw[blue,dashed] (8) -- (7);
\draw[blue,dashed] (9) -- (7);
\draw[blue,dashed] (10) -- (7);
\end{tikzpicture}} \hspace{1cm}
\hspace*{1cm} \subfloat[]{\begin{tikzpicture}[node distance={12mm}, thick, main/.style = {draw, circle, radius=0.1mm}, auto]
\node[main] at (360/3* 2:1.9cm ) (2) {\adjustbox{max width=3mm}{ $ x_1$}}; 
\node[main] at (360/3* 3:1.9cm ) (3) {\adjustbox{max width=3mm}{$x_2$}}; 
\node[main] at (360/3* 4:1.9cm ) (4) {\adjustbox{max width=3mm}{$x_3$}}; 
\node[main]  at (0,0) (1) {\adjustbox{max width=3mm}{$1$}}; 
\draw (1) -- (2);
\draw (1) -- (3);
\draw (1) -- (4);
\draw (2) -- (3);
\draw (2) -- (4);
\draw (3) -- (4);
\end{tikzpicture}  } \\ \subfloat[]{\begin{tikzpicture}[node distance={12mm}, thick, main/.style = {draw, circle}, auto]
\node[main] at (360/10* 1:1.9cm ) (2) {\adjustbox{max width=3mm}{ $ x_1$}}; 
\node[main] at (360/10* 2:1.9cm ) (3) {\adjustbox{max width=3mm}{$x_2$}}; 
\node[main] at (360/10* 3:1.9cm ) (4) {\adjustbox{max width=3mm}{$x_3$}}; 
\node[main] at (360/10* 4:1.9cm ) (10) {\adjustbox{max width=2.5mm}{$x_3^2$}} ; 
\node[main] at (360/10* 5:1.9cm )(8) {\adjustbox{max width=2.5mm}{$x_2^2$}}; 
\node[main] at (360/10* 6:1.9cm )(5) {\adjustbox{max width=2.5mm}{$x_1^2$}}; 
\node[main]  at (360/10* 10:1.9cm ) (1) {\adjustbox{max width=3mm}{$1$}}; 

\node[main]  at (360/10* 8:1.9cm ) (7) {\adjustbox{max width=4mm}{$x_1x_3$}}; 
\node[main] at (360/10* 9:1.9cm ) (6) {\adjustbox{max width=4mm}{$x_1x_2$}}; 
\node[main] at (360/10* 7:1.9cm )  (9) {\adjustbox{max width=4mm}{$x_2x_3$}};  

\draw (1) -- (2);
\draw (1) -- (3);
\draw (1) -- (4);
\draw (1) -- (5);
\draw (1) -- (6);
\draw (1) -- (7);
\draw (1) -- (8);
\draw (1) -- (9);
\draw (1) -- (10);
\draw (2) -- (3);
\draw (2) -- (4);
\draw (2) -- (5);
\draw (2) -- (6);
\draw (2) -- (7);
\draw (2) -- (8);
\draw (2) -- (9);
\draw (2) -- (10);
\draw (3) -- (4);
\draw (3) -- (5);
\draw (3) -- (6);
\draw (3) -- (7);
\draw (3) -- (8);
\draw (3) -- (9);
\draw (3) -- (10);
\draw (4) -- (5);
\draw (4) -- (6);
\draw (4) -- (7);
\draw (4) -- (8);
\draw (4) -- (9);
\draw (4) -- (10);
\draw (5) -- (6);
\draw (5) -- (7);
\draw (5) -- (8);
\draw (5) -- (9);
\draw (5) -- (10);
\draw (6) -- (7);
\draw (6) -- (8);
\draw (6) -- (9);
\draw (6) -- (10);
\draw (7) -- (8);
\draw (7) -- (9);
\draw (7) -- (10);
\draw  (8) -- (9);
\draw  (8) -- (10);
\draw  (9) -- (10);
\end{tikzpicture}} \hspace{1cm}
\hspace*{1cm} \subfloat[]{\begin{tikzpicture}[node distance={12mm}, thick, main/.style = {draw, circle, radius=0.1mm}, auto]
\node[main] at (360/3* 2:1.9cm ) (2) {\adjustbox{max width=3mm}{ $ x_1$}}; 
\node[main] at (360/3* 3:1.9cm ) (3) {\adjustbox{max width=3mm}{$x_2$}}; 
\node[main] at (360/3* 4:1.9cm ) (4) {\adjustbox{max width=3mm}{$x_3$}}; 
\node[main]  at (0,0) (1) {\adjustbox{max width=3mm}{$1$}}; 
\draw (1) -- (2);
\draw (1) -- (3);
\draw (1) -- (4);
\draw (2) -- (3);
\draw (2) -- (4);
\draw (3) -- (4);
\end{tikzpicture}  }

\caption{TSP with maximal chordal extension for the illustrative problem \eqref{POP-example} for $j=0, 1$: (a) the graph $\mathcal{F}_{2,0}^{(1)}$, (b) the graph $\mathcal{F}_{2,1}^{(1)}$, (c) the graph $\mathcal{F}_{2,0}^{(2)}$, (d) the graph $\mathcal{F}_{2,1}^{(2)}$, (e) the graph $\mathcal{G}_{2,0}^{(2)}$, and (f) the graph $\mathcal{G}_{2,1}^{(2)}$. The support extensions are drawn in dashed blue lines.}
\label{figure-TSP-maximalchordalextension-graph-example}
\end{figure}

A detailed proof of Proposition \ref{prop_maximal_extension} can be found in Appendix \ref{proof-prop}. In numerical implementation, we thus stop the TSP with maximal chordal extension in sparsity order $k=1$. The theoretical convergence is no longer guaranteed in this case; nevertheless, numerical convergence holds in all numerical examples in Section \ref{sec:numerical-experience}.

\subsubsection{TSP with minimal chordal extension}
In many cases, the TSP with minimal chordal extension will also lead to complete graphs after a stabilized sparsity order $k$. This is shown in Figure \ref{figure-TSP-minimalchordalextension-graph-example}: the support extension operation in Figure \ref{figure-TSP-minimalchordalextension-graph-example}(a) is empty and the obtained graph $\mathcal{F}_{2,0}^{(1)}$ is minimal chordal so that $\mathcal{G}_{2,0}^{(1)}=\mathcal{F}_{2,0}^{(1)}$; the graph $\mathcal{F}_{2,0}^{(2)}$ is also minimal chordal, so that only edges corresponding to the support extensions are added in Figure \ref{figure-TSP-minimalchordalextension-graph-example}(c) leading to $\mathcal{G}_{2,0}^{(2)}=\mathcal{F}_{2,0}^{(2)}$. The same holds for the graph $\mathcal{F}_{2,0}^{(3)}$, we only add the support extension edges as shown in Figure \ref{figure-TSP-minimalchordalextension-graph-example}(e). The obtained graph $\mathcal{G}_{2,0}^{(3)}$ is complete. However, this property does not always hold.

When the problem has many variables and/or requires a high relaxation order to be solved, using the approximate chordal extension at a sparsity order $k \geq 3$ can lead to large number of cliques and/or cliques of large sizes.  In some cases, this can diminish the computational benefit of using TSP with minimal chordal extension.

In our numerical implementation, we stop the TSP procedure at a given sparsity order $\overline{k}$ if one or more of the following criteria hold: 
\begin{itemize}
    \item if the certificate of $\varepsilon-$global optimality, described in Section \ref{certificate-global}, is satisfied for $\overline{k}$;
    \item if at the sparsity order $\overline{k}+1$, the graphs are complete; 
    \item if at the sparsity order $\overline{k}+1$, there is a large number of obtained cliques with large sizes (see Table \ref{tab:14-elements-compliance-TSP-min-ch-k=2}).
\end{itemize}

\begin{figure}[!h]

\centering
\subfloat[]{\begin{tikzpicture}[node distance={12mm}, thick, main/.style = {draw, circle, radius=0.1mm}, auto]
\node[main] at (360/7* 1:1.9cm ) (1) {\adjustbox{max width=3mm}{ $x_1$}}; 
\node[main] at (360/7* 2:1.9cm ) (2) {\adjustbox{max width=3mm}{ $x_2$}}; 
\node[main] at (360/7* 3:1.9cm ) (3) {\adjustbox{max width=3mm}{$x_3$}}; 
\node[main] at (360/7* 4:1.9cm ) (4) {\adjustbox{max width=2.5mm}{$x_3^2$}}; 
\node[main] at (360/7* 5:1.9cm )(5) {\adjustbox{max width=2.5mm}{$x_2^2$}}; 
\node[main] at (360/7* 6:1.9cm )(6) {\adjustbox{max width=2.5mm}{$x_1^2$}}; 
\node[main]  at (360/7* 7:1.9cm ) (7) {\adjustbox{max width=3mm}{$1$}}; 

\node[main]  [  right of=7] (8) {\adjustbox{max width=4mm}{$x_1x_3$}}; 
\node[main] [  above of=8] (9) {\adjustbox{max width=4mm}{$x_1x_2$}}; 
\node[main] [  below of=8]  (10) {\adjustbox{max width=4mm}{$x_2x_3$}}; 
\draw (7) -- (1);
\draw (7) -- (2);
\draw (7) -- (3);
\draw (7) -- (4);
\draw (7) -- (5);
\draw (7) -- (6);
\draw (4) -- (5);
\draw (4) -- (6);
\draw (5) -- (6);

\end{tikzpicture}  } \hspace*{1cm}
\subfloat[]{\begin{tikzpicture}[node distance={12mm}, thick, main/.style = {draw, circle, radius=0.1mm}, auto]
\node[main] at (360/3* 2:1.9cm ) (2) {\adjustbox{max width=3mm}{ $ x_1$}}; 
\node[main] at (360/3* 3:1.9cm ) (3) {\adjustbox{max width=3mm}{$x_2$}}; 
\node[main] at (360/3* 4:1.9cm ) (4) {\adjustbox{max width=3mm}{$x_3$}}; 
\node[main]  at (0,0) (1) {\adjustbox{max width=3mm}{$1$}}; 
\draw[blue,dashed] (1) -- (2);
\draw[blue,dashed] (1) -- (3);
\draw[blue,dashed] (1) -- (4);

\end{tikzpicture}  }\\
\subfloat[]{\begin{tikzpicture}[node distance={12mm}, thick, main/.style = {draw, circle, radius=0.1mm}, auto]
\node[main] at (360/10* 1:1.9cm ) (2) {\adjustbox{max width=3mm}{ $ x_1$}}; 
\node[main] at (360/10* 2:1.9cm ) (3) {\adjustbox{max width=3mm}{$x_2$}}; 
\node[main] at (360/10* 3:1.9cm ) (4) {\adjustbox{max width=3mm}{$x_3$}}; 
\node[main] at (360/10* 4:1.9cm ) (10) {\adjustbox{max width=2.5mm}{$x_3^2$}} ; 
\node[main] at (360/10* 5:1.9cm )(8) {\adjustbox{max width=2.5mm}{$x_2^2$}}; 
\node[main] at (360/10* 6:1.9cm )(5) {\adjustbox{max width=2.5mm}{$x_1^2$}}; 
\node[main]  at (360/10* 10:1.9cm ) (1) {\adjustbox{max width=3mm}{$1$}}; 

\node[main]  at (360/10* 8:1.9cm ) (7) {\adjustbox{max width=4mm}{$x_1x_3$}}; 
\node[main] at (360/10* 9:1.9cm ) (6) {\adjustbox{max width=4mm}{$x_1x_2$}}; 
\node[main] at (360/10* 7:1.9cm )  (9) {\adjustbox{max width=4mm}{$x_2x_3$}}; 

\draw (1) -- (2);
\draw (1) -- (3);
\draw (1) -- (4);
\draw (1) -- (5);
\draw (1) -- (8);
\draw (1) -- (10);
\draw (5) -- (8);
\draw  (5) -- (10);
\draw  (8) -- (10);

\draw[blue,dashed] (6) -- (7);
\draw[blue,dashed] (6) -- (9);
\draw[blue,dashed] (7) -- (9);
\draw[blue,dashed] (1) -- (7);
\draw[blue,dashed] (1) -- (6);
\draw[blue,dashed] (1) -- (9);
\draw[blue,dashed] (2) -- (3);
\draw[blue,dashed] (2) -- (4);
\draw[blue,dashed] (2) -- (5);
\draw[blue,dashed] (2) -- (6);
\draw[blue,dashed] (2) -- (7);
\draw[blue,dashed] (2) -- (8);
\draw[blue,dashed] (2) -- (9);
\draw[blue,dashed] (2) -- (10);
\draw[blue,dashed] (3) -- (4);
\draw[blue,dashed] (3) -- (5);
\draw[blue,dashed] (3) -- (6);
\draw[blue,dashed] (3) -- (8);
\draw[blue,dashed] (3) -- (9);
\draw[blue,dashed] (3) -- (10);
\draw[blue,dashed] (4) -- (5);
\draw[blue,dashed] (4) -- (7);
\draw[blue,dashed] (4) -- (8);
\draw[blue,dashed] (4) -- (9);
\draw[blue,dashed] (4) -- (10);

\end{tikzpicture}  } \hspace*{1cm}
\subfloat[]{\begin{tikzpicture}[node distance={12mm}, thick, main/.style = {draw, circle, radius=0.1mm}, auto]
\node[main] at (360/3* 2:1.9cm ) (2) {\adjustbox{max width=3mm}{ $ x_1$}}; 
\node[main] at (360/3* 3:1.9cm ) (3) {\adjustbox{max width=3mm}{$x_2$}}; 
\node[main] at (360/3* 4:1.9cm ) (4) {\adjustbox{max width=3mm}{$x_3$}}; 
\node[main]  at (0,0) (1) {\adjustbox{max width=3mm}{$1$}}; 
\draw (1) -- (2);
\draw (1) -- (3);
\draw (1) -- (4);
\draw[blue,dashed] (2) -- (3);
\draw[blue,dashed] (2) -- (4);
\draw[blue,dashed] (3) -- (4);

\end{tikzpicture}  }\\
\subfloat[]{\begin{tikzpicture}[node distance={12mm}, thick, main/.style = {draw, circle, radius=0.1mm}, auto]
\node[main] at (360/10* 1:1.9cm ) (2) {\adjustbox{max width=3mm}{ $ x_1$}}; 
\node[main] at (360/10* 2:1.9cm ) (3) {\adjustbox{max width=3mm}{$x_2$}}; 
\node[main] at (360/10* 3:1.9cm ) (4) {\adjustbox{max width=3mm}{$x_3$}}; 
\node[main] at (360/10* 4:1.9cm ) (10) {\adjustbox{max width=2.5mm}{$x_3^2$}} ; 
\node[main] at (360/10* 5:1.9cm )(8) {\adjustbox{max width=2.5mm}{$x_2^2$}}; 
\node[main] at (360/10* 6:1.9cm )(5) {\adjustbox{max width=2.5mm}{$x_1^2$}}; 
\node[main]  at (360/10* 10:1.9cm ) (1) {\adjustbox{max width=3mm}{$1$}}; 

\node[main]  at (360/10* 8:1.9cm ) (7) {\adjustbox{max width=4mm}{$x_1x_3$}}; 
\node[main] at (360/10* 9:1.9cm ) (6) {\adjustbox{max width=4mm}{$x_1x_2$}}; 
\node[main] at (360/10* 7:1.9cm )  (9) {\adjustbox{max width=4mm}{$x_2x_3$}}; 

\draw (1) -- (2);
\draw (1) -- (3);
\draw (1) -- (4);
\draw (1) -- (5);
\draw (1) -- (8);
\draw (1) -- (10);
\draw (6) -- (7);
\draw (6) -- (9);
\draw (7) -- (9);
\draw (1) -- (7);
\draw (1) -- (6);
\draw (1) -- (9);
\draw(2) -- (3);
\draw (2) -- (4);
\draw (2) -- (5);
\draw (2) -- (6);
\draw (2) -- (7);
\draw (2) -- (8);
\draw (2) -- (9);
\draw (2) -- (10);
\draw (3) -- (4);
\draw (3) -- (5);
\draw (3) -- (6);
\draw (3) -- (8);
\draw (3) -- (9);
\draw (3) -- (10);
\draw (4) -- (5);
\draw (4) -- (7);
\draw (4) -- (8);
\draw (4) -- (9);
\draw (4) -- (10);
\draw (5) -- (8);
\draw (5) -- (10);
\draw (8) -- (10);
\draw[blue,dashed] (3) -- (7);
\draw[blue,dashed] (4) -- (6);
\draw[blue,dashed] (5) -- (6);
\draw[blue,dashed] (5) -- (7);
\draw[blue,dashed] (5) -- (9);
\draw[blue,dashed] (6) -- (7);
\draw[blue,dashed] (6) -- (8);
\draw[blue,dashed] (6) -- (9);
\draw[blue,dashed] (6) -- (10);
\draw[blue,dashed] (7) -- (8);
\draw[blue,dashed] (7) -- (9);
\draw[blue,dashed] (7) -- (10);
\draw[blue,dashed] (8) -- (9);
\draw[blue,dashed] (9) -- (10);

\end{tikzpicture}  }\hspace*{1cm}
\subfloat[]{\begin{tikzpicture}[node distance={12mm}, thick, main/.style = {draw, circle, radius=0.1mm}, auto]
\node[main] at (360/3* 2:1.9cm ) (2) {\adjustbox{max width=3mm}{ $ x_1$}}; 
\node[main] at (360/3* 3:1.9cm ) (3) {\adjustbox{max width=3mm}{$x_2$}}; 
\node[main] at (360/3* 4:1.9cm ) (4) {\adjustbox{max width=3mm}{$x_3$}}; 
\node[main]  at (0,0) (1) {\adjustbox{max width=3mm}{$1$}}; 
\draw (1) -- (2);
\draw (1) -- (3);
\draw (1) -- (4);
\draw (2) -- (3);
\draw (2) -- (4);
\draw (3) -- (4);
\end{tikzpicture}  }
\caption{TSP with minimal chordal extension for the illustrative problem \eqref{POP-example} for $j=0, 1$: (a) the graph $\mathcal{G}_{2,0}^{(1)}=\mathcal{F}_{2,0}^{(1)}$, (b) the graph $\mathcal{F}_{2,1}^{(1)}$, (c) the graph $\mathcal{G}_{2,1}^{(2)}=\mathcal{F}_{2,0}^{(2)}$, (d) the graph $\mathcal{F}_{2,1}^{(2)}$, (e) the graph $\mathcal{G}_{2,0}^{(3)}=\mathcal{F}_{2,0}^{(3)}$, and (f) the graph $\mathcal{G}_{2,1}^{(3)}=\mathcal{F}_{2,1}^{(3)}$. The support extensions are drawn in dashed blue lines.}

\label{figure-TSP-minimalchordalextension-graph-example}
\end{figure}
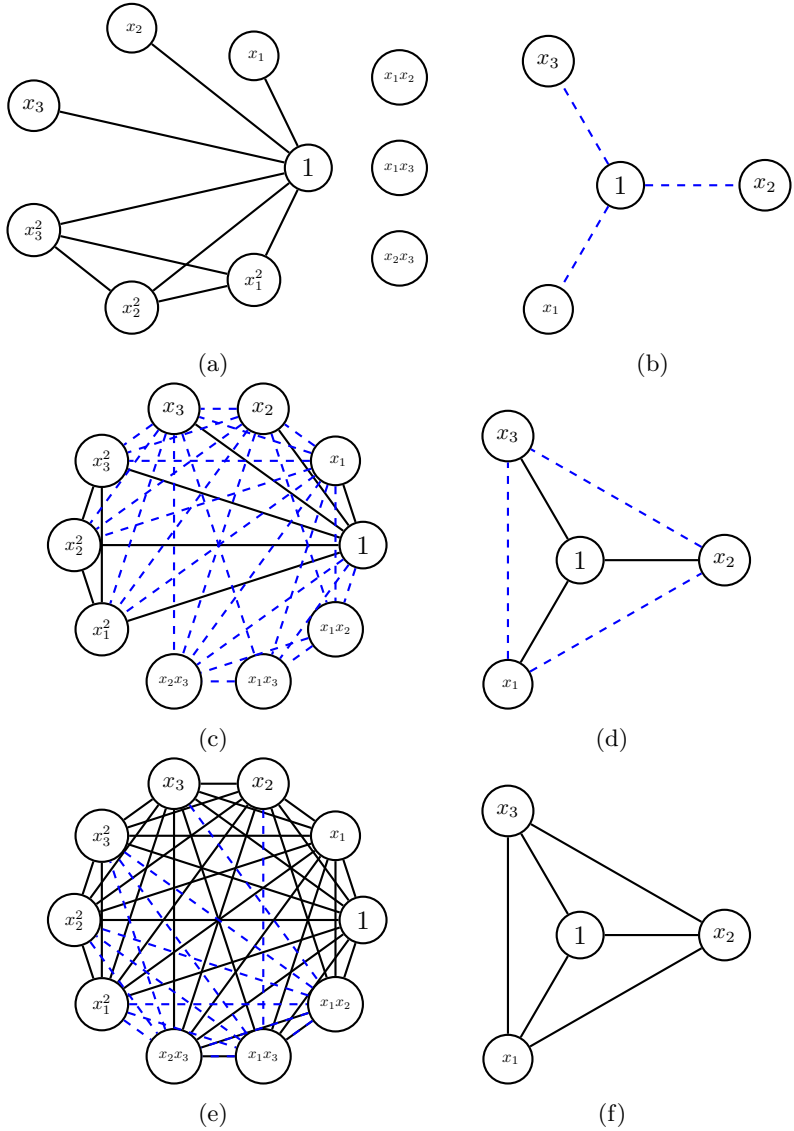

\subsubsection{The non-mixed term monomial basis}
\label{sec:reducedbasis}
Another way to exploit term sparsity in a POP is to use a reduced monomial basis to build dense SDP relaxations \eqref{lassereSOS} or sparse SDP relaxations \eqref{TS_lassereSOS}, i.e., using a smaller monomial basis $\bm{b}(\bm{x}) \subset \bm{b}_r(\bm{x})$. Such method has already been developed for unconstrained POPs and is known as the Newton polytope method \cite{reznick1978extremal,bajbar2015coercive}. However, the technique does not apply to the constrained case. For the constrained case, the authors of \cite{magron2023sparse,wang2020chordal} suggested an iterative procedure, while performing TSP for a fixed relaxation and sparsity order, to obtain a possibly smaller monomial basis.

Here, we suggest using a reduced monomial basis $\bm{b}_r^{\mathrm{NMT}}(\bm{x})$ which we call a non-mixed term (NMT) monomial basis. This basis exploits the separable nature of polynomials in the frame structure problems.

\begin{definition}[Non-mixed term monomial basis]
We define the non-mixed term monomial basis by removing all mixed terms in the basis defined in \eqref{strandard_basis}, i.e.,  \begin{equation}
\bm{b}_{r}^{\mathrm{NMT}}(\bm{x}):= \left\lbrace 1,x_1,\ldots, x_n,x_1^2, \ldots, x_n^2, \ldots, x_1^r, \ldots, x_n^r \right\rbrace.
\label{nmt_basis}
\end{equation}
\end{definition}
 The monomial basis \eqref{nmt_basis} is built easily by adding $n$ new terms at each relaxation order, and do not depend on the cliques of the TSP graphs as in \cite{magron2023sparse,wang2020chordal}. 
 
 To exploit term sparsity using the NMT basis, we can build the dense SDP relaxations \eqref{lassereSOS} using the NMT basis in the same way as shown in Section \ref{sec:lassere_mSOS}. Since the NMT basis contains fewer elements than the standard monomial basis \eqref{strandard_basis}, $\vert \bm{b_{r}}^{NMT}(\bm{x}) \vert=nr+1 < \vert \bm{b_{r}}(\bm{x})\vert $, we have smaller moment and localizing matrices. On the other hand, it also reduces the CPU time to construct the hierarchy \eqref{lassereSOS}. Furthermore, the NMT basis can be combined with the TSP technique and provide even sparser versions of \eqref{TS_lassereSOS}. However, the maximal chordal extension does not provide any benefit in this case because the TSP graph is complete at sparsity order $k=1$ (see Lemma \ref{lem_maximal_extension} in Appendix \ref{proof-prop}).  
 
As can be seen in Section \ref{sec:numerical-experience}, numerical convergence occurs in all cases when using this basis for problems \eqref{compliance_problem_sdp_scalled} and \eqref{weight_problem_sdp_scalled}. However, we do not have a theoretical convergence result. To the best of our knowledge, the use of the NMT basis for constrained POPs that have a separable structure has not been explored in the literature, neither from a numerical nor theoretical point of view.

 We conclude this section by noting that the sparsity pattern of mSOS using the NMT basis cannot be obtained using the TSP with the standard monomial basis. Specifically, in this case, we would have to obtain cliques by TSP as a subset of the NMT basis. However, when we use the TSP (minimal or maximal) with the standard basis, we already have cross-terms in the cliques. To show this, let us consider the example of TSP graph in Figure \ref{figure-TSP-supportextension-graph-example}. There, we have a connected component $\lbrace 1, x_1, x_2, x_3, x_1^2, x_2^2,x_3^2 \rbrace$ but also the singleton cliques $x_1x_2$, $x_1x_3,$ $x_2x_3$, which we would not obtain using the NMT basis.

\section{Numerical experiments }
\label{sec:numerical-experience}

In this section, we provide numerical examples of the methods presented above: the dense mSOS hierarchy, both minimal and maximal chordal TSP mSOS hierarchy with the standard monomial basis, the minimal TSP with the non-mixed monomial basis, and finally the dense mSOS hierarchy with the non-mixed monomial basis. Numerical tests cover various frame structure optimization problems; each of them is described individually before the series of tests.   

All numerical tests were implemented in MATLAB and solved using the Mosek optimizer \cite{mosek} on the Apple M1 MacBook Pro laptop with 8 GB RAM. The implementation can be found at \url{https://gitlab.com/tyburec/pof-dyna}. We use the abbreviations:
\begin{itemize}
\item mSOS: the dense moment-SOS hierarchy using the standard monomial basis
\item NMT mSOS: the dense moment-SOS hierarchy using the NMT monomial basis
\item TSP max.ch: the TS SOS hierarchy with maximal chordal extension, sparsity order $1$ and using the standard monomial basis   
\item TSP min.ch: the TS SOS hierarchy with minimal chordal extension, and using the standard monomial basis   
\item NMT TSP: the TS SOS hierarchy with minimal chordal extension and using the NMT monomial basis
\item $k$: the sparsity order 
\item  $(n_c,s)$: the number $n_c$ of the linear matrix inequality of size $s$ in the relaxed problem
\item nvar: number of variables in the SDP relaxation
\item l.b: the lower bound
\item time: the CPU time in seconds for solving the SDP relaxation
\item $\varepsilon_{\text{rel}}$: the relative $\varepsilon-$optimally gap defined in \ref{certificate-global}; i.e., $\varepsilon_{\text{rel}}= \frac{\text{upper bound}}{\text{lower bound}}-1.$
\item $-$: out of memory 
\end{itemize}

\subsection{$21$-elements frame structure}
\label{sec:numerical-example-static21}

As the first illustration, we consider the frame structure containing $21$ elements and $8$ nodes as introduced in \cite[Section 4.5]{tyburec2021global}, see Fig.~\ref{fig:frame21bc}. Two downward loads are applied to the nodes $\circled{c}$ and $\circled{f}$ with magnitudes of $2$ and $3.5$, respectively, and the left nodes $\circled{a}$, $\circled{d}$ and $\circled{g}$ are clamped. The elements are made of a linear elastic isotropic material of the dimensionless Young modulus $E=1$, density $\rho = 1$ and share a circular hollow cross section, the thickness of the wall being $0.376$-multiple of the radius $r_i$ of individual cross sections, Fig.~\ref{fig:frame21cs}. Consequently, the stiffness matrix is a polynomial of degree two. We note here that in \cite{tyburec2021global}, the value of wall thickness was incorrectly reported as $0.2$ and also the number of elements was falsely stated as $22$. However, the optimization results presented there are correct.

\begin{figure}[!htbp]
\subfloat[\label{fig:frame21bc}]{%
    \begin{tikzpicture}
		\scaling{1.5};
		
		\point{a}{0}{0};
		\point{b}{1}{0};
		\point{c}{2}{0};
		\point{d}{0}{1};
		\point{e}{1}{1};
		\point{f}{2}{1};
		\point{g}{0}{2};
		\point{h}{1}{2};
		\point{i}{2}{2};
				
		\beam{2}{a}{b};
		\beam{2}{a}{e};
		\beam{2}{a}{f};
		\beam{2}{a}{h};
		\beam{2}{b}{c};
		\beam{2}{b}{d};
		\beam{2}{b}{e};
		\beam{2}{b}{f};
		\beam{2}{b}{g};
		\beam{2}{c}{d};
		\beam{2}{c}{e};
		\beam{2}{c}{f};
		\beam{2}{c}{h};
		\beam{2}{d}{e};
		\beam{2}{e}{f};
		\beam{2}{d}{h};
		\beam{2}{e}{g};
		\beam{2}{e}{h};
		\beam{2}{g}{h};
		\beam{2}{f}{g};
		\beam{2}{f}{h};
		
		\support{3}{a}[270];
		\support{3}{d}[270];
		\support{3}{g}[270];
		
		\load{1}{c}[90][0.7][-0.7];
		\load{1}{f}[90][1][0.12];
		\notation{1}{c}{$2$}[below right=2mm];
		\notation{1}{f}{$3.5$}[above right=2mm];
		
		\dimensioning{1}{a}{b}{-1.2}[$1$]
		\dimensioning{1}{b}{c}{-1.2}[$1$]
		\dimensioning{2}{a}{d}{-0.8}[$1$]
		\dimensioning{2}{d}{g}{-0.8}[$1$]
		
		\notation{1}{a}{\circled{$a$}}[align=center];
		\notation{1}{b}{\circled{$b$}}[align=center];
		\notation{1}{c}{\circled{$c$}}[align=center];
		\notation{1}{d}{\circled{$d$}}[align=center];
		\notation{1}{e}{\circled{$e$}}[align=center];
		\notation{1}{f}{\circled{$f$}}[align=center];
		\notation{1}{g}{\circled{$g$}}[align=center];
		\notation{1}{h}{\circled{$h$}}[align=center];
	\end{tikzpicture}
}
\subfloat[\label{fig:frame21cs}]{%
    \raisebox{1.5cm}{\begin{tikzpicture}[scale=1]
		\scaling{0.2}
		\point{a}{0}{0};
		\point{a1}{3.12182042}{0};
		\point{a2}{5}{0};
		\draw[black, fill=gray, fill opacity=0.2] (0,0) circle (1);
		\draw[black, fill=white, fill opacity=1.0] (0,0) circle (0.624364084);
		\dimensioning{1}{a}{a1}{1.2}[\raisebox{2mm}{$0.624r_i$}];
            \dimensioning{1}{a}{a2}{0}[$r_i\quad$];
	\end{tikzpicture}}
}
\subfloat[\label{fig:frame21opt}]{%
      \includegraphics[width=0.35\linewidth]{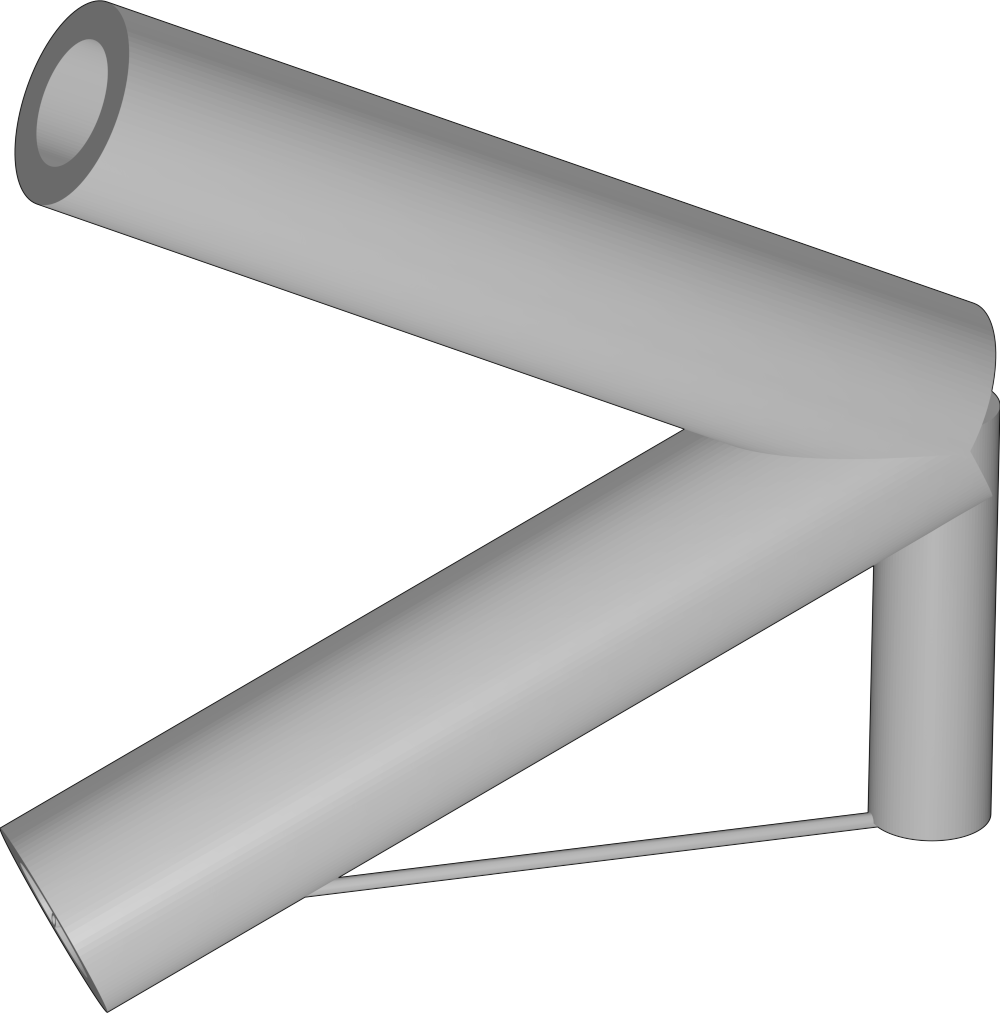}
    }
    \caption{$21$-elements frame structure: (a) discretization and boundary conditions, (b) cross-section parametrization, and (c) optimal design.}
\end{figure}

\subsection*{Compliance minimization}

As the first setting for the $21$-elements frame problem, we assume the compliance minimization setting. To make the problem bounded, we constrain the total weight of material from above by $\overline{w}=0.5$. In this case, Tyburec~\textit{et al.}~\cite{tyburec2021global} showed that the problem exhibits at least two local minimizers that are not global, and that would be obtained by standard local optimization algorithms such as SQP. However, the global solution can be reached using the mSOS hierarchy; see Fig.~\ref{fig:frame21opt} for a visualization of the optimal design.

In particular, using the mSOS hierarchy, convergence based on the rank-flatness condition occurs in the second relaxation, and the associated relative optimality gap $\varepsilon_{\mathrm{rel}}$ is within the expected range of the accuracy of the numerical optimizer. Slightly improved relative optimality gaps follow from a solution to the second relaxation using term-sparsity techniques such as TSP with maximal chordal extension and sparsity order one, TSP with minimal chordal extension and sparsity order two, or the NMT basis version. In all these cases, global designs up to optimizer tolerance have been found with more than $15$-times speed-up; see Table~\ref{numerical-example-dense-static21-compliance}. On the other hand, NMT TSP with sparsity order one was not sufficient for convergence and a higher sparsity order leads to the dense NMT mSOS.

     \begin{table}[h!]
         \caption{Computational results for compliance minimization of $21$-elements frame structure and the relaxation order $r=2$.}
  \begin{tabular}{ l  l  c c c c }
    \hline 
   \textbf{Method}    &$ \bm{(n_c ,s)} $ & $\textbf{nvar}$&\textbf{l.b.} & \textbf{time} & $\bm{\varepsilon_}{rel}$\\
       \hline 
        mSOS  & $ (22, 23), (1, 276), (1 , 368) $ & $14949$ &  $1668.58$   & $1800$ &$8\mathrm{e}{-05}$ \\

          \multirow{2}{*}{TSP max.ch. $k=1$  }   & $(22,23), (231,1),(1,45)$  & \multirow{2}{*}{ $7403$ }& \multirow{2}{*}{$1668.58$}  & \multirow{2}{*}{$  100 $ }& \multirow{2}{*}{$2\mathrm{e}{-06}$}\\
           &$(1,368)$ & & &  \\

           \multirow{2}{*}{TSP min.ch. $k=2$}   & $(231,4), (22,23), (1,45)$  & \multirow{2}{*}{$7403$ }& \multirow{2}{*}{$1668.59$}  & \multirow{2}{*}{$  120 $ }& \multirow{2}{*}{$6\mathrm{e}{-06}$}\\
           &$(1,368)$ & & &  \\
        
        NMT mSOS & $(22,23), (1,45), (1,368) $  & $7403$& $1668.58$   & $90$ &$ 2\mathrm{e}{-06}$  \\
        
         NMT TSP $k=1$  & $(528,2), (22,32)$ & $1012$ & $1530.60$ & $2$ & $0.5$\\ 
       \hline 
    \end{tabular}
    \label{numerical-example-dense-static21-compliance}
    \end{table}

\subsection*{Weight minimization}

Similarly to the compliance optimization version, we have also considered a solution of weight minimization. In particular, we fix $\overline{c} = 1668.58$, which is the optimal objective function of the compliance optimization variant, and we expect the optimal weight to be less than or equal to $0.5$, which is the weight bound used in compliance minimization.

The solution to this problem setting using the mSOS hierarchy is computationally intractable using testing hardware: In the second relaxation, the hierarchy does not converge, and the degree-three relaxation requires almost $291,745$ variables and, thus, enormous memory requirements. A similar situation emerges when using TSP with maximal chordal extension and sparsity order $1$.

On the contrary, the degree-three relaxation can be solved to approximate global optimality using the remaining TSP techniques presented in this paper. Specifically, TSP with minimal chordal extension and sparsity order one reached a relative optimality gap of $10^{-4}$ in $600$~s, NMT basis variant required $510$~s to achieve a guarantee of $3\times10^{-5}$ and NMT with TSP and sparsity order two achieved a relative optimality gap of $3\times 10^{-5}$ in $400$~s, see Table~\ref{numerical-example-dense-static21}.

 \begin{table}
     \caption{Computational results for weight minimization of $21$-elements frame structure and the relaxation order $r=3$.}
  \begin{tabular}{ l l c c c c }
    \hline 
   \textbf{Method}    &$ \bm{(n_c ,s)} $&  \textbf{nvar} &\textbf{l.b.} & \textbf{time} & $\bm{\varepsilon_}{rel}$\\
       \hline 
        mSOS &$(1,253), (1,2024), (1,4048) $  & $291745$& $-$& $-$& $- $ \\
 
 \multirow{2}{*}{TSP max.ch. $k=1$ }   & $(5950,1), (21,43), (1,484)$  & \multirow{2}{*}{ $241745$ }& \multirow{2}{*}{$- $}  & \multirow{2}{*}{$  - $}& \multirow{2}{*}{$ -$}\\
           &$(1,4048)$ & & &  \\
    
           \multirow{2}{*}{TSP min.ch. $k=1$ }   & $(462,2), (43,22), (209,48)$  & \multirow{2}{*}{ $13326$ }& \multirow{2}{*}{$0.50 $}  & \multirow{2}{*}{$  600 $}& \multirow{2}{*}{$ 1\mathrm{e}{-04}$ }\\
           &$(1,64) ,(1,688)$ & & &  \\
        
       NMT mSOS & $ (21,43), (1,64), (1,688) $  & $13496$ &$ 0.50$   & $  510 $ &  $3\mathrm{e}{-05}$ \\
         NMT TSP $k=1$  & $ (945,2), (21,48)$ &   $2646 $ &   $0.40 $&  $   3  $ &  $ 2\mathrm{e}{-01}$   \\ 
         NMT TSP $k=2$  & $ (21,43), (21,44), (1,688)$ & $13286$ & $ 0.50 $& $   400 $ & $3\mathrm{e}{-05}$   \\ 
       \hline 
    \end{tabular}
    \label{numerical-example-dense-static21}
    \end{table}

\subsection{$14$-elements simply-supported beam problem}
\label{sec:numerical-example-mbb14eb}

As the second example, we consider a symmetric, simply-supported beam shown in Fig.~\ref{fig:mbb14bc}. Here, the symmetric part of the design domain contains $8$ nodes, with nodes $\circled{a}$ and $\circled{f}$ supported in the horizontal direction and in rotation, and node $\circled{c}$ fixed in the vertical direction. A unit load is applied in the top-down direction at the node $\circled{f}$. The nodes are interconnected using $14$ elements made of a linear elastic material with the Young modulus $E=1$ and of the density $\rho = 1$. All elements share rectangular cross sections of widths $0.2$, see Fig.~\ref{fig:mbb14cs}. Consequently, we optimize the heights of the cross sections, and this results in a stiffness matrix that is degree-$3$ polynomial of the cross sections. 

\begin{figure}[!htbp]
\subfloat[\label{fig:mbb14bc}]{%
    \begin{tikzpicture}
		\scaling{1.5};
		
		\point{a}{0}{0};
		\point{b}{1}{0};
		\point{c}{2}{0};
		\point{d}{0.5}{0.5};
		\point{e}{1.5}{0.5};
		\point{f}{0}{1};
		\point{g}{1}{1};
		\point{h}{2}{1};
				
		\beam{2}{a}{b};
		\beam{2}{a}{d};
		\beam{2}{b}{c};
		\beam{2}{b}{d};
		\beam{2}{b}{e};
		\beam{2}{b}{g};
		\beam{2}{c}{e};
		\beam{2}{c}{h};
		\beam{2}{d}{f};
		\beam{2}{d}{g};
		\beam{2}{e}{g};
		\beam{2}{e}{h};
		\beam{2}{f}{g};
		\beam{2}{g}{h};
		
		\support{4}{a}[270];
		\support{4}{f}[270];
		\support{2}{c}[0];
		
		\load{1}{f}[90][1][0.12];
		\notation{1}{f}{$1$}[above right=2mm];
		
		\dimensioning{1}{a}{d}{-1.5}[$0.5$]
		\dimensioning{1}{d}{b}{-1.5}[$0.5$]
            \dimensioning{1}{b}{e}{-1.5}[$0.5$]
            \dimensioning{1}{e}{c}{-1.5}[$0.5$]
		\dimensioning{2}{a}{d}{-0.8}[$0.5$]
		\dimensioning{2}{d}{g}{-0.8}[$0.5$]
		
		\notation{1}{a}{\circled{$a$}}[align=center];
		\notation{1}{b}{\circled{$b$}}[align=center];
		\notation{1}{c}{\circled{$c$}}[align=center];
		\notation{1}{d}{\circled{$d$}}[align=center];
		\notation{1}{e}{\circled{$e$}}[align=center];
		\notation{1}{f}{\circled{$f$}}[align=center];
		\notation{1}{g}{\circled{$g$}}[align=center];
		\notation{1}{h}{\circled{$h$}}[align=center];
	\end{tikzpicture}
}
\subfloat[\label{fig:mbb14cs}]{%
    \raisebox{1cm}{\begin{tikzpicture}[scale=0.75]
	\scaling{0.15}
	\point{a}{0}{0};
	\point{b}{5}{0};
	\point{c}{5}{10};
	\draw[black, fill=gray, fill opacity=0.2] (0.0,0.0) rectangle ++(1,2);
	\dimensioning{1}{a}{b}{-0.75}[$0.2$];
	\dimensioning{2}{b}{c}{1.75}[$h_i$];
    \end{tikzpicture}}
}
\subfloat[\label{fig:mbb14opt}]{%
      \raisebox{1cm}{\includegraphics[width=0.4\linewidth]{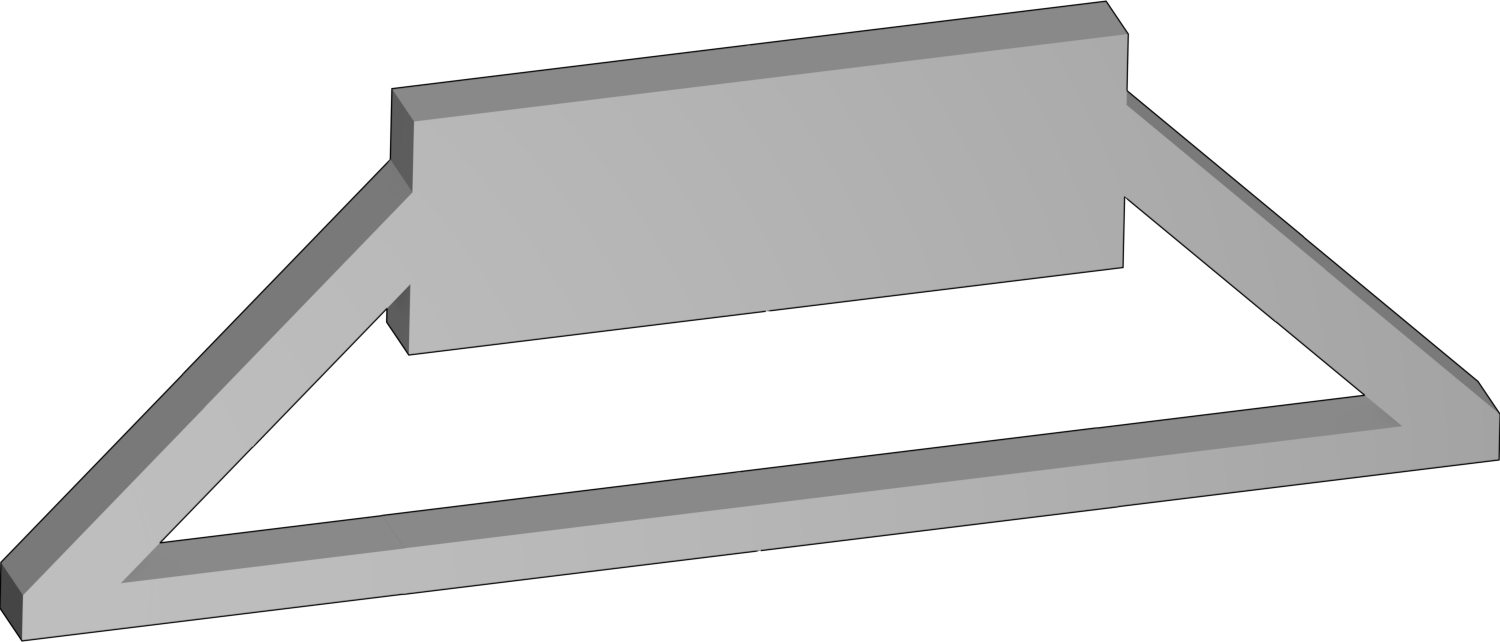}}
    }
    \caption{$14$-elements simply supported beam: (a) discretization and boundary conditions of the symmetric part, (b) cross-section parametrization, and (c) optimal symmetric design.}
\end{figure}

\subsection*{Compliance minimization}

First, we investigate the compliance minimization version of the problem, which requires an upper bound for the weight $\overline{w} = 0.25$. Using this setting, we were unable to certify optimality with the (lowest) degree-two relaxation of mSOS, and degree-three relaxation required more memory than what was available on the testing computer. Thus, we proceeded by using the sparsity techniques suggested in this paper, see Table~\ref{numerical-example-dense-reducedbasis-mbb14}. In particular, using degree-three relaxation with TSP and maximal chordal extension and sparsity order one allowed us to solve the problem globally in more than $30$ minutes with the relative optimality gap of $9\times10^{-6}$. However, a similar relative optimality gap was achieved using the NMT basis in only $40$~s and with the NMT basis with TSP and sparsity order $2$ in $45$~s. On the other hand, TSP with minimal chordal extension and sparsity order one was not sufficient for the convergence of the hierarchy. One would either need to increase the relaxation or the sparsity order. However, setting a higher sparsity order $k=2$ leads to a large number of cliques and larger number of variables than for TSP with maximal chordal extension (see Table \ref{tab:14-elements-compliance-TSP-min-ch-k=2}). This phenomenon was observed as well by using different algorithms for computing the chordal extension, notably minimum degree and maximum cardinality. This make using higher sparsity orders for TSP with minimal chordal extension non-suitable in problems where a higher relaxation order is required for convergence.

\begin{table}[h!]
 \caption{Computational results for compliance minimization of $14$-elements simply-supported beam problem and the relaxation order $r=3$.}
  \begin{tabular}{l l c c c  c }
    \hline 
  \textbf{Method}    &$ \bm{(n_c ,s)} $& \textbf{nvar}&\textbf{l.b.} & \textbf{time} & $\bm{\varepsilon_}{rel}$\\
        \hline
        mSOS  &$(16,136), (1,320), (1,816) $ & $54263$ & $-$   & $-$ & $-$ \\
         
 \multirow{2}{*}{TSP max.ch. $k=1$}  & $(2135,1),(15,31), (1,136)$  &\multirow{2}{*}{$28783$}  & \multirow{2}{*}{$210.27$}  &    \multirow{2}{*}{$2000$} &    \multirow{2}{*}{$9\mathrm{e}{-06}$}\\
       
         & $(1,256), (1,320)$ &    & & \\
          \multirow{2}{*}{TSP min.ch. $k=1$}   & $(31,2), (1,30), (15,40)$  & \multirow{2}{*}{  $4583$ }& \multirow{2}{*}{$113.63 $}  & \multirow{2}{*}{$  15  $ }& \multirow{2}{*}{$  1.46$ }\\
           &$(329,3),  (31,16) $ & & &  \\
           &$(329,3)$ & & &  \\

         NMT mSOS &  $(16,31), (1,46), (1,320) $ & $6369$ &  $210.27$   & $40$ & $2\mathrm{e}{-06} $\\
        
           NMT TSP $k=1$  & $ (35,2), (252,3) (15,40) $ & $1307$   & $91.66$ &  $2$ &  $2.05$    \\ 

            \multirow{2}{*}{NMT TSP $k=2$}   & $(1,3),(16,31), (14,32)$  & \multirow{2}{*}{$6264$}& \multirow{2}{*}{$210.27 $}  & \multirow{2}{*}{$  45  $}& \multirow{2}{*}{$6\mathrm{e}{-06}$}\\
           &$(1,320)$ & & &  \\
       \hline 
    \end{tabular}
 \label{numerical-example-dense-reducedbasis-mbb14}
    \end{table} 

  \begin{table}[h!]
 \caption{ TSP min.ch. using the minimum fill-in algorithm and with sparsity order $k=2$, for the weight minimization of $14$-elements a simply-supported beam and the relaxation order $r=3$.}
  \begin{tabular}{c c  }
    \hline 
  $ \bm{(n_c ,s)} $& \textbf{nvar}\\
        \hline         
        $ (91,4),(364,8), (16,18), (16,19), (16,20), (16,21),(16,22) ,(16,23), (16,24)$ &  \multirow{4}{*}{$42237$} \\
           $ (16,25),(16,26),(16,27), (16,28), (16,29),(16,30),(61,31),(14,45), (16,55) $&  \\ 
          $(14,58), (16,65), (16,74), (16,82), (16,89), (16,95), (16,100),(16,104)$&  \\ 
         $ (16,107),(16,109),(32,110), (1,122), (1,320)$ & \\
           \hline 
    \end{tabular}
 \label{tab:14-elements-compliance-TSP-min-ch-k=2}
    \end{table}

\subsubsection*{Weight minimization }
\label{sec:numerical-example-mbb14eb}

Having found the optimal compliance $c^*=210.27$ for the weight limit of $0.25$, we changed the problem to the weight minimization settings. This means that we fixed the upper bound of compliance to $\overline{c} = 210.27$ and expected the optimized weight to be less than or equal to $0.25$. In fact, the optimal weight is indeed $0.25$, which can be proven using mSOS-based hierarchies. Similarly to the previous case, the mSOS hierarchy is computationally intractable, and the same happened for the TSP variants. However, convergence occurred in the degree-five relaxation for the NMT basis version. In particular, NMT required more than $10$ minutes to obtain the relative optimality gap of $5\times10^{-5}$ and NMT with TSP and sparsity order two reached the relative optimality gap of $10^{-6}$ in $15$ minutes, see Table~\ref{numerical-example-dense-reducedbasis-mbb14eb-weight}.

\begin{table}[h!]
 \caption{Computational results for weight minimization of $14$-elements simply-supported beam problem and the relaxation order $r=5$.}
  \begin{tabular}{l l c c c  c }
    \hline 
  \textbf{Method}    &$ \bm{(n_c ,s)} $& \textbf{nvar}&\textbf{l.b.} & \textbf{time} & $\bm{\varepsilon_}{rel}$\\
        \hline
        \multirow{2}{*}{mSOS}  & $ (14,3060), (1,9520) $  & \multirow{2}{*}{$1961256 $}  &\multirow{2}{*}{$- $}   & \multirow{2}{*}{$- $}  & \multirow{2}{*}{$- $} \\
        &  $  (1,11628)$& & & & \\
        TSP max.ch. $k=1$  &$ -$ & $ - $ & $-$   & $-$ & $-$ \\
        TSP min.ch. $k=1$  &$ -$ & $ - $ & $-$   & $-$ & $-$ \\
       
         NMT mSOS &  $ (14,57), (1,71), (1,860)$ & $ 19887$ &  $ 0.25$   & $640 $ & $ 5\mathrm{e}{-05} $\\
  NMT TSP $k=1$  & $  (644,3), (14,80) $ & $2779$  & $0.09$ &  $9$  & $2.28$   \\ 
           NMT TSP $k=2$&  $ (196,44), (14,45), (1,860)$ & $ 18522$ &  $ 0.25$   & $ 1000 $ & $ 1\mathrm{e}{-06} $\\

       \hline 
    \end{tabular}
 \label{numerical-example-dense-reducedbasis-mbb14eb-weight}
    \end{table}

\subsection{$24$-elements modular frame structure}
\label{sec:numerical-example-frame24}

As the third problem, we adopt the $24$-elements modular frame structure introduced in \cite[Section 4.1]{tyburec2022global}. The structure contains $14$ nodes, with horizontal wind loads of magnitudes $1$, $1$, and $1/2$ applied to the nodes $\circled{c}$, $\circled{e}$ and $\circled{g}$, respectively, and vertical unit loads applied to the nodes $\circled{i}$, $\circled{j}$ and $\circled{k}$. Kinematic boundary conditions are enforced by clamped supports at the bottom nodes $\circled{a}$ and $\circled{b}$. The nodes are connected using $24$ elements made of the linear elastic material with the Young modulus $E=1$ and the density $\rho=1$; see Fig.~\ref{fig:frame24}.

\begin{figure}[!htbp]
\subfloat[\label{fig:frame24}]{%
    \begin{tikzpicture}
		\scaling{2.2}
		\point{a}{0.00}{0.0};
		\point{b}{1.50}{0.0}; 
		\point{c}{0.00}{1.0}; 
		\point{d}{1.50}{1.0}; 
		\point{e}{0.00}{2.0}; 
		\point{f}{1.50}{2.0}; 
		\point{g}{0.00}{3.0}; 
		\point{h}{1.50}{3.0}; 
		\point{i}{0.75}{1.0};
		\point{j}{0.75}{2.0}; 
		\point{k}{0.75}{3.0}; 
		\point{l}{0.75}{0.5}; 
		\point{m}{0.75}{1.5}; 
		\point{n}{0.75}{2.5}; 
		\point{o}{0.75}{0.0};
		
		\beam{2}{a}{c}; \notation{4}{a}{c}[$1$];
		\beam{2}{b}{d}; \notation{4}{b}{d}[$2$];
		\beam{2}{c}{e}; \notation{4}{c}{e}[$3$];
		\beam{2}{d}{f}; \notation{4}{d}{f}[$4$];
		\beam{2}{e}{g}; \notation{4}{e}{g}[$5$];
		\beam{2}{f}{h}; \notation{4}{f}{h}[$6$];
		\beam{2}{c}{i}; \notation{4}{c}{i}[$7$];
		\beam{2}{i}{d}; \notation{4}{i}{d}[$8$];
		\beam{2}{e}{j}; \notation{4}{e}{j}[$9$];
		\beam{2}{j}{f}; \notation{4}{j}{f}[$10$];
		\beam{2}{g}{k}; \notation{4}{g}{k}[$11$];
		\beam{2}{k}{h}; \notation{4}{k}{h}[$12$];
		\beam{2}{a}{l}; \notation{4}{a}{l}[$13$];
		\beam{2}{l}{d}; \notation{4}{l}{d}[$14$];
		\beam{2}{b}{l}; \notation{4}{b}{l}[$15$];
		\beam{2}{l}{c}; \notation{4}{l}{c}[$16$];
		\beam{2}{c}{m}; \notation{4}{c}{m}[$17$];
		\beam{2}{m}{f}; \notation{4}{m}{f}[$18$];
		\beam{2}{d}{m}; \notation{4}{d}{m}[$19$];
		\beam{2}{m}{e}; \notation{4}{m}{e}[$20$];
		\beam{2}{e}{n}; \notation{4}{e}{n}[$21$];
		\beam{2}{n}{h}; \notation{4}{n}{h}[$22$];
		\beam{2}{f}{n}; \notation{4}{f}{n}[$23$];
		\beam{2}{n}{g}; \notation{4}{n}{g}[$24$];

            \support{3}{a};
		\support{3}{b};

            \notation{1}{a}{$\circled{a}$}[align=center];
            \notation{1}{b}{$\circled{b}$}[align=center];
            \notation{1}{c}{$\circled{c}$}[align=center];
            \notation{1}{d}{$\circled{d}$}[align=center];
            \notation{1}{e}{$\circled{e}$}[align=center];
            \notation{1}{f}{$\circled{f}$}[align=center];
            \notation{1}{g}{$\circled{g}$}[align=center];
            \notation{1}{h}{$\circled{h}$}[align=center];
            \notation{1}{i}{$\circled{i}$}[align=center];
            \notation{1}{j}{$\circled{j}$}[align=center];
            \notation{1}{k}{$\circled{k}$}[align=center];
            \notation{1}{l}{$\circled{l}$}[align=center];
            \notation{1}{m}{$\circled{m}$}[align=center];
            \notation{1}{n}{$\circled{n}$}[align=center];
		
		\load{1}{i}[90][0.7][0.15]; \notation{1}{i}{$1$}[above=3mm,xshift=1.5mm];
		\load{1}{j}[90][0.7][0.15]; \notation{1}{j}{$1$}[above=3mm,xshift=1.5mm];
		\load{1}{k}[90][0.7][0.15]; \notation{1}{k}{$1$}[above=3mm,xshift=1.5mm];
		\load{1}{c}[180][0.7][0.15]; \notation{1}{c}{$1$}[below=0mm, xshift=-3mm];
		\load{1}{e}[180][0.7][0.15]; \notation{1}{e}{$1$}[below=0mm, xshift=-3mm];
		\load{1}{g}[180][0.35][0.15]; \notation{1}{g}{$0.5$}[below=0mm, xshift=-3mm];
		
		\dimensioning{2}{o}{l}{4.0}[$0.5$];
		\dimensioning{2}{l}{i}{4.0}[$0.5$];
		\dimensioning{2}{i}{m}{4.0}[$0.5$];
		\dimensioning{2}{m}{j}{4.0}[$0.5$];
		\dimensioning{2}{j}{n}{4.0}[$0.5$];
		\dimensioning{2}{n}{k}{4.0}[$0.5$];
		\dimensioning{1}{a}{o}{-0.75}[$0.75$];
		\dimensioning{1}{o}{b}{-0.75}[$0.75$];
	\end{tikzpicture}
}
\subfloat[\label{fig:frame24cs}]{%
    \raisebox{4.25cm}{\begin{minipage}{0.15\linewidth}
    \centering
	\squared{1}--\squared{6}:\\
	\begin{tikzpicture}
		\scaling{1.0}
		\point{a}{0.0}{0.0};\point{a0}{1.0}{0.0};
		\point{b}{0.0}{0.1};\point{b0}{1.0}{0.1};
		\point{c}{0.45}{0.1};\point{c0}{0.55}{0.1};
		\point{d}{0.45}{0.9};\point{d0}{0.55}{0.9};
		\point{e}{0.0}{0.9};\point{e0}{1.0}{0.9};
		\point{f}{0.0}{1.0};\point{f0}{1.0}{1.0};
		\draw[black, thick, fill=black!25] (a0) -- (b0) -- (c0) -- (d0) -- (e0) -- (f0) -- (f) -- (e) -- (d) -- (c) -- (b) -- (a) -- cycle;
		\dimensioning{2}{a}{f}{1.25}[$10t_e$];
		\dimensioning{1}{a}{a0}{1.25}[$10t_e$];
		\draw [-stealth](0.25,0.60) -- (0.5,0.75);
		\draw [-stealth](0.25,0.60) -- (0.35,0.95);
		\draw [-stealth](0.25,0.60) -- (0.35,0.05);
		\draw (-0.2,0.6) -- node[above=-0.5mm]{$t_e$}(0.025,0.6) -- (0.25,0.60);
	\end{tikzpicture}\\
        \vspace{3mm}
	\squared{7}--\squared{12}:\\
	\begin{tikzpicture}
		\scaling{1.0}
		\point{a}{0.0}{0.0};\point{a0}{1.0}{0.0};
		\point{b}{0.0}{0.1};\point{b0}{1.0}{0.1};
		\point{c}{0.45}{0.1};\point{c0}{0.55}{0.1};
		\point{d}{0.45}{1.9};\point{d0}{0.55}{1.9};
		\point{e}{0.0}{1.9};\point{e0}{1.0}{1.9};
		\point{f}{0.0}{2.0};\point{f0}{1.0}{2.0};
		\draw[black, thick, fill=black!25] (a0) -- (b0) -- (c0) -- (d0) -- (e0) -- (f0) -- (f) -- (e) -- (d) -- (c) -- (b) -- (a) -- cycle;
		\dimensioning{2}{a}{f}{1.25}[$20t_e$];
		\dimensioning{1}{a}{a0}{2.25}[$10t_e$];
		\draw [-stealth](0.25,0.60) -- (0.5,0.75);
		\draw [-stealth](0.25,0.60) -- (0.35,1.95);
		\draw [-stealth](0.25,0.60) -- (0.35,0.05);
		\draw (-0.2,0.6) -- node[above=-0.5mm]{$t_e$}(0.025,0.6) -- (0.25,0.60);
	\end{tikzpicture}\\
	\vspace{3mm}
	\squared{13}--\squared{24}:\\
	\begin{tikzpicture}
		\point{a}{-1.0}{0};\point{a0}{0.0}{0.0};
		\draw[thick, fill=black!25] (0.0,0) arc (0:360:0.5);
		\draw[thick, fill=white] (-0.1,0) arc (0:360:0.4);
		\dimensioning{1}{a}{a0}{0.65}[$10t_e$];
		\draw [-stealth](-0.3,-0.1) -- (-0.05,0.05);
		\draw (-0.7,-0.1) -- node[above=-0.5mm]{$t_e$}(-0.4,-0.1) -- (-0.3,-0.1);
	\end{tikzpicture}
        \end{minipage}}
}
\subfloat[\label{fig:frame24opt}]{%
      \includegraphics[width=0.37\linewidth]{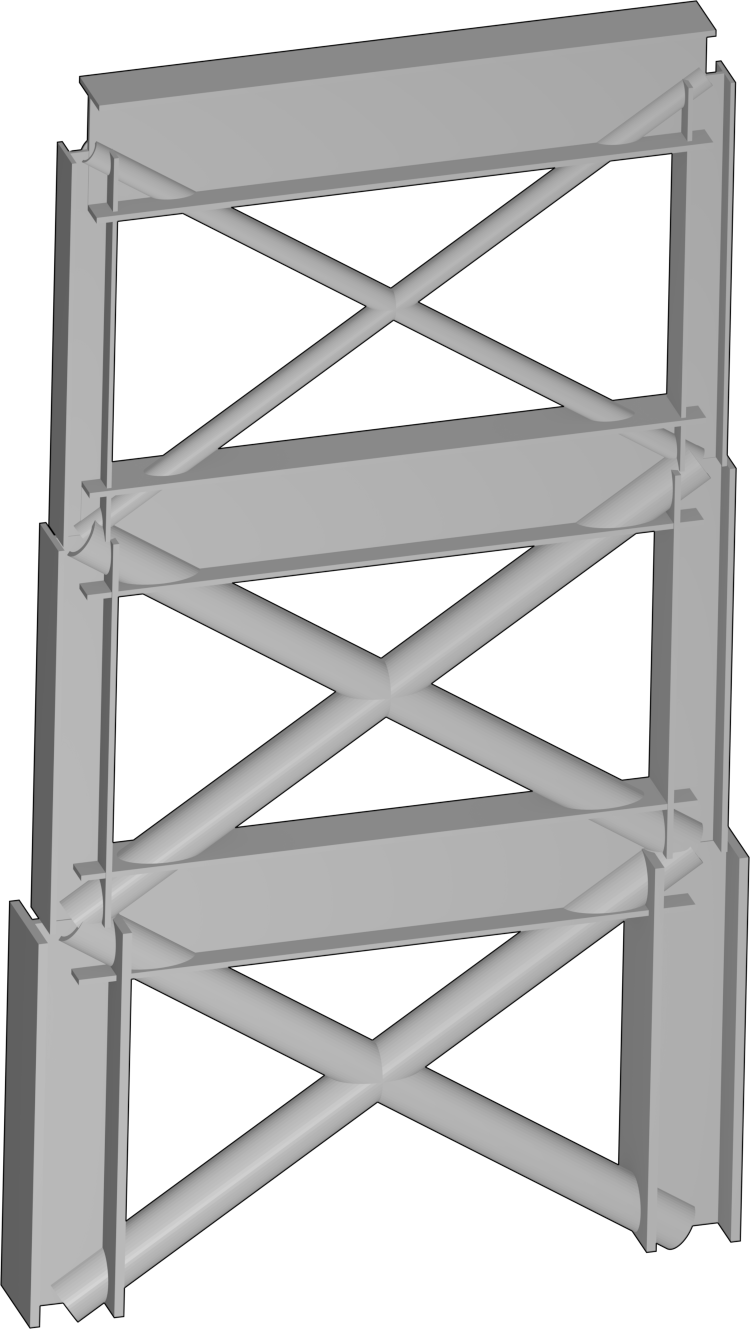}
    }
    \caption{$24$-elements modular frame structure: (a) discretization and boundary conditions, (b) cross-section parametrizations, and (c) optimal design.}
\end{figure}

We restrict the structural topology to be symmetric. This can be justified by the observation that wind loads can act on either side of the structure. Moreover, we also enforce equal cross sections of the diagonal elements within each storey. Consequently, we have $a_1=a_2$, $a_3=a_4$ and $a_5=a_6$ for columns, $a_7=a_8$, $a_9=a_{10}$ and $a_{11}=a_{12}$ for horizontal beams and $a_{13}=a_{14}=a_{15}=a_{16}$, $a_{17}=a_{18}=a_{19}=a_{20}$ and $a_{21}=a_{22}=a_{23}=a_{24}$ for diagonals. Thus, we have $9$ independent cross-sectional variables. Moreover, we assign different cross-sectional shapes to the elements: columns have an I-shaped cross section, beams have an H-shaped section, and diagonals are assigned a thin-walled circular section; see Fig.~\ref{fig:frame24cs}, which implies that the stiffness matrix is a degree-two polynomial of the cross-section areas.

\subsubsection*{Weight optimization}

First, we consider the setting in \cite{tyburec2022global}, which is the minimization of weight. For this case, we set the compliance bound $\overline{c}=5,000$. Using the mSOS hierarchy, convergence occurs in the third relaxation based on the rank-flatness condition, and the associated relative optimality gap is evaluated as $3\times10^{-5}$. The optimal design appears in Fig.~\ref{fig:frame24opt}. However, the relaxation of degree three is not sufficient for the convergence of the other hierarchies, and also the speed-up by TSP is insignificant; see Table \ref{tab:24_weight_r3}. This is because, in the lowest relaxation, the size of the stiffness matrix ($37\times37$) is larger than the moment matrix ($10\times10$), and this also translates into higher-degree relaxations. 

\begin{table}[h!]
 \caption{Computational results for weight minimization of $24$-elements modular frame structure and the relaxation order $r=3$.}
  \begin{tabular}{l l c c c  c }
    \hline 
  \textbf{Method}    &$ \bm{(n_c ,s)} $& \textbf{nvar}&\textbf{l.b.} & \textbf{time} & $\bm{\varepsilon_}{rel}$\\
        \hline
        mSOS  &$ (9,55), (1,220), (1,2035) $ & $ 5005$ & $0.1180$   & $1600$ & $3\mathrm{e}{-05}$ \\

         \multirow{2}{*}{TSP max.ch. $k=1$} & $(444,1),(1,2035)$ &  \multirow{2}{*}{$4920$} &
          \multirow{2}{*}{$0.1179$} &  \multirow{2}{*}{$ 1500$} &  \multirow{2}{*}{$ 1\mathrm{e}{-04}$} \\
           & $(1,100), (9,19) $ & & & & \\

          \multirow{2}{*}{TSP min.ch. $k=1$} & $(90,2),(19,2)$ &  \multirow{2}{*}{$1158$} &
          \multirow{2}{*}{$0.1165$} &  \multirow{2}{*}{$ 1500$} &  \multirow{2}{*}{$ 1\mathrm{e}{-02}$} \\
           & $(36,111), (1,703) $ & & & & \\

         NMT mSOS &  $ (9,19), (1,28), (1,703)$ & $ 1194$ &  $0.1176 $   & $ 40$ & $ 4\mathrm{e}{-03}  $\\
 NMT TSP $k=1$  & $ (189,2),(9,111)   $ & $486$  & $0.0838$  & $3.4$ & $0.50$   \\ 
           NMT TSP $k=2$ & $ (9,19),(9,20), (1,703)$ & $1158 $ &  $0.1176 $   & $ 35$ & $ 4\mathrm{e}{-03}  $\\

       \hline 
    \end{tabular}
 \label{tab:24_weight_r3}
    \end{table} 

Consequently, solving higher relaxations using the canonical basis with or without TSP is computationally intractable using testing hardware. However, convergence can still be achieved using the NMT basis. In particular, we need the relaxation order $5$ to reach the relative optimality gap of $5\times10^{-6}$ and $6\times10^{-6}$ using the NMT mSOS hierarchy and the NMT TSP hierarchy with the sparsity order $3$, see Table \ref{tab:24_weight_r5}.

\begin{table}[h!]
 \caption{Computational results using NMT basis for weight minimization of $24$-elements modular frame structure and the relaxation order $r=5$.}
  \begin{tabular}{l l c c c  c }
    \hline 
  \textbf{Method}    &$ \bm{(n_c ,s)} $& \textbf{nvar}&\textbf{l.b.} & \textbf{time} & $\bm{\varepsilon_}{rel}$\\
        \hline

         NMT mSOS  &  $ (9,37), (1,46), (1,1369)$ & $ 6270 $ &  $0.1180 $   & $ 381 $ & $ 5\mathrm{e}{-06}  $\\
          NMT TSP $k=1$  & $  (171,2), (99,3), (9,185)   $ & $1098$  &  $0.0901$  &  $9$  &  $0.35$   \\ 
            NMT TSP $k=2$  & $  (81,21), (9,22), (9,777)  $ &  $3930$  &  $0.1179$  & $367 $ &  $2\mathrm{e}{-04}$   \\ 
           NMT TSP $k=3$ & $ (9,22),(10,37), (1,1369)$ & $  6234$ &  $0.1180 $   & $ 377 $ & $ 6\mathrm{e}{-06}  $\\

       \hline 
    \end{tabular}
  \label{tab:24_weight_r5}
    \end{table}

\subsubsection*{Compliance optimization}

Second, we use the optimal weight from the weight optimization setting as a constraint in the compliance minimization variant, $\overline{w}=0.118$. In this case, the hierarchy converges in the third relaxation. Numerical convergence can be proved using TSP with maximal chordal extension and sparsity order one in less than $2$ hours, with TSP with minimal chordal extension and sparsity order one in $4$ minutes, with the NMT basis in $5$ minutes, and with the NMT basis and TSP in $2$ minutes. On the other hand, convergence does not follow from the results of the mSOS hierarchy, since the rank-flatness condition does not hold and the relative optimality gap is only $10^{-3}$. We attribute this to a numerically inaccurate solution of the full hierarchy. We wish to emphasize here that even if the mSOS was accurate, the solution time is more than $100$-times longer than the NMT TSP technique with sparsity order $2$.

\begin{table}[h!]
\caption{Computational results for compliance minimization of $24$-elements modular frame structure and the relaxation order $r=3$.}
  \begin{tabular}{l l c c c  c }
    \hline 
  \textbf{Method}    &$ \bm{(n_c ,s)} $& \textbf{nvar}&\textbf{l.b.} & \textbf{time} & $\bm{\varepsilon_}{rel}$\\
        \hline
        mSOS  &$ (11,66), (1,286), (1,2442) $ & $  8007$ & $ 4995.00  $   & $ 15000$ & $ 1\mathrm{e}{-03}$ \\

         \multirow{2}{*}{TSP max.ch. $k=1$} & $(615,1),(10,21), (1,66)$ &  \multirow{2}{*}{$ 7587$} &
          \multirow{2}{*}{$5000.65 $} &  \multirow{2}{*}{$ 6000$} &  \multirow{2}{*}{$ 2\mathrm{e}{-06}$} \\
           & $(1,121), (1,2442) $ & & & & \\

          \multirow{3}{*}{TSP min.ch. $k=1$} & $(120,2), (36,3), (21,11)$ &  \multirow{3}{*}{$1577$} &
          \multirow{3}{*}{$5000.65 $} &  \multirow{3}{*}{$ 220$} &  \multirow{3}{*}{$ 5\mathrm{e}{-06}$} \\
           & $(1,20), (44,111) $ & & & & \\ 
        & $(1,148), (1,777) $ & & & & \\ 
       
         NMT mSOS &  $ (11,21), (1,31), (1,777)$ & $ 1605 $ &  $ 5000.65 $   & $ 300$ & $ 3\mathrm{e}{-06}  $\\

           NMT TSP $k=2$ & $ (20,21),(1,22), (1,777)$ & $ 1560 $ &  $5000.65$   & $ 130$ & $ 1\mathrm{e}{-06}  $\\

       \hline 
    \end{tabular}
  \label{tab8}
    \end{table}

\begin{remark}
 From an accuracy perspective, NMT mSOS is always expected to perform as well as or better than NMT TSP, as its feasible set is tighter than that of NMT TSP. However, in some cases, NMT TSP exhibits a slightly smaller relative error than NMT mSOS (as seen in Table \ref{tab8}), likely due to numerical issues or solver behavior (Mosek). Similarly, in terms of solving time, NMT TSP is generally expected to be faster than NMT mSOS. However, in certain cases (such as in Table \ref{numerical-example-dense-reducedbasis-mbb14eb-weight}), it can be slower. In these instances, we observe that Mosek requires a higher number of iterations to solve the relaxation.
\end{remark}
 
    \subsection{$24$-elements problems by Toragay \textit{et al.}~\cite{toragay2022exact}}

    As the fourth set of problems, we investigate slightly modified weight minimization problems introduced by Toragay \textit{et al.}~\cite{toragay2022exact}. In particular, we adopt a finite element discretization consisting of $9$ nodes interconnected by $24$ elements that are assigned circular cross sections, rendering the stiffness matrix a degree-two polynomial function of the cross sections. Individual elements are made of a linear elastic material with the Young modulus $E=109$~GPa and the dimensionless density $\rho=1$.

    Three types of boundary conditions are established. First, we have clamped supports at the nodes $\circled{a}$ and $\circled{g}$ and a vertical force $F$ at the node $\circled{f}$, see Fig.~\ref{fig:toragay1bc}. Second, we fix the nodes $\circled{a}$, $\circled{b}$ and $\circled{d}$ and apply a vertical load $F$ at the node $\circled{i}$; see Fig.~\ref{fig:toragay2bc}. Finally, we fix the nodes $\circled{a}$, $\circled{b}$ and $\circled{c}$ and apply a vertical load $F$ at the node $\circled{h}$, Fig.~\ref{fig:toragay3bc}. For each type of boundary condition, we consider three values of the load $F$: $25$~kN, $50$~kN, and $75$~kN. Thus, we have $9$ testing instances in total.

    \begin{figure}[!htbp]
\subfloat[\label{fig:toragay1bc}]{%
    \begin{tikzpicture}
		\scaling{0.0475};
		
		\point{a}{0.000000}{0.000000}
            \point{b}{0.000000}{25.000000}
            \point{c}{0.000000}{50.000000}
            \point{d}{25.000000}{0.000000}
            \point{e}{25.000000}{25.000000}
            \point{f}{25.000000}{50.000000}
            \point{g}{50.000000}{0.000000}
            \point{h}{50.000000}{25.000000}
            \point{i}{50.000000}{50.000000}
            \beam{2}{a}{d}
            \beam{2}{b}{e}
            \beam{2}{c}{f}
            \beam{2}{d}{g}
            \beam{2}{e}{h}
            \beam{2}{f}{i}
            \beam{2}{a}{b}
            \beam{2}{b}{c}
            \beam{2}{d}{e}
            \beam{2}{e}{f}
            \beam{2}{g}{h}
            \beam{2}{h}{i}
            \beam{2}{a}{e}
            \beam{2}{b}{f}
            \beam{2}{d}{h}
            \beam{2}{e}{i}
            \beam{2}{b}{d}
            \beam{2}{c}{e}
            \beam{2}{e}{g}
            \beam{2}{f}{h}
            \beam{2}{a}{f}
            \beam{2}{d}{i}
            \beam{2}{c}{d}
            \beam{2}{f}{g}

		\support{3}{a}[0];
		\support{3}{g}[0];
		
		\load{1}{f}[90][-0.75][1.0];
		\notation{1}{f}{$F$}[above right=2mm];
		
		\dimensioning{1}{a}{d}{-0.8}[$25$~mm]
		\dimensioning{1}{d}{g}{-0.8}[$25$~mm]
		\dimensioning{2}{a}{b}{-0.6}[$25$~mm]
		\dimensioning{2}{b}{c}{-0.6}[$25$~mm]
		
		\notation{1}{a}{$\circled{a}$}[align=center];
            \notation{1}{b}{$\circled{b}$}[align=center];
            \notation{1}{c}{$\circled{c}$}[align=center];
            \notation{1}{d}{$\circled{d}$}[align=center];
            \notation{1}{e}{$\circled{e}$}[align=center];
            \notation{1}{f}{$\circled{f}$}[align=center];
            \notation{1}{g}{$\circled{g}$}[align=center];
            \notation{1}{h}{$\circled{h}$}[align=center];
            \notation{1}{i}{$\circled{i}$}[align=center];
	\end{tikzpicture}
}%
\subfloat[\label{fig:toragay2bc}]{%
    \begin{tikzpicture}
		\scaling{0.0475};
		
		\point{a}{0.000000}{0.000000}
            \point{b}{0.000000}{25.000000}
            \point{c}{0.000000}{50.000000}
            \point{d}{25.000000}{0.000000}
            \point{e}{25.000000}{25.000000}
            \point{f}{25.000000}{50.000000}
            \point{g}{50.000000}{0.000000}
            \point{h}{50.000000}{25.000000}
            \point{i}{50.000000}{50.000000}
            \beam{2}{a}{d}
            \beam{2}{b}{e}
            \beam{2}{c}{f}
            \beam{2}{d}{g}
            \beam{2}{e}{h}
            \beam{2}{f}{i}
            \beam{2}{a}{b}
            \beam{2}{b}{c}
            \beam{2}{d}{e}
            \beam{2}{e}{f}
            \beam{2}{g}{h}
            \beam{2}{h}{i}
            \beam{2}{a}{e}
            \beam{2}{b}{f}
            \beam{2}{d}{h}
            \beam{2}{e}{i}
            \beam{2}{b}{d}
            \beam{2}{c}{e}
            \beam{2}{e}{g}
            \beam{2}{f}{h}
            \beam{2}{a}{f}
            \beam{2}{d}{i}
            \beam{2}{c}{d}
            \beam{2}{f}{g}

		\support{3}{a}[0];
		\support{3}{d}[0];
            \support{3}{b}[270];
		
		\load{1}{i}[90][-0.75][1.0];
		\notation{1}{i}{$F$}[above left=2mm];
		
		\dimensioning{1}{a}{d}{-0.8}[$25$~mm]
		\dimensioning{1}{d}{g}{-0.8}[$25$~mm]
		\dimensioning{2}{a}{b}{-0.6}[$25$~mm]
		\dimensioning{2}{b}{c}{-0.6}[$25$~mm]
		
		\notation{1}{a}{$\circled{a}$}[align=center];
            \notation{1}{b}{$\circled{b}$}[align=center];
            \notation{1}{c}{$\circled{c}$}[align=center];
            \notation{1}{d}{$\circled{d}$}[align=center];
            \notation{1}{e}{$\circled{e}$}[align=center];
            \notation{1}{f}{$\circled{f}$}[align=center];
            \notation{1}{g}{$\circled{g}$}[align=center];
            \notation{1}{h}{$\circled{h}$}[align=center];
            \notation{1}{i}{$\circled{i}$}[align=center];
	\end{tikzpicture}
}%
\subfloat[\label{fig:toragay3bc}]{%
    \begin{tikzpicture}
		\scaling{0.0475};
		
		\point{a}{0.000000}{0.000000}
            \point{b}{0.000000}{25.000000}
            \point{c}{0.000000}{50.000000}
            \point{d}{25.000000}{0.000000}
            \point{e}{25.000000}{25.000000}
            \point{f}{25.000000}{50.000000}
            \point{g}{50.000000}{0.000000}
            \point{h}{50.000000}{25.000000}
            \point{h2}{53.000000}{25.000000}
            \point{i}{50.000000}{50.000000}
            \beam{2}{a}{d}
            \beam{2}{b}{e}
            \beam{2}{c}{f}
            \beam{2}{d}{g}
            \beam{2}{e}{h}
            \beam{2}{f}{i}
            \beam{2}{a}{b}
            \beam{2}{b}{c}
            \beam{2}{d}{e}
            \beam{2}{e}{f}
            \beam{2}{g}{h}
            \beam{2}{h}{i}
            \beam{2}{a}{e}
            \beam{2}{b}{f}
            \beam{2}{d}{h}
            \beam{2}{e}{i}
            \beam{2}{b}{d}
            \beam{2}{c}{e}
            \beam{2}{e}{g}
            \beam{2}{f}{h}
            \beam{2}{a}{f}
            \beam{2}{d}{i}
            \beam{2}{c}{d}
            \beam{2}{f}{g}

		\support{3}{a}[270];
		\support{3}{b}[270];
            \support{3}{c}[270];
		
		\load{1}{h2}[90][0.75][0.15];
		\notation{1}{h}{$F$}[above right=2mm];
		
		\dimensioning{1}{a}{d}{-0.8}[$25$~mm]
		\dimensioning{1}{d}{g}{-0.8}[$25$~mm]
		\dimensioning{2}{a}{b}{-0.6}[$25$~mm]
		\dimensioning{2}{b}{c}{-0.6}[$25$~mm]
		
		\notation{1}{a}{$\circled{a}$}[align=center];
            \notation{1}{b}{$\circled{b}$}[align=center];
            \notation{1}{c}{$\circled{c}$}[align=center];
            \notation{1}{d}{$\circled{d}$}[align=center];
            \notation{1}{e}{$\circled{e}$}[align=center];
            \notation{1}{f}{$\circled{f}$}[align=center];
            \notation{1}{g}{$\circled{g}$}[align=center];
            \notation{1}{h}{$\circled{h}$}[align=center];
            \notation{1}{i}{$\circled{i}$}[align=center];
	\end{tikzpicture}
}\\
\subfloat[\label{fig:toragay1opt}]{%
      \includegraphics[width=0.3\linewidth]{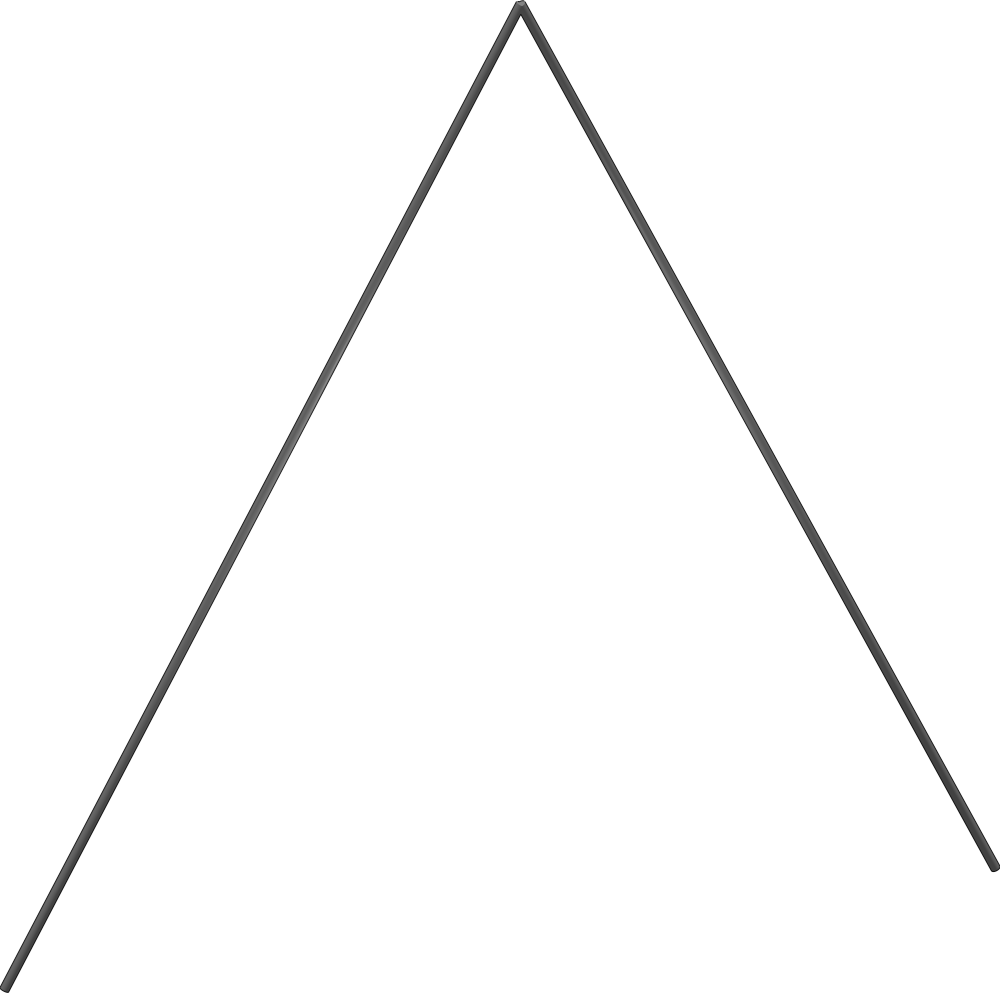}
    }%
\hfill\subfloat[\label{fig:toragay2opt}]{%
      \includegraphics[width=0.3\linewidth]{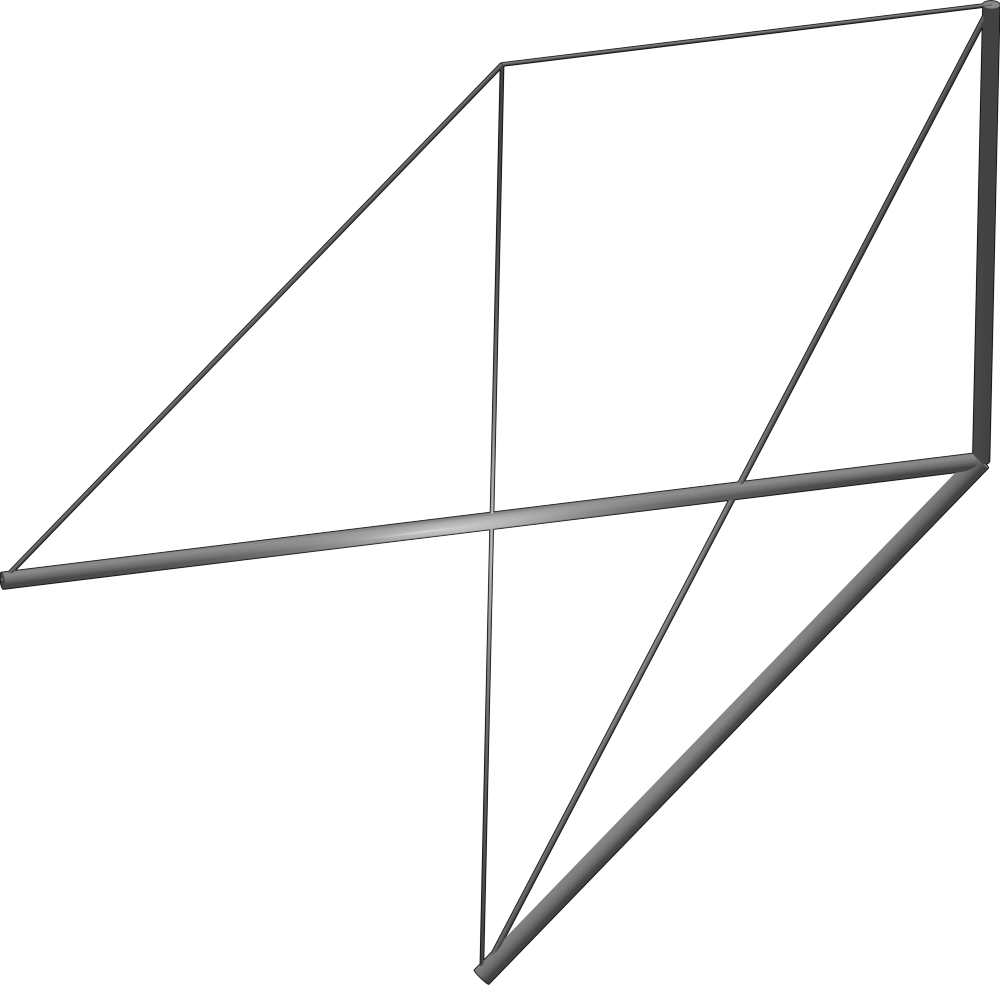}
    }%
\hfill\subfloat[\label{fig:toragay3opt}]{%
      \includegraphics[width=0.3\linewidth]{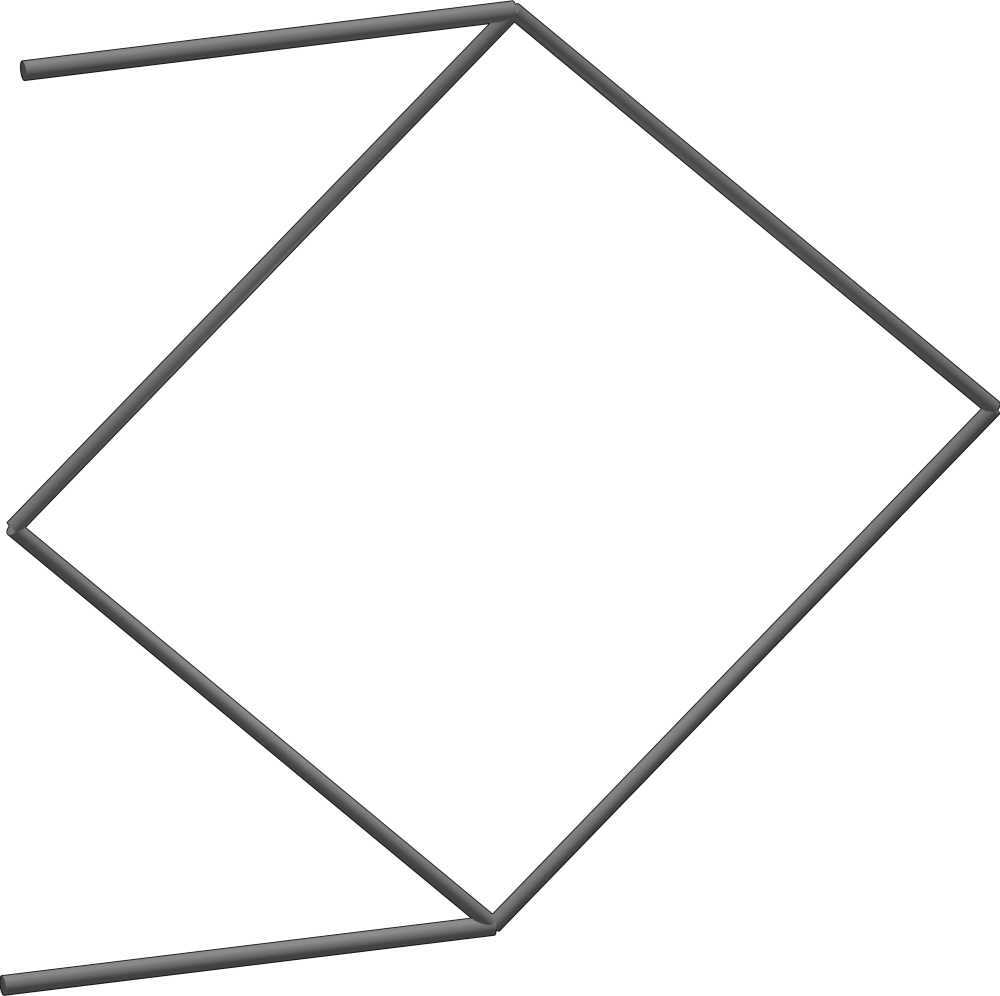}
    }
    \caption{$24$-elements problems by Toragay \textit{et al.}~\cite{toragay2022exact}: (a)--(c) discretization and boundary conditions, (d)--(f) the corresponding optimal designs.}
\end{figure}

    The setting used in \cite{toragay2022exact} considered weight minimization problems with maximum displacement constraints $u_{\max} = 0.095$~mm, upper bounds $a_{\mathrm{\max}} = \pi/4$~mm$^2$ on the cross-section areas, semicontinuous lower bounds on the cross-section areas--they are either zero or greater than $a_{\min} = \pi/25$~mm$^2$, and a constraint that prevents intersecting frame elements in a final design. 

    Since for all types of boundary conditions we have a single load only, recall Figs.~\ref{fig:toragay1bc}--c, and because the maximum displacement occurs at the point and direction of the applied load, the displacement constraints can actually be reduced into single compliance constraints. In particular, we have $\overline{c} = F u_{\max}$. Furthermore, we also adopt the upper bounds on the cross-sectional areas. On the other hand, we do not consider semicontinuous constraints as well as the prevention of virtually intersecting elements. Both of these constraints are still degree-two polynomials in the cross section variables and can thus be simply included in the formulation, but because the theory developed in \cite{tyburec2022global} does not apply to this setting, we omit them. Moreover, it turns out that in more than half of the test instances, these constraints are automatically satisfied.
    
    The numerical results of the computations appear summarized in Table~\ref{tab:toragay}, comparing the objective function and the solution time of NMT mSOS with the lowest solution time and the best objective function reported in \cite{toragay2022exact}. In addition, we also report if the design found by our technique satisfies the original (neglected) constraints. We note here, that we report the best results of  \cite{toragay2022exact}, where multiple formulations of the problem were investigated, and also used a considerably more powerful workstation.
    
    From the table it follows that for the boundary conditions in Fig.~\ref{fig:toragay1bc}, we solved the problem in $0.1$~s. For the load $25$~kN, the solution did not satisfy the semicontinuous constraints, and thus our objective function value is lower than the optimal one reported in \cite{toragay2022exact}. On the other hand, for the loads of $50$~kN and $75$~kN, we reached a global solution that satisfied all neglected constraints and this was achieved in shorter times than in \cite{toragay2022exact}. The optimal design for $F=50$~kN is shown in Fig.~\ref{fig:toragay1opt}.

    Using the boundary conditions in Fig.~\ref{fig:toragay2bc}, only the case of $25$~kN satisfied the neglected constraints and the objective function value matched the value reported in \cite{toragay2022exact}. The corresponding solution time using the NMT technique was by $40$~s longer. For the cases of $50$~kN and $75$~kN, Toragay \textit{et al.}~\cite{toragay2022exact} were unable to solve the problem globally and terminated the optimization in $5$ hours. We solved these instances in less than two minutes, yielding better objective functions, but the neglected constraints are not satisfied due to the intersecting elements; see Fig.~\ref{fig:toragay2opt} for our optimal design for $F=50$~kN.

    Finally, for the boundary conditions in Fig.~\ref{fig:toragay3bc}, we were able to improve the results presented in \cite{toragay2022exact} in two cases. In particular, for the cases of $25$~kN and $50$~kN (Fig.~\ref{fig:toragay3opt}) our designs satisfy neglected constraints and provide the value of the globally optimal objective function. Remarkably, the solution time was again less than two minutes. For the case of $75$~kN, we obtained a global solution in a similar time, but the neglected constraints were not satisfied.

     \begin{table}[h!]
           \caption{Computational results for weight minimization of $24$-elements problems by Toragay \textit{et al.} \cite{toragay2022exact} and the results obtained using NMT mSOS at relaxation order $r=1$ (setting of Fig.~\ref{fig:toragay1bc}) and $r=2$ (Figs.~\ref{fig:toragay2bc} and \ref{fig:toragay3bc}). Bold values denote globally optimality.}
 \centering   \begin{tabular}{ c  l c   c c }
    \hline
   \textbf{Load (kN)}  & \textbf{Model} & \textbf{Fig.~\ref{fig:toragay1bc}} & \textbf{Fig.~\ref{fig:toragay2bc}}  &  \textbf{Fig.~\ref{fig:toragay3bc}}   \\ \hline   
    \multirow{5}{*}{$25$}   & Toragay \textit{et al.} l.b. & $\bm{13.58}$  &  $\bm{37.71}$   &  $54.32$        \\
   
     & Toragay \textit{et al.} time (s) & $0.76$  &  $61.10$   & $18000$         \\
   
    & NMT mSOS l.b. &  $\bm{9.43}$  &  $\bm{37.71}$   & $ \bm{54.29}$     \\
   
     & NMT mSOS time (s) & $0.14$  &  $107$   &  $115$      \\

   & $\varepsilon_{rel}$ & $7.9\mathrm{e}{-6}$ & $2.2\mathrm{e}{-6}$ & $3.0\mathrm{e}{-7}$\\
 
   & C.satisfaction & no  &   yes  &  yes     \\
   
   \hline
\multirow{5}{*}{$50$}   &Toragay \textit{et al.} l.b. &  $\bm{18.86}$  & $75.51$    &  $110.36$      \\
  
     & Toragay \textit{et al.} time (s)  & $1.28$  & $18000$    & $18000$     \\
   
    & NMT mSOS l.b.&  $\bm{18.86}$&  $\bm{75.40}$   & $\bm{108.52}$      \\
  
     &NMT mSOS time (s)  &  $0.1$ &  $117$   &  $113$       \\

     & $\varepsilon_{rel}$ & $1.3\mathrm{e}{-5}$ & $3.1\mathrm{e}{-6}$& $4.3\mathrm{e}{-7}$\\
  
    &C.satisfaction & yes  &  no     &  yes       \\
   \hline
\multirow{5}{*}{$75$}   & Toragay \textit{et al.} l.b. & $\bm{28.30}$  &  $126.68$ & $164.68$     \\
  
     & Toragay \textit{et al.} time (s) &  $1.45$  & $18000$    & $18000$        \\
   
    & NMT mSOS l.b. & $\bm{28.29}$  &  $\bm{113.11}$   &  $\bm{162.80}$    \\
   
     & NMT mSOS time (s) & $0.1$  &  $ 120 $   &  $106$       \\

     & $\varepsilon_{rel}$ & $1.9\mathrm{e}{-5}$ & $1.2\mathrm{e}{-6}$ & $6.5\mathrm{e}{-7}$\\
 
   & C.satisfaction& yes   &  no     &  no    \\
   \hline
    \end{tabular}
        \label{tab:toragay}
    \end{table}

\subsection{$39$-element cantilever}

As the last illustration, we consider the problem containing $39$-elements and $16$ nodes; see Fig.~\ref{fig:larger4bc}. The nodes $\circled{a}$, $\circled{b}$, $\circled{c}$, and $\circled{d}$ are fixed, and a vertical load of magnitude $100$~kN is applied to the node $\circled{m}$. The elements are made of a linear elastic material of the Young modulus $E=109$~GPa and the dimensionless density $\rho=1$.

We assume a weight minimization problem with $\overline{c} = 4.75$ in particular. The problem is already too large to be solved using the mSOS hierarchy due to the $39$ design variables and the stiffness matrix of the size $36 \times 36$. Thus, we resort to using the NMT basis instead. Using NMT, solution of the second relaxation took more than an hour and provided the design shown in Figure \ref{fig:larger4opt} with a relative optimality gap of $5 \times 10^{-5}$, proving the approximate global optimality. When using NMT with TSP and the sparsity order one, a slightly worse design was found. However, this time the optimization took $8$~s only.

\begin{figure}[!htbp]
\subfloat[\label{fig:larger4bc}]{%
\begin{tikzpicture}
\scaling{0.0725}
\point{a}{0.000000}{0.000000}
\point{b}{0.000000}{16.666667}
\point{c}{0.000000}{33.333333}
\point{d}{0.000000}{50.000000}
\point{e}{16.666667}{0.000000}
\point{f}{16.666667}{16.666667}
\point{g}{16.666667}{33.333333}
\point{h}{16.666667}{50.000000}
\point{i}{33.333333}{0.000000}
\point{j}{33.333333}{16.666667}
\point{k}{33.333333}{33.333333}
\point{l}{33.333333}{50.000000}
\point{m}{50.000000}{0.000000}
\point{n}{50.000000}{16.666667}
\point{n2}{53.000000}{0.0}
\point{o}{50.000000}{33.333333}
\point{p}{50.000000}{50.000000}
\beam{2}{a}{e}
\beam{2}{b}{f}
\beam{2}{c}{g}
\beam{2}{d}{h}
\beam{2}{e}{i}
\beam{2}{f}{j}
\beam{2}{g}{k}
\beam{2}{h}{l}
\beam{2}{i}{m}
\beam{2}{j}{n}
\beam{2}{k}{o}
\beam{2}{l}{p}
\beam{2}{e}{f}
\beam{2}{f}{g}
\beam{2}{g}{h}
\beam{2}{i}{j}
\beam{2}{j}{k}
\beam{2}{k}{l}
\beam{2}{m}{n}
\beam{2}{n}{o}
\beam{2}{o}{p}
\beam{2}{a}{f}
\beam{2}{b}{g}
\beam{2}{c}{h}
\beam{2}{e}{j}
\beam{2}{f}{k}
\beam{2}{g}{l}
\beam{2}{i}{n}
\beam{2}{j}{o}
\beam{2}{k}{p}
\beam{2}{b}{e}
\beam{2}{c}{f}
\beam{2}{d}{g}
\beam{2}{f}{i}
\beam{2}{g}{j}
\beam{2}{h}{k}
\beam{2}{j}{m}
\beam{2}{k}{n}
\beam{2}{l}{o}
\support{3}{a}[270];
\support{3}{b}[270];
\support{3}{c}[270];
\support{3}{d}[270];
\load{1}{n2}[90][0.75][0.15];
\notation{1}{n2}{$100$~kN}[above right=2mm];
\dimensioning{1}{a}{e}{-0.8}[$\frac{50}{3}$~mm];
\dimensioning{1}{e}{i}{-0.8}[$\frac{50}{3}$~mm];
\dimensioning{1}{i}{m}{-0.8}[$\frac{50}{3}$~mm];
\dimensioning{2}{a}{b}{-0.6}[$\frac{50}{3}$~mm];
\dimensioning{2}{b}{c}{-0.6}[$\frac{50}{3}$~mm];
\dimensioning{2}{c}{d}{-0.6}[$\frac{50}{3}$~mm];
\notation{1}{a}{$\circled{a}$}[align=center];
\notation{1}{b}{$\circled{b}$}[align=center];
\notation{1}{c}{$\circled{c}$}[align=center];
\notation{1}{d}{$\circled{d}$}[align=center];
\notation{1}{e}{$\circled{e}$}[align=center];
\notation{1}{f}{$\circled{f}$}[align=center];
\notation{1}{g}{$\circled{g}$}[align=center];
\notation{1}{h}{$\circled{h}$}[align=center];
\notation{1}{i}{$\circled{i}$}[align=center];
\notation{1}{j}{$\circled{j}$}[align=center];
\notation{1}{k}{$\circled{k}$}[align=center];
\notation{1}{l}{$\circled{l}$}[align=center];
\notation{1}{m}{$\circled{m}$}[align=center];
\notation{1}{n}{$\circled{n}$}[align=center];
\notation{1}{o}{$\circled{o}$}[align=center];
\notation{1}{p}{$\circled{p}$}[align=center];
\end{tikzpicture}}
\hfill\subfloat[\label{fig:larger4opt}]{%
      \includegraphics[width=0.4\linewidth]{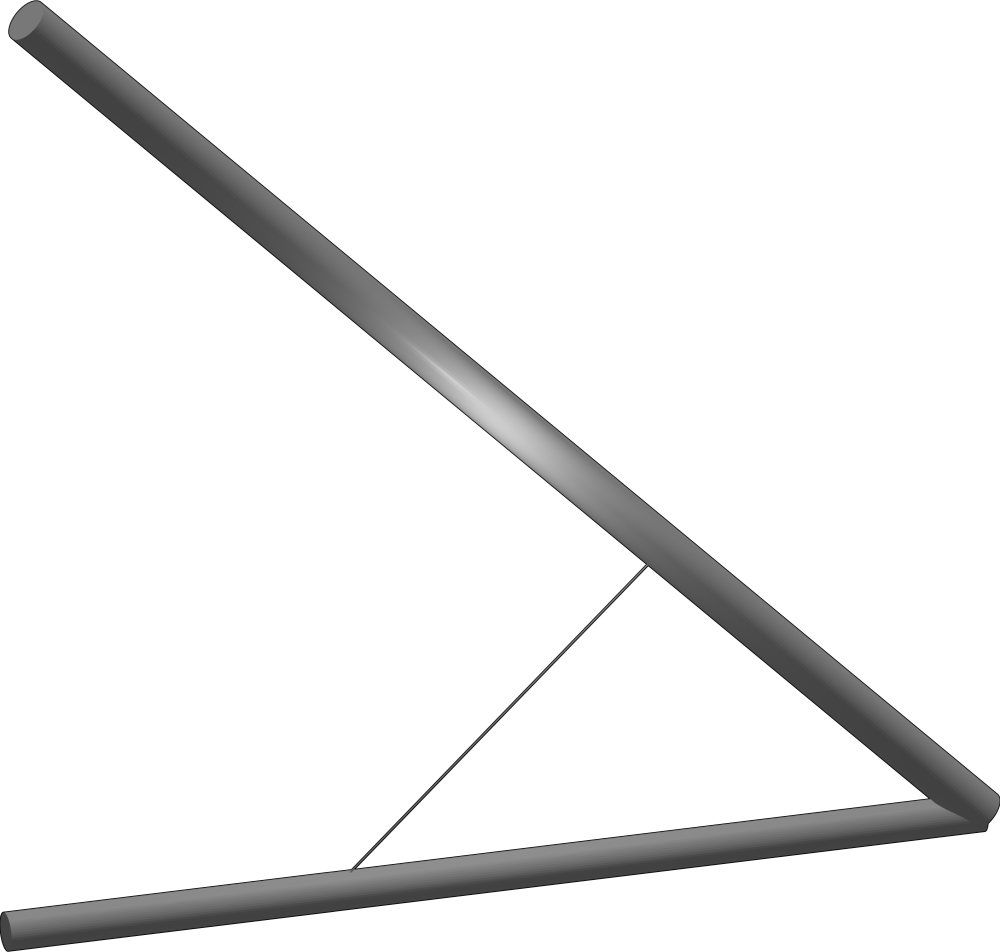}
    }
\caption{$39$-elements cantilever: (a) discretization and boundary conditions, and (b) optimal design.}
\end{figure}

\begin{table}[h!]
 \caption{Computational results using NMT basis for weight minimization of $39$-elements cantilever and the relaxation order $r=2$.}
  \begin{tabular}{l l c c c  c }
    \hline 
  \textbf{Method}    &$ \bm{(n_c ,s)} $& \textbf{nvar}&\textbf{l.b.} & \textbf{time} & $\bm{\varepsilon_}{rel}$\\
        \hline

         NMT mSOS  &  $ (39,40), (1,79), (1,1480)$ & $ 41158 $ &  $433.8 $   & $25815$ & $ 5\mathrm{e}{-05}  $\\ 
         NMT TSP $k=1$ & $ (1599,2),(39,74)$ & $3120$ &  $433.7 $   & $8$ & $ 7\mathrm{e}{-04}  $\\
   
       \hline 
    \end{tabular}

  \label{tab:39_weight_r2}
    \end{table}

\section{Conclusion and perspectives}
\label{sec:conclusion-perspective}

In this paper, we have applied the term-sparse moment SOS (TSP) technique introduced by Magron and Wang in \cite{magron2023sparse} to the problems of compliance and weight optimization of frame structures. We first generalized the TSP to handle polynomial matrix inequalities and then proposed a reduced monomial basis (NMT basis), which is adapted to the polynomial nature of the problems. This monomial basis was used to build the dense mSOS and for the TSP with minimal chordal extension. We showed through numerical experiments that for frame optimization problems the NMT basis reached a global minimum in computationally significantly shorter times than the dense mSOS, and even than the TSP with the standard monomial basis. Therefore, we solved, on the one hand, bigger instances of frame structures and, on the other hand, smaller problems that required high relaxation orders to converge. However, we do not offer theoretical guarantees for convergence to a global minimum of the mSOS using the NMT basis. This will be a direction to explore in future work. Further, the use of the term sparsity in frame structure problems has limitations when the stiffness matrix $\bm{K}$ becomes large. To solve this, it is necessary to exploit the sparsity that occurs in the entries of the stiffness matrix $\bm{K}$ and combine it with TSP. As shown in \cite{kovcvara2021decomposition}, the use of arrow matrix decomposition is especially useful for topology optimization problems. This method can be adapted in the case of polynomial matrix inequalities and will also be considered in future work. 

\appendix
\section{Proof of Proposition \ref{prop_maximal_extension}}
\label{proof-prop}
To provide a proof of Proposition \ref{prop_maximal_extension}, we need first to prove some preliminary results. We start by providing two definitions. 

\begin{definition}
    \label{definition_lengthmonomial}
Let $ \bm{x} \in \mathbb{R}^n$ and $\ell$ be an integer such that $1 \leq \ell \leq n$. The monomial $ \bm{x^\alpha}$ is of length $\ell$  if it can be written as $$ \bm{x^\alpha}=x_{\sigma_1}^{\alpha_{\sigma_1}}\dots x_{\sigma_\ell}^{\alpha_{\sigma_\ell}} \text{ where } \lbrace \sigma_1, \ldots, \sigma_\ell  \rbrace \subseteq \lbrace 1, \ldots, n \rbrace \text{ and } \alpha_{\sigma_i} \neq 0, \forall i \in \lbrace 1, \ldots, \ell\rbrace.$$
In particular if $\ell=1$, then $ \bm{x^\alpha}$ is an univariate monomial.

\end{definition}

\begin{definition}
    Let $\bm{\alpha} \in \mathbb{N}^n$. We call the sign type of $\bm{\alpha}$ the vector $\bm{v}=\bm{\alpha} \text{ mod }2= $ \\$(\alpha_1 \text{ mod } 2, \ldots, \alpha_n \text{ mod } 2  ) \in \lbrace 0,1 \rbrace^n$. In what follows, we denote the sign type of a monomial $\bm{x^\alpha}$ as the sign type of $\bm{\alpha}$. The sign type is of even degree if $\bm{v}=\bm{0}_{\mathbb{R}^n} $ and of odd sign type otherwise. 
\end{definition}
Let us note that for a given integer $r \geq 1$, every monomial $\bm{x^\alpha}\in \bm{b}_r(\bm{x}^2)$ is of the even sign type. Conversely, for a monomial $\bm{x^\alpha}$ of even sign type, if $\text{deg}(\bm{x^\alpha}) \leq 2r$, then $\bm{x^\alpha}\in \bm{b}_r(\bm{x}^2)$. 

Now, let $r$ be a fixed relaxation order and $\mathcal{S}^{(0)}=\mathcal{S}_c \cup \bm{b}_r(\bm{x}^2)$ where $\mathcal{S}_c $ is defined in \eqref{support_compliance_K3=0} or in \eqref{support_compliance_K3!=0}. Let $\mathcal{G}_{r,j}^{(k)}$ with $V(\mathcal{G}_{r,j}^{(k)})=\bm{b}_r(\bm{x})$ be the $k$-th graph obtained after performing TSP with maximal chordal extension. We recall that $\lbrace \bm{x^\alpha},\bm{x^\beta} \rbrace \in E(\mathcal{G}_{r,j}^{(k)})$ then $\bm{x}^{\bm{\alpha}+\bm{\beta}} \in \mathcal{S}^{(k)} $, and that at sparsity order $k=0$, we have  $ \mathcal{G}_{r,0}^{(0)}= \mathcal{G}^{\text{tsp}}$ and $ \mathcal{G}_{r,j}^{(0)}= \varnothing$ for $j \geq 1$. The following result allows us to determine the type of monomials that form a connected component in the TSP graph, according to their length and their sign type.

\begin{lemma}
   \label{lem_maximal_extension} 
Consider the problem \eqref{compliance_problem_sdp_scalled} and a fixed relaxation order $r \geq 2$. At the sparsity order $k=0$, there exists a connected component $\mathcal{C}_{r,0}^{(0)} \varsubsetneq \mathcal{G}_{r,0}^{(0)}=\mathcal{G}^{\text{tsp}}$ that contains \begin{itemize}
    \item  all the monomials of length $1$;  
    \item  monomials $\bm{x^\alpha}$ of length $\ell \geq 2$ and of even sign type;
    \item  monomials $\bm{x^\alpha}$ of length $\ell \geq 2$, of odd sign type, and for which there exists a univariate monomial $x_o^{\alpha_o}$ of degree $\alpha_o \geq  1$ such that $\bm{x^\alpha}x_o^{\alpha_o}$ is of even sign type.
\end{itemize} 
\end{lemma}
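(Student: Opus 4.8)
The plan is to read the edges of $\mathcal{G}^{\text{tsp}}=\mathcal{G}_{r,0}^{(0)}$ directly off the two pieces of the initial support $\mathcal{S}^{(0)}=\mathcal{S}_c\cup\bm{b}_r(\bm{x}^2)$ and then track connectivity to the constant node $1=\bm{x}^{\bm 0}$. By \eqref{TSP_graph}, two nodes $\bm{x^\alpha},\bm{x^\beta}\in\bm{b}_r(\bm{x})$ are adjacent iff $\bm{x}^{\bm\alpha+\bm\beta}\in\mathcal{S}^{(0)}$. The contribution of $\bm{b}_r(\bm{x}^2)$ yields an edge exactly when $\bm\alpha+\bm\beta$ has even sign type, i.e. when $\bm{x^\alpha}$ and $\bm{x^\beta}$ share the same sign type (the bound $\deg(\bm\alpha+\bm\beta)\le 2r$ is automatic since both nodes have degree $\le r$). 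The contribution of $\mathcal{S}_c$, recall \eqref{support_compliance_K3=0} and \eqref{support_compliance_K3!=0}, yields an edge only when $\bm{x}^{\bm\alpha+\bm\beta}$ is a univariate monomial of degree at most $d$; since exponents only add, this forces both $\bm{x^\alpha}$ and $\bm{x^\beta}$ to be powers of one common variable (or the constant). Hence the only edges that can change the sign type live among univariate monomials. I write $\mathcal{C}:=\mathcal{C}_{r,0}^{(0)}$ for the connected component of $1$.

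For the first item, every even power $x_i^{2j}$ of degree $\le r$ has the same sign type $\bm 0$ as $1$, so $\{1,x_i^{2j}\}$ is a $\bm{b}_r(\bm{x}^2)$-edge; every odd power $x_i^{2j+1}$ has sign type $\bm e_i$, the same as $x_i$, so it is adjacent to $x_i$, and $x_i$ is in turn adjacent to $1$ through $\mathcal{S}_c$ since $x_i\in\mathcal{S}_c$. Thus all univariate monomials lie in $\mathcal{C}$. For the second item, any $\bm{x^\alpha}$ of even sign type and degree $\le r$ satisfies $\bm{x^\alpha}\in\bm{b}_r(\bm{x}^2)$, so $\{1,\bm{x^\alpha}\}$ is an edge and $\bm{x^\alpha}\in\mathcal{C}$.

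For the third item I first note that the stated hypothesis, existence of $x_o^{\alpha_o}$ with $\alpha_o\ge 1$ and $\bm{x^\alpha}x_o^{\alpha_o}$ of even sign type, forces every exponent of $\bm{x^\alpha}$ other than that of $x_o$ to be even; as $\bm{x^\alpha}$ has odd sign type, $x_o$ carries its unique odd exponent. Then $\bm{x^\alpha}x_o$ has even sign type and degree $\le r+1\le 2r$, so it belongs to $\bm{b}_r(\bm{x}^2)$, giving the edge $\{\bm{x^\alpha},x_o\}$; since $x_o\in\mathcal{C}$ by the first item, $\bm{x^\alpha}\in\mathcal{C}$. Finally, to confirm $\mathcal{C}\varsubsetneq\mathcal{G}^{\text{tsp}}$ I exhibit a node outside it: because $n\ge 2$ and $r\ge 2$, the node $x_1x_2$ is present and its sign type has two nonzero entries. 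Such a node is never univariate, hence admits no $\mathcal{S}_c$-edge, and its $\bm{b}_r(\bm{x}^2)$-edges reach only nodes of the identical sign type; propagating this, its whole component consists of monomials with at least two odd exponents and so cannot contain $1$.

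The main obstacle is the structural bookkeeping that isolates which edges can alter the sign type: everything rests on the observation that $\mathcal{S}_c$-edges live entirely inside the univariate monomials, so that the constant node reaches only the sign types $\bm 0$ and $\bm e_i$ and never a sign type with two odd entries. Once this is pinned down, the three membership claims and the properness of $\mathcal{C}$ reduce to short parity and degree checks; the only point demanding care is verifying the equivalence between the third item's hypothesis and the property that $\bm{x^\alpha}$ has exactly one odd exponent.
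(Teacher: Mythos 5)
Your proof is correct and follows essentially the same skeleton as the paper's: read the edges of $\mathcal{G}^{\text{tsp}}$ off the two pieces of $\mathcal{S}^{(0)}$, link every univariate monomial to $1$ (even powers directly, odd powers through $x_i$), attach even-sign-type monomials to $1$ via $\bm{b}_r(\bm{x}^2)$, and attach the third class to a univariate node via an even-sign-type product. One step of yours is genuinely sharper: for strictness, the paper asserts that $x_1x_2$ has \emph{no} neighbor $\bm{x^\beta}\neq x_1x_2$ at all, which is literally false once $r\geq 4$ (e.g.\ $x_1x_2\cdot x_1x_2^3 = x_1^2x_2^4\in\bm{b}_r(\bm{x}^2)$, so $\{x_1x_2,\,x_1x_2^3\}$ is an edge), whereas you instead show that the entire connected component of $x_1x_2$ is confined to monomials whose sign type has at least two odd entries --- since $\bm{b}_r(\bm{x}^2)$-edges preserve sign type and $\mathcal{S}_c$-edges exist only between powers of a single common variable --- so that component can never reach $1$; this argument is valid for all $r\geq 2$ and repairs the paper's slight inaccuracy without changing its conclusion. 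Your explicit reduction of the third bullet's hypothesis to ``$\bm{x^\alpha}$ has exactly one odd exponent, carried by $x_o$,'' followed by using the degree-one node $x_o$ rather than $x_o^{\alpha_o}$, is also a small but welcome tightening, as it avoids the paper's implicit assumption that $x_o^{\alpha_o}$ itself is a vertex of the graph (i.e.\ $\alpha_o\leq r$).
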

Lemma \ref{lem_maximal_extension} is illustrated in Figure \ref{example_lemma_4}.

\begin{figure}[!h]

\centering \begin{tikzpicture}[node distance={12mm},rotate=-20, thick, main/.style = {draw, circle, radius=0.1mm}, auto]
\node[main]  at (360/16* 1:2.4cm ) (1) {\adjustbox{max width=3mm}{$1$}}; 
\node[main] at (360/16* 2:3cm ) (2) {\adjustbox{max width=3mm}{ $x_1$}}; 
\node[main] at (360/16* 3:3.2cm ) (3) {\adjustbox{max width=3mm}{ $x_2$}}; 
\node[main] at (360/16* 4:3cm ) (4) {\adjustbox{max width=3mm}{$x_3$}}; 
\node[main] at (360/16* 5:2.4cm ) (5) {\adjustbox{max width=2.5mm}{$x_3^2$}}; 
\node[main] at (360/16* 6:2.6cm )(6) {\adjustbox{max width=2.5mm}{$x_2^2$}}; 
\node[main] at (360/16* 7:2.4cm )(7) {\adjustbox{max width=2.5mm}{$x_1^2$}}; 

\node[main] at (360/16* 14:3cm ) (8) {\adjustbox{max width=2.5mm}{$x_3^3$}}; 
\node[main] at (360/16* 15:3.2cm )(9) {\adjustbox{max width=2.5mm}{$x_2^3$}}; 
\node[main] at (360/16* 16:3cm )(10) {\adjustbox{max width=2.5mm}{$x_1^3$}}; 

\node[main] at (360/16* 8:2.4cm )(11) {\adjustbox{max width=4mm}{$x_1^2x_2$}}; 
\node[main] at (360/16* 9:2.4cm )(12) {\adjustbox{max width=4mm}{$x_1^2x_3$}}; 
\node[main]  at (360/16* 10:2.4cm ) (13) {\adjustbox{max width=4mm}{$x_2^2x_3$}}; 
\node[main] at (360/16* 11:2.4cm ) (14) {\adjustbox{max width=4mm}{$x_1x_2^2$}}; 
\node[main] at (360/16* 12:2.4cm )(15) {\adjustbox{max width=4mm}{$x_1x_3^2$}}; 
\node[main] at (360/16* 13:2.6cm )(16) {\adjustbox{max width=4mm}{$x_2x_3^2$}};

 \node[main]  [  right of=10] (17) {\adjustbox{max width=4mm}{$x_1x_3$}}; 
\node[main] [  above of=17] (18) {\adjustbox{max width=4mm}{$x_1x_2$}}; 
\node[main] [  above of=18]  (19) {\adjustbox{max width=4mm}{$x_2x_3$}}; 
 \node[main]  [  below of=17] (20) {\adjustbox{max width=4mm}{$x_1x_2x_3$}}; 

\draw (1) -- (2);
\draw (1) -- (3);
\draw (1) -- (4);
\draw (1) -- (5);
\draw (1) -- (6);
\draw (1) -- (7);
\draw (1) -- (8);
\draw (1) -- (9);
\draw (1) -- (10);

\draw (11) -- (3);
\draw (12) -- (4);
\draw (13) -- (4);
\draw (14) -- (2);
\draw (15) -- (2);
\draw (16) -- (3);
\draw (5) -- (6);
\draw (5) -- (7);
\draw (6) -- (7);
\draw (8) -- (9);
\draw (8) -- (10);
\draw (9) -- (10);

\draw (11) -- (9);
\draw (12) -- (8);
\draw (13) -- (8);
\draw (14) -- (10);
\draw (15) -- (10);
\draw (16) -- (9);
\end{tikzpicture}  

\caption{The TSP graph at $k=0$ of the illustrative problem \eqref{POP-example} with a relaxation degree $r=3$}
 \label{example_lemma_4}
\end{figure}
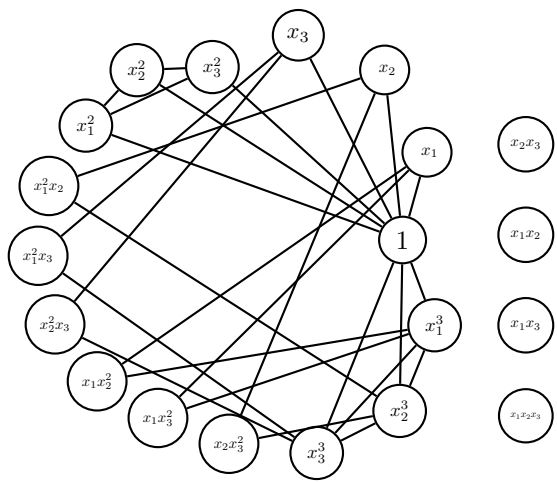

\begin{proof}
 For all $ i \in \lbrace 1, \ldots, n \rbrace$, we have $\lbrace 1,x_i\rbrace \in E(\mathcal{G}_{r,0}^{(0)})$ and $\lbrace 1,x_i^2\rbrace \in E(\mathcal{G}_{r,0}^{(0)})$. Now, in the case of a degree $ \alpha_i \geq 3 $, we have two possibilities:
    \begin{itemize}
        \item if $\alpha_i$ is even, then $\lbrace 1,x_i^{\alpha_i} \rbrace \in E(\mathcal{G}_{r,0}^{(0)})$;
        \item if $\alpha_i$ is odd, then $\lbrace x_i,x_i^{\alpha_i} \rbrace \in E(\mathcal{G}_{r,0}^{(0)})$.
    \end{itemize}
 Therefore, there exists a connected component $\mathcal{C}_{r,0}^{(0)} \subseteq \mathcal{G}_{r,0}^{(0)}$ such that $x_i^{\alpha_i} \in V(\mathcal{C}_{r,0}^{(0)}) $ for all $i \in \lbrace 1, \ldots, n \rbrace$ and $1 \leq \alpha_i \leq r$, i.e., $ \mathcal{C}_{r,0}^{(0)}$ contains all univariate monomials regardless of their degree. If $\bm{x^\alpha}$ is of length $\ell \geq 2$ and of even sign type, then it belongs to $\mathcal{C}_{r,0}^{(0)}$ since $\bm{x^\alpha} \in \bm{b}_r(\bm{x})$. Now, if $\bm{x^\alpha}$ is of length $\ell \geq 2$, of odd sign type and where $\bm{x^\alpha}x_o^{\alpha_o}$ is of the even sign type for some univariate monomial $x_o^{\alpha_o}$, then it belongs to $\mathcal{C}_{r,0}^{(0)}$ since $\bm{x^\alpha}x_o^{\alpha_o} \in \bm{b}_r(\bm{x}^2)$. The inclusion $\mathcal{C}_{r,0}^{(0)} \varsubsetneq \mathcal{G}_{r,0}^{(0)}$ is strict since the graph $\mathcal{G}_{r,0}^{(0)}$ is not connected. Indeed, consider for example the monomial $x_1x_2 \in \bm{b}_r(\bm{x})$, there exists no monomial $x^\beta \neq x_1x_2 \in V(\mathcal{G}_{r,0}^{(0)})=\bm{b}_r(\bm{x})$ such that $x_1x_2x^\beta \in \mathcal{S}^{(0)}$. 
\end{proof} 

According to Lemma \ref{lem_maximal_extension}, every monomial $\bm{x}^{\bm{\alpha}}= x_{\sigma_1}^{\alpha_{\sigma_1}}\ldots x_{\sigma_\ell}^{\alpha_{\sigma_\ell}}\in V(\mathcal{G}_{r,0}^{(0)}) \setminus  V(\mathcal{C}_{r,0}^{(0)})$ is of length $\ell \geq 2$, of odd sign type and has at least two odd exponents, i.e., $\exists \sigma_{o_1}, \sigma_{o_2} \in \lbrace \sigma_1, \ldots, \sigma_\ell \rbrace$  such that $\alpha_{\sigma_{o_1}}$ and $\alpha_{\sigma_{o_2}}$ are odd. For example, in Figure \ref{example_lemma_4}, it corresponds to the monomials $ x_1x_2, x_1x_3, x_2x_3$ and $x_1x_2x_3$. 

The next result is dedicated to monomials of this form and allows us to construct two other monomials that have specific properties. We further show how are these monomials linked to the connected component $\mathcal{C}_{r,0}^{(k)}$ after the extension of the support in the sparsity order $k=2$. This is crucial for deriving the proof of Proposition \ref{prop_maximal_extension}.

\begin{lemma}
\label{weard_lemma}
Let $\ell$ be an integer such that $ 2 \leq \ell \leq n$ and $\bm{x^\alpha}=x_{\sigma_1}^{\alpha_{\sigma_1}}\ldots x_{\sigma_\ell}^{\alpha_{\sigma_\ell}}$ be a monomial of length $\ell \geq 2$, of degree $d$ and of odd sign type. Suppose that $\exists \sigma_{o_1}, \sigma_{o_2} \in \lbrace \sigma_1,\ldots, \sigma_\ell \rbrace$ such that $ \alpha_{\sigma_{o_1}}$ and $ \alpha_{\sigma_{o_2}}$ are odd. Then there exists an integer $s$ satisfying
\begin{equation}
\label{weird_inequality}
    \max\Big\lbrace\ell-(\alpha_{\sigma_1}+\ldots +\alpha_{\sigma_s})-1,1\Big\rbrace \leq s \leq \min\Big\lbrace\alpha_{\sigma_{s+1}}+\ldots+\alpha_{\sigma_\ell},\ell-1\Big\rbrace,
\end{equation}
and two monomials $\bm{x^a}$ and $\bm{x^{a'}}$ such that
\begin{itemize}
    \item $\bm{x^a}$ is of length $s $, of degree $\text{deg}(\bm{x^a})\leq d$ and the monomial $\bm{x^a}x_{\sigma_{o_1}}$ is of even sign type; 
    \item $\bm{x^{a'}}$ is of length $\ell -s  $, of degree $\text{deg}(\bm{x^{a'}}) \leq d$ and the monomial $\bm{x^{a'}}x_{\sigma_{o_2}}$ is of even sign type.
\end{itemize}
\end{lemma}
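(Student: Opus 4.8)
The plan is to produce the partition and the two monomials by an explicit, minimal construction, and to locate the index $s$ by a one-dimensional monotonicity (discrete intermediate-value) argument. Since the enumeration $\sigma_1,\dots,\sigma_\ell$ of the variables occurring in $\bm{x^\alpha}$ is ours to choose, I would first relabel so that $\sigma_1=\sigma_{o_1}$ and $\sigma_\ell=\sigma_{o_2}$. This guarantees that for \emph{every} $s\in\{1,\dots,\ell-1\}$ the variable $x_{\sigma_{o_1}}$ sits among the first $s$ indices and $x_{\sigma_{o_2}}$ among the last $\ell-s$, which is exactly what the two bullet requirements will need.

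The core observation I would record first is that, because $\bm{x^\alpha}$ has length $\ell$ (so all $\ell$ exponents $\alpha_{\sigma_i}$ are positive), the degree satisfies $d=\sum_{i=1}^\ell \alpha_{\sigma_i}\ge \ell$, and moreover every single exponent obeys $\alpha_{\sigma_i}\le d-(\ell-1)=d-\ell+1$, since the remaining $\ell-1$ exponents contribute at least $\ell-1$ to $d$. Introduce the partial sums $L(s)=\alpha_{\sigma_1}+\dots+\alpha_{\sigma_s}$ and $R(s)=\alpha_{\sigma_{s+1}}+\dots+\alpha_{\sigma_\ell}=d-L(s)$, and set $f(s)=L(s)+s$. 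A short rearrangement shows that \eqref{weird_inequality} is equivalent to $\ell-1\le f(s)\le d$ together with $1\le s\le\ell-1$: the upper bound $s\le R(s)$ reads $f(s)\le d$, and the lower bound $\ell-L(s)-1\le s$ reads $f(s)\ge \ell-1$.

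Then I would choose $s$ to be the smallest index with $f(s)\ge \ell-1$. Since $f$ is strictly increasing with steps $f(s)-f(s-1)=\alpha_{\sigma_s}+1\ge 2$, and since $f(\ell-1)=d+\ell-1-\alpha_{\sigma_\ell}\ge \ell-1$ (using $\alpha_{\sigma_\ell}\le d$), such an $s$ exists and lies in $\{1,\dots,\ell-1\}$. The essential point is that this same $s$ automatically meets the upper bound: by minimality $f(s-1)\le \ell-2$, whence $f(s)=f(s-1)+\alpha_{\sigma_s}+1\le(\ell-2)+(d-\ell+1)+1=d$, using precisely the bound $\alpha_{\sigma_s}\le d-\ell+1$. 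This step --- that the least index clearing the lower threshold cannot overshoot $d$ --- is the crux of the argument, and I expect it to be the main obstacle; it rests entirely on the elementary fact that no single exponent can exceed $d-\ell+1$.

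Finally, with this $s$ fixed I would build the two monomials minimally, putting the designated variable to the first power and every other chosen variable to the square: take $\bm{x^a}=x_{\sigma_{o_1}}\prod_{i=2}^{s}x_{\sigma_i}^{2}$ and $\bm{x^{a'}}=x_{\sigma_{o_2}}\prod_{i=s+1}^{\ell-1}x_{\sigma_i}^{2}$. By construction $\bm{x^a}$ has length $s$ and carries an odd exponent only at $\sigma_{o_1}$, so $\bm{x^a}x_{\sigma_{o_1}}$ is of even sign type; similarly $\bm{x^{a'}}$ has length $\ell-s$ and $\bm{x^{a'}}x_{\sigma_{o_2}}$ is of even sign type. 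Their degrees are $2s-1$ and $2(\ell-s)-1$; from $f(s)\le d$ with $L(s)\ge s$ one gets $2s\le d$, and from $f(s)\ge\ell-1$ with $R(s)\ge\ell-s$ one gets $2(\ell-s)\le d+1$, so both degrees are at most $d$. The only remaining care is the boundary cases $s=1$ and $s=\ell-1$, where one of the products is empty, and the smallest instance $\ell=2$, all of which are immediate.
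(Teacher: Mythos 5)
Your proof is correct, and it takes a genuinely different route from the paper's on both halves of the statement. For the existence of $s$, the paper sets $\underline{b}(s)=\max\{\ell-L(s)-1,\,1\}$ and $\overline{b}(s)=\min\{R(s),\,\ell-1\}$, where $L(s)=\alpha_{\sigma_1}+\cdots+\alpha_{\sigma_s}$ and $R(s)=d-L(s)$, and only verifies $\overline{b}(s)-\underline{b}(s)\ge 0$ case by case; since both endpoints depend on $s$, nonemptiness of each interval does not by itself produce an $s$ lying in its \emph{own} interval, so your reformulation of the self-referential condition \eqref{weird_inequality} as $\ell-1\le f(s)\le d$ with $f(s)=L(s)+s$ strictly increasing, followed by taking the minimal $s$ clearing the lower threshold and using the crux bound $\alpha_{\sigma_s}\le d-\ell+1$ to rule out overshooting, is in fact tighter than the published argument (only state explicitly the convention $f(0)=0$ so that the minimality step covers $s=1$; it works since $0\le\ell-2$). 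For the construction itself, the paper reorders so that $\sigma_{o_1},\sigma_{o_2}$ occupy the adjacent positions $s$ and $s+1$ and keeps the original exponents, rounding each odd non-designated exponent up by one as in \eqref{def-ai}, whereas you place the designated indices at positions $1$ and $\ell$ and take minimal representatives with exponents in $\{1,2\}$; both reorderings are legitimate since the enumeration is a free choice, and your variant yields the degree bounds $2s-1\le d$ and $2(\ell-s)-1\le d$ essentially for free. What the paper's heavier construction buys is the componentwise inequality $a_i\ge\alpha_i$: the proof of Proposition \ref{prop_maximal_extension} later invokes the \emph{specific} monomials of \eqref{weird-construction} and forms $\bm{x}^{\bm{\beta}}$ with exponents $\beta_i=a_i-\alpha_i\ge 0$, which your minimal monomials would not support (e.g., you would have $a_i=2$ against $\alpha_i=4$). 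Nothing in the lemma's statement requires this divisibility, so your proof of the statement as written is complete; it just could not be substituted into the paper's proof of the proposition without strengthening the lemma's conclusion accordingly.
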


\begin{proof}

 First, let us prove that we can always find an integer $s$ that satisfies inequality \eqref{weird_inequality}. Let $\underline{b}=\max\Big\lbrace\ell-(\alpha_{\sigma_{1}}+\ldots +\alpha_{\sigma_{s}})-1,1\Big\rbrace $ and $\overline{b}=\min\Big\lbrace\alpha_{\sigma_{s+1}}+\ldots+\alpha_{\sigma_{\ell}},\ell-1\Big\rbrace$. It suffices to prove that $\overline{b}-\underline{b} \geq 0$.
    \begin{itemize}
    
        \item If $\underline{b}=\ell-(\alpha_{\sigma_{1}}+\ldots+\alpha_{\sigma_{s}})-1,$
        \begin{itemize}
            \item if $ \overline{b}=\ell-1$, then $\overline{b}-\underline{b}=\alpha_{\sigma_{1}}+\ldots+\alpha_{\sigma_{s}} \geq 0$;
            \item if $ \overline{b}=\alpha_{\sigma_{s+1}}+\ldots+\alpha_{\sigma_{\ell}}$, then $\overline{b}-\underline{b}=d-\ell +1 \geq 0$, (it is straightforward that for any monomial of length $\ell$ and of degree $d$, we have $\ell \leq d$).

        \end{itemize}
        \item If $\underline{b}=1$, 
        \begin{itemize}
            \item if $\overline{b}=\ell-1$ then $\overline{b}-\underline{b}=\ell -2 \geq 0$, since we work with monomials of length $\ell \geq 2$;
            \item if $\overline{b}=\alpha_{\sigma_{s+1}}+\ldots+\alpha_{\sigma_{\ell}}$, then $\overline{b}-\underline{b}=\alpha_{\sigma_{s+1}}+\ldots+\alpha_{\sigma_{\ell}}-1 \geq 0$.
        \end{itemize}
    \end{itemize}

Furthermore, using the integer $s$, we construct two monomials $\bm{x^a}$ and $\bm{x^{a'}}$ from $\bm{x^\alpha} $ satisfying the claims of Lemma \ref{weard_lemma}. We start by reordering the exponent index set $\lbrace \alpha_1, \ldots, \alpha_\ell \rbrace$ so that $ \alpha_{\sigma_{o_{1}}}=\alpha_{\sigma_{s}}$ and $ \alpha_{\sigma_{o_{2}}}=\alpha_{\sigma_{s+1}}$, i.e., we consider the ordering \\ $\lbrace \sigma_1,\ldots,\sigma_{s-1},\sigma_{o_1},\sigma_{o_2},\sigma_{s+2},\ldots, \sigma_{\ell} \rbrace$\footnote{It is always possible since the letters of any given monomial are commutative. For example, the monomial $x_2x_3^2x_4^2x_5$ with $s=2$ is reordered as $ x_3^2x_2x_5x_4^2$ }. We define the two monomials $\bm{x^a}$ and $\bm{x^{a'}}$ from $\bm{x^\alpha} $ as 
 \begin{equation}
    \left\lbrace\begin{aligned}
    \bm{x^a}&=x_{\sigma_1}^{a_{\sigma_1}}\ldots x_{\sigma_{s-1}}^{a_{\sigma_{s-1}}}x_{\sigma_{o_1}}^{\alpha_{\sigma_{o_1}}}, \\
    \bm{x^{a'}}&=x_{\sigma_{o_2}}^{\alpha_{\sigma_{o_2}}}x_{\sigma_{s+2}}^{a_{\sigma_{s+2}}}\ldots x_{\sigma_{\ell}}^{a_{\sigma_{\ell}}},
    \end{aligned}\right.
    \label{weird-construction}
\end{equation} 
where  $  \forall i \in \lbrace \sigma_1,\ldots, \sigma_{s-1} \rbrace \cup \lbrace \sigma_{s+2}, \ldots, \sigma_{\ell} \rbrace,$
\begin{equation}
    a_{i}=\left\lbrace \begin{aligned}
        & \alpha_{i}+1, \text{ if } \alpha_{i} \text{ is odd}; \\
        &\alpha_{i}, \text{ if } \alpha_{i} \text{ is even}.
    \end{aligned}\right.
    \label{def-ai}
\end{equation}
With this construction, we have 
    \begin{align*}
            \text{deg}(\bm{x^a})&=\sum_{i=1}^{s-1}a_{\sigma_{i}}+\alpha_{\sigma_s}\\
            &\leq \sum_{i=1}^{s-1}\alpha_{\sigma_{i}}+s-1+\alpha_{\sigma_s} \\
            &\leq \sum_{i=1}^{s}\alpha_{\sigma_{i}}+ \sum_{i=s+1}^{\ell}\alpha_{\sigma_{i}}~\text{using the right part of the condition \eqref{weird_inequality}} \\
            &=d.
    \end{align*}
Analogously, we prove that  $\text{deg}(\bm{x^{a'}}) \leq d$.

 Finally, by construction, the monomials $\bm{x^a}x_{\sigma_{o_1}}$ and $\bm{x^{a'}}x_{\sigma_{o_2}}$ are of even sign type. 
\end{proof}
\begin{example}
\begin{enumerate}
    \item  Let $\bm{x} \in \mathbb{R}^6$ and consider the monomial $\bm{x^\alpha}=x_1x_2^2x_4x_5^3x_6$. We have $n=6$, $\ell=5$ and $d=8$. For $s=2$, we can choose $\bm{x^a}=x_2^2x_1$ and $\bm{x^{a'}}=x_5^4x_6^2x_4$. 
    \item Let $\bm{x} \in \mathbb{R}^3$ and consider the monomial $x_1x_2x_3$ in Figure \ref{example_lemma_4}. We have $\ell=3 $ and for $s=2$, we can choose $\bm{x^a}=x_1^2x_2$ and $\bm{x^{a'}}=x_3$.
\end{enumerate}
   \label{example-lemma2-monomials-length}
\end{example}

\begin{proof}\textit{of Proposition \ref{prop_maximal_extension}   }
It suffices to prove the result for $j=0$. Let $\mathcal{S}^{(k)}$ be the extended support defined in \eqref{extended_support} at the sparsity order $k$. We recall that the graph $\mathcal{F}^{(k)}_{r,j} \subseteq \mathcal{G}^{(k)}_{r,j}$ is obtained from $\mathcal{G}^{(k-1)}_{r,j}$ by adding edges after the support extension operation, and that if $\bm{x}^{\bm{\alpha}+\bm{\beta}} \in \mathcal{S}^{(k)}$ then $ \lbrace \bm{x}^{\bm{\alpha}},  \bm{x}^{\bm{\beta}} \rbrace \in \mathcal{F}_{r,j}^{(k+1)}$.

For $k=0$, according to Lemma \ref{lem_maximal_extension}, there exists a connected component $ \mathcal{C}_{r,0}^{(0)} \varsubsetneq \mathcal{G}_{r,0}^{(0)} $ containing all univariate monomials $x_i^{\alpha_i}$, monomials of length $\ell \geq 2$ of even sign type, and monomials $\bm{x^\alpha}$ of length $\ell \geq 2$ such that $\bm{x^\alpha}x_o$ is of odd degree for some univariate monomial $x_o$. 

At the sparsity order $k=1$ and after the maximal chordal extension operation, the connected component $ \mathcal{C}_{r,0}^{(0)}$ becomes a complete subgraph $ \mathcal{C}_{r,0}^{(1)} \varsubsetneq \mathcal{G}_{r,0}^{(1)}$ 
such that for all $i, k \in \lbrace 1, \ldots, n \rbrace$ and for any degrees $1 \leq \alpha_i, \alpha_k \leq r$, we have $\lbrace x_i^{\alpha_i}, x_k^{\alpha_k}\rbrace \in E(\mathcal{C}_{r,0}^{(1)})$ so that $ x_i^{\alpha_i}x_k^{\alpha_k} \in \mathcal{S}^{(1)}$, i.e., $\mathcal{S}^{(1)} $ contains all monomials of length $2$.

Now at sparsity order $k=2$ and right after the support extension operation, we consider the graph $\mathcal{F}_{r,0}^{(2)}$ and let $\mathcal{C}_{r,0}^{(2)} \subseteq \mathcal{F}_{r,0}^{(2)} $ be the largest connected component in $\mathcal{F}_{r,0}^{(2)}$. Further, we show that $\mathcal{F}_{r,0}^{(2)}$ is connected, i.e.  $\mathcal{F}_{r,0}^{(2)}=\mathcal{C}_{r,0}^{(2)}$. It suffice to show that for any monomial $\bm{x}^{\bm{\alpha}} \in V(\mathcal{F}_{r,0}^{(2)})$ of the form described in Lemma \ref{weard_lemma}, there exists a monomial $\bm{x^\beta} \in V(\mathcal{C}_{r,0}^{(2)})$ such that 
$\lbrace \bm{x^\alpha}, \bm{x^\beta}\rbrace \in E(\mathcal{F}_{r,0}^{(2)})$, so that $\bm{x}^{\bm{\alpha}} \in V(\mathcal{C}_{r,0}^{(2)})$. This claim is true for all monomials of length $\ell = 2$ since $\mathcal{S}^{(1)}$ contains all monomials of length $2$. By induction on monomials length, suppose that it is true for all the monomials of length at most $\ell$ and let $ \bm{x^\alpha} \in V(\mathcal{F}_{r,0}^{(2)}) $ be a monomial of length at most $\ell+1$. Using Lemma \ref{weard_lemma}, there exists an integer $s$ satisfying \eqref{weird_inequality} and two monomials $ \bm{x^a}$ and $ \bm{x^{a'}}$ defined by \eqref{weird-construction}, of lengths $s \leq \ell$ and $\ell-s \leq \ell$ respectively, and such that $ \bm{x^a}x_{s}$ and $ \bm{x^{a'}}x_{s+1}$ are of even sign type, for some univariate monomials $ x_{s}$ and $ x_{s+1}$. According to Lemma \ref{lem_maximal_extension}, we have $ \bm{x^a}, \bm{x^{a'}} \in V(\mathcal{C}_{r,0}^{(0)}) $ and, therefore, $\bm{x}^{\bm{a}+\bm{a}'} \in \mathcal{S}^{(1)}$. Furthermore, we can construct the monomial $\bm{x}^{\bm{\beta}}$ from $\bm{x}^{\bm{\alpha}}$, $\bm{x}^{\bm{a}}$ and $\bm{x}^{\bm{a'}}$ by  \begin{equation*}
    x_i^{\beta_i}=\left\lbrace \begin{aligned}
  &  x_i^{a_i-\alpha_i} \text{ for } i \in \lbrace \sigma_1,\ldots, \sigma_s \rbrace, \\
&    x_i^{a_i'-\alpha_i} \text{ for } i \in \lbrace \sigma_{s+1},\ldots, \sigma_\ell\rbrace.
\end{aligned} \right.
\end{equation*}  
Since that $a_{\sigma_{s}}=\alpha_{\sigma_{s}}$ and $a'_{\sigma_{s+1}}=\alpha_{\sigma_{s+1}}$ by the definition of $\bm{a}$ and $\bm{a'}$ in \eqref{def-ai}, the monomial $\bm{x^\beta}$ is of length at most $\ell-1$. By the induction hypothesis, we have $\bm{x^\beta} \in V(\mathcal{C}_{r,0}^{(2)})$. Therefore, we have $ \bm{x}^{\bm{\alpha}+\bm{\beta}}=\bm{x}^{\bm{a}+\bm{a}'} \in \mathcal{S}^{(1)}$ so that $\lbrace \bm{x^\alpha}, \bm{x^\beta}\rbrace \in E(\mathcal{F}_{r,0}^{(2)})$. Finally, after the maximal chordal extension, the graph $\mathcal{G}_{r,0}^{(2)}=\overline{\mathcal{F}_{r,0}^{(2)}}$ is complete since $\mathcal{F}_{r,0}^{(2)}$ is connected.  

\end{proof}

\noindent\textbf{Acknowledgements}
We would like to thank Prof. Ing. Jan Zeman for helpful suggestions on the initial versions of this manuscript.
\textbf{Funding} The authors acknowledge the support of the Czech Science foundation through project No. 22-15524S and co-funded by the European Union under the project ROBOPROX -- Robotics and Advanced Industrial Production (reg. no. CZ.02.01.01/00/22\_008/0004590).\\ \\
\textbf{Authors contributions}
Conceptualization: M.H.,  M.T.; Investigation: M.H.,  M.T.;  Methodology: M.H., M.T.;  Software: M.H., M.T.;  Writing (original draft preparation): M.H.; Writing (review and editing): M.T.,  M.K.;  Funding acquisition: M.K.; Supervision: M.K. All authors read and approved the final manuscript.  \\ \\
\textbf{Availability of data and materials} Source codes and input files to reproduce the computations are available at https://gitlab.com/tyburec/pof-dyna.
\section*{Declarations}
\textbf{Ethics approval and consent to participate} This work does not contain any studies with human participants or animals performed by any of the authors.\\ \\
\textbf{Consent for publication} Not applicable. \\ \\
\textbf{Competing interests} The authors declare that they have no competing interests.

\printbibliography
\end{document}